\documentclass[12pt]{article}

\usepackage{amsmath,amsthm,amsfonts,amssymb,mathrsfs,dsfont,bbm}
\usepackage[usenames]{color}

\usepackage{amsthm,amssymb,amsmath,amsbsy}
\usepackage{color}

\definecolor{labelkey}{rgb}{0.0, 0.8, 0.3}

\usepackage{hyperref}
\hypersetup{
  colorlinks = true,
  urlcolor = blue,
  linkcolor = blue,
  citecolor = blue}

\newcommand{\cD}{\mathcal{D}}

\newcommand{\cG}{\mathcal{G}}

\newcommand{\cM}{\mathcal{M}}

\newcommand{\cS}{\mathcal{S}}
\newcommand{\cT}{\mathcal{T}}

\newcommand{\cZ}{\mathcal{Z}}

\newcommand{\bt}{\overline{\theta}}
\newcommand{\bp}{\overline{\varphi}}
\newcommand{\tlt}{\tilde{\theta}}
\newcommand{\tlp}{\tilde{\varphi}}
\newcommand{\ind}{\mathbbm{1}}

\newcommand\numberthis{\addtocounter{equation}{1}\tag{\theequation}}

\numberwithin{equation}{section}

\definecolor{ggreen}{rgb}{0.0, 0.5, 0.3}
\definecolor{rred}{rgb}{0.65, 0.2, 0.2}
\definecolor{bblue}{rgb}{0.0, 0.0, 1}

\newtheorem{thm}{Theorem}
\newtheorem*{thm*}{Theorem}
\newtheorem{rem}[thm]{Remark}
\newtheorem{defi}[thm]{Definition}
\newtheorem{lem}[thm]{Lemma}
\newtheorem{cor}[thm]{Corollary}
\newtheorem{prop}[thm]{Proposition}


\newcommand{\R}{\mathbb R}
\newcommand{\E}{\mathbb E}
\newcommand{\Z}{\mathbb Z}
\newcommand{\C}{\mathbb C}

\newcommand{\eps}{\varepsilon}

\renewcommand{\phi}{\varphi}
\newcommand{\ud}{\mathrm{d}}
\renewcommand{\t}{\theta}
\newcommand{\symm}{\mathrm{symm}}

\DeclareMathOperator{\supp}{supp}
\DeclareMathOperator{\psupp}{psupp}

\definecolor{pink}{cmyk}{0, 1, 0, 0}

\def\ct{\check{\theta}}
\def\ch{\check{h}}
\def\hht{\hat{\theta}}
\def\hh{\hat{h}}
\def\hf{\hat{f}}

\def\fS{\mathfrak{S}}

\def\fb{\Gamma}

\def\a{\alpha}
\def\b{\beta}

\def\s{\sigma}
\def\k{\kappa}

\def\arg{\text{Arg}}

\def\ol{\overline}

\def\c{\complement}
\def\lg{\langle}
\def\rg{\rangle}

\def\L{\Lambda}

\def\var{\mathrm{Var}}

\def\bG{\mathcal{G}}

\def\cC{\mathcal{V}}
\def\Del{\Delta}

\def\ol{\overline}
\def\P{\mathbb{P}}
\def\o{\omega}

\def\vp{\varphi}
\def\vr{\varrho}
\def\vs{\zeta}

\def\fF{\mathfrak{F}}

\def\fm{\mathfrak{m}}
\def\fa{\mathfrak{a}}

\makeatletter
\renewcommand*{\@cite@ofmt}{\hbox}
\makeatother

\setlength{\topmargin}{-0.5in}

\begin{document}

\title{\Large  Sparse Multi-Reference Alignment : Phase Retrieval, \\ Uniform  Uncertainty Principles and the Beltway Problem}
\author{
			{Subhroshekhar Ghosh} \thanks{Dept. of Mathematics, National University of Singapore, \texttt{subhrowork@gmail.com}}
		\and
			{Philippe Rigollet} \thanks{Dept. of Mathematics, MIT, \texttt{rigollet@math.mit.edu}}
	}
\date{}
\maketitle

\begin{abstract}
{\small 
Motivated by cutting-edge applications like cryo-electron microscopy (cryo-EM), the Multi-Reference Alignment (MRA)  model  entails the learning of an unknown signal from repeated measurements of its images under the latent action of a group of isometries and additive noise of magnitude $\sigma$. Despite significant interest, a clear picture for understanding rates of estimation in this model has emerged only recently, particularly in the high-noise regime $\sigma \gg 1$ that is highly relevant in applications. Recent investigations have revealed a remarkable asymptotic sample complexity of order $\sigma^6$  for certain signals whose Fourier transforms have full support, in stark contrast to the traditional $\sigma^2$ that arise in regular models. Often prohibitively large in practice, these results have prompted the investigation of variations around the MRA model where better sample complexity may be achieved. In this paper, we show that \emph{sparse} signals exhibit an intermediate $\sigma^4$ sample complexity even in the classical MRA model. Further, we characterise the dependence of the estimation rate on the support size $s$ as $O_p(1)$ and $O_p(s^{3.5})$ in the dilute and moderate regimes of sparsity respectively. Our techniques have implications for the problem of \textit{crystallographic phase retrieval}, indicating a certain local uniqueness for the recovery of sparse signals from their power spectrum. 
Our results explore and exploit 
connections of the MRA estimation problem with two classical topics in applied mathematics: the \textit{beltway problem} from combinatorial optimization, and \textit{uniform uncertainty principles} from harmonic analysis.} Our techniques include a certain enhanced form of the probabilistic method, which might be of general interest in its own right.
\end{abstract}

\tableofcontents

\section{Introduction}
\subsection{The MRA problem}
The Multi Reference Alignment (MRA) problem is a simple model that captures fundamental characteristics of various statistical models with latent group actions. It arises in various questions across science and engineering such as structural biology  ~\cite{Sad89,Dia92,SchValNun05,ParMidMad11,TheSte12,ParChi14}, image recognition~\cite{Bro92,DryMar98,ForZerBer02,RobFarMil07}, robotics~\cite{RosCarBan19} and signal processing~\cite{Zvd03,PZAF05}. This problem also serves as a simplification for more complex ones that feature repeated observations of a signal subject to latent group actions and additive measurement noise. Such problems include, for example, the three-dimensional reconstruction of molecules using cryo-electron microscopy (cryo-EM)~\cite{BarMerBan15,SirCheSun16,PerWeeBan19}. Such models have gained salience in recent years with the remarkable growth in the scope and capabilities of data-intensive procedures in science and technology. 

The MRA problem ~\cite{BanChaSin14,Rit89,PerWeeBan19} consists in a signal $\t: \Z_L \mapsto \R$ (equivalently, a vector $\t \in \R^L$), and $n$ independent noisy observations  $y_1,\cdots,y_n$ that satisfy
\begin{equation} \label{eq:MRA-model}
y_i=R_{i} \t + \sigma \xi_i,
\end{equation}
where the $R_{i}$-s are isometries of $\R^L$, and the random variables $\xi_i$ are i.i.d. $L$-dimensional standard Gaussians $N(0,I_L)$, and $\sigma>0$ is the scale of the noise. The $R_{i}$-s are taken random, sampled from the group of cyclic shifts $\cG$ on $\R^L$, and are  independent as random variables from the noise $\{\xi_i\}_i$. 

The group of cyclic co-ordinate shifts is given by $(R_{\ell} \t)_k=\t_{k+\ell \; (\text{mod} L)}$, where $(v)_k$ denotes the $k$-th co-ordinate for a vector $v \in \R^L$. 
The canonical distribution for the isometries $R_{i}$ is uniform on the group  $\cG$, although other distributions have been considered \cite{AbbBenLee19}.

Of course, due to the latent group actions, it is not possible to recover $\theta$ exactly. Instead, our goal is to obtain an estimator $\tilde \theta$ whose distance to the orbit of $\theta$ under the action of the group $\cG$, as defined by
\begin{equation} \label{eq:varrho}
\varrho(\tilde \theta, \theta):= \min_{G \in \cG} \frac{1}{\sqrt{L}} \|\tilde \theta - G \theta\|_2
\end{equation}
is typically small with growing number of samples $n$.

On a related note, we also define the distance $\rho$ below, which will enable us to invoke results from the literature on the MRA model.
\begin{equation} \label{eq:def_rho}
\rho(\t , \phi):= \min_{G \in \cG}  \|\t - G \phi\|_2
\end{equation}
Observe that $\varrho(\t,\phi)=\frac{1}{\sqrt{L}}\rho(\t,\phi)$; so results in the two metrics are simple scalings of each other by a factor of $\sqrt{L}$.

In this work, we focus on the statistical performance achievable in the MRA model. Of key interest is the dependence on the typical behaviour of $\varrho$ on the quantities $n$ and $\sigma$  for the asymptotics $n, \sigma \to \infty$ which are well justified by applications such as molecular spectroscopy~\cite{PerWeeBan19,Sig98}. In this regime, statistical rates of convergence are of the form
$$
\E\varrho(\tilde \theta, \theta)\le C(L,s) \frac{\sigma^\alpha}{\sqrt n}
$$
where $\alpha$ is an exponent that critically controls the performance of $\tilde \theta$ in the regime of interest~\cite{BanRigWee17,PerWeeBan19}. We also keep track of other important quantities such as the dimension $L$ or the sparsity $s$ of the signal $\theta$ but only insofar as they appear in leading terms. Dual to the above \emph{rate}, one may consider the \emph{sample complexity} of $\tilde \theta$, which is the number $n$ of samples required to achieve accuracy $\eps$. Equating the right-hand side of the ablve display with $\eps$ and solving for $n$ yields a sample complexity of $\sigma^{2\alpha}/\eps^2$. In this work, we always achieve parametric rates where the dependence on $n$ is $n^{-1/2}$ and hence, the sample complexity thus scales as $\eps^{-2}$ in $\eps$. As a result, we refer to $\sigma^{2\alpha}$ as the sample complexity of $\tilde \theta$.


While the MRA problem has been mostly attacked using the synchronization approach  \cite{BanChaSin14}, it is only recently that it was recognized as a Gaussian mixture model~\cite{BanRigWee17} which has enabled the use of various methods such as the method of moments~\cite{PerWeeBan19,BouBenLed18} and expectation-maximization~\cite{BenBouMa17} to recover the signal of interest $f$.  For a detailed discussion on the likelihood landscape of such models, we refer the reader to  ~\cite{FanSunWan20,Bru19,fan2021maximum,katsevich2020likelihood}.

Using the Gaussian structure of the noise, it is straightforward to write down an expression for the likelihood function for given observations $\{y_1,\cdots,y_n\}$, where the likelihood function is parametrized by the signal parameter $\t$. If the density corresponding to $\t$ for an observation $y$ is given by $p_\t$, then we can write
\begin{equation} \label{eq:density}
p_\t(y)=\frac{1}{|\cG|} \sum_{R \in \cG} \frac{1}{(\sqrt{2\pi} \sigma)^L} \exp{\left( - \frac{\|y-R\t\|_2^2}{2\sigma^2}\right)}
\end{equation}
and the log likelihood corresponding to the data $\{y_1,\ldots,y_n\}$ as
\begin{equation} \label{eq:log-likelihood}
\mathcal{L}(\t)=\sum_{i=1}^n \log p_{\t}(y_i).
\end{equation}

The perspective of Gaussian mixture models has  enabled the discovery of a singular statistical phenomenon due to the presence of the latent isometries \cite{BanRigWee17,PerWeeBan19}. To recall this result, we introduce some notation.

We consider vectors in $\R^L$ as functions mapping $\Z_L$ to $\R$, and  consider $\Z_L$ in the standard parametrization \ref{eq:std-par}.  Let $\hat \theta \in \R^L$ denote the (discrete) Fourier transform of $\theta$, also considered as a function $\hht:\Z_L \mapsto \R$, where $\Z_L$ is viewed in the standard parametrization. Since the signal $\t$ is real, $\hht$ is symmetric about the origin. We define the positive support of $\hht$ by
$$
\psupp(\hat \theta) = \{j \mid j \in \{1, \cdots,  \lfloor (L-1)/2 \rfloor \}, \hat \theta_j \neq 0\}\,.
$$
To prohibit $\hat \theta$ to scale with the sample size $n$, it is reasonable to assume that there exists two positive constants $c$ and $c_0$, such that $c^{-1} \le \|\theta\|\le c$ and $|\hat \theta_j|\ge c_0$ for all $j \in \psupp(\hat \theta)$. The group action under consideration is the group of shifts $\Z_L$. 

We now discuss the minimax lower bound proved in~\cite{BanRigWee17}, which is also shown to give the optimal rate. To be precise, the results in ~\cite{BanRigWee17} are stated in the setting of the closely-related phase-shift model (essentially, a continuous version of the MRA), but the broad implications of the result are understood to also capture the behaviour of the MRA model. \cite{BanRigWee17} gives us the following minimax lower bound on the estimation error.
\begin{thm}\cite[Theorem~1]{BanRigWee17}
\label{thm:lower}
Let $2 \leq s \leq L/2$.
Let $\cT_s$ be the set of vectors $\theta \in \cT$ satisfying %
$\psupp(\hat \theta) \subset [s]$.
For any $\sigma \geq \max_{\theta \in \cT_s} \|\theta\|$, the phase shift model satisfies
\begin{equation}\label{eqn:minimax_lower}
\inf_{T_n} \sup_{\theta \in \cT_s} \E[\varrho(T_n, \theta)] \gtrsim  \frac{\sigma^{2s-1}}{\sqrt n} \wedge 1\,,
\end{equation}
where the infimum is taken over all estimators $T_n$ of $\theta$.
\end{thm}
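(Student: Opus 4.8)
The plan is to prove \eqref{eqn:minimax_lower} by a two-point (Le Cam) reduction: I would exhibit a pair of signals $\theta_0,\theta_1\in\cT_s$ with $\varrho(\theta_0,\theta_1)\gtrsim\delta$ for a separation $\delta$ to be tuned against $n$ and $\sigma$, such that the product laws $P_{\theta_0}^{\otimes n}$ and $P_{\theta_1}^{\otimes n}$ of the observations stay statistically indistinguishable, i.e. $\mathrm{TV}(P_{\theta_0}^{\otimes n},P_{\theta_1}^{\otimes n})$ is bounded away from $1$. The standard Le Cam argument then forces any estimator to incur risk $\gtrsim\delta$, and optimizing $\delta$ yields the rate. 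The entire difficulty is to quantify how fast a single pair of observations becomes indistinguishable as $\sigma\to\infty$: because the group averaging in \eqref{eq:density} destroys all information below a certain moment order, a sufficiently structured sparse pair can be made to agree in all of its low-order group-invariant moments, so that the per-sample divergence decays like a high power of $1/\sigma$.

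To make this quantitative I would compute a $\chi^2$-type divergence relative to the pure-noise reference $\phi_\sigma$, the density of $N(0,\sigma^2 I_L)$, since $\mathrm{TV}(p_{\theta_0},p_{\theta_1})\le \tfrac12\bigl(\int (p_{\theta_0}-p_{\theta_1})^2/\phi_\sigma\bigr)^{1/2}$ by Cauchy--Schwarz. Writing $f_\theta=p_\theta/\phi_\sigma-1$ and using the identity $p_\theta(y)/\phi_\sigma(y)=\E_{R}\exp\bigl(\langle y,R\theta\rangle/\sigma^2-\|\theta\|^2/2\sigma^2\bigr)$, the Hermite coefficient of $f_\theta$ at level $k$ is proportional to $\sigma^{-k}$ times the degree-$k$ invariant moment tensor $M_k(\theta)=\E_R[(R\theta)^{\otimes k}]$, so that $\int(p_{\theta_0}-p_{\theta_1})^2/\phi_\sigma=\sum_{k\ge 1}\tfrac{\sigma^{-2k}}{k!}\,\|M_k(\theta_0)-M_k(\theta_1)\|^2$. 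Hence if $\theta_0,\theta_1$ are arranged so that $M_k(\theta_0)=M_k(\theta_1)$ for every $k\le 2s-2$, all low-order terms cancel and the leading surviving contribution is of order $\sigma^{-2(2s-1)}$, giving a single-sample bound $\lesssim \delta^2\,\sigma^{-2(2s-1)}$ once the residual degree-$(2s-1)$ discrepancy scales linearly in $\delta$. Producing such a moment-matched pair is the algebraic heart of the argument and I expect it to be the main obstacle: in the Fourier domain the degree-$k$ invariants are generated by products $\prod_j\hat\theta_{k_j}$ over frequency tuples summing to $0\pmod L$, and for signals whose positive support lies in $[s]$ one must exhibit two magnitude/phase configurations agreeing on every such closed tuple of length $\le 2s-2$ yet separated in $\varrho$. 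This is precisely the collision phenomenon underlying the beltway problem and crystallographic phase retrieval.

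Granting the single-sample estimate, the $n$-fold step is routine because the Gaussian-reference affinities multiply: $\int \prod_i p_{\theta_a}(y_i)p_{\theta_b}(y_i)/\phi_\sigma(y_i)=\bigl(1+\langle f_{\theta_a},f_{\theta_b}\rangle_{\phi_\sigma}\bigr)^n$, so the $n$-sample testing affinity is controlled by $n$ times the single-letter divergence whenever the latter is $o(1)$. Choosing $\delta\asymp \sigma^{2s-1}/\sqrt n$ keeps $n\,\delta^2\,\sigma^{-2(2s-1)}$ bounded by a constant, the two hypotheses cannot be reliably separated, and Le Cam delivers $\inf_{T_n}\sup_{\theta\in\cT_s}\E[\varrho(T_n,\theta)]\gtrsim\delta\asymp\sigma^{2s-1}/\sqrt n$. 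The truncation at $1$ in \eqref{eqn:minimax_lower} appears because $\varrho$ is bounded on $\cT_s$: once $\sigma^{2s-1}/\sqrt n$ exceeds the diameter of the constructed two-point family the separation saturates, which accounts for the $\wedge\,1$.

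Beyond the construction, the two technical points I would watch most carefully are (i) controlling the tail of the Hermite expansion uniformly in $\sigma$, so that the terms $k\ge 2s-1$ neither dominate nor cancel the leading one — this requires the degree-$(2s-1)$ invariant of the pair to be genuinely nonzero and bounded below; and (ii) verifying that the pair can be normalized to satisfy $c^{-1}\le\|\theta\|\le c$ and $|\hat\theta_j|\ge c_0$ on its support, so that it lies in $\cT_s$. The hypothesis $\sigma\ge\max_{\theta}\|\theta\|$ is exactly what legitimizes treating the $1/\sigma$ series as an asymptotic expansion with a bona fide leading term, and I would use it to absorb the higher-order remainder into the constant.
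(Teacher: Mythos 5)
First, a framing point: this paper never proves Theorem~\ref{thm:lower} at all --- it is imported verbatim from \cite[Theorem~1]{BanRigWee17} --- so your proposal can only be compared against the argument in that reference. Your architecture (two-point Le Cam reduction, moment matching up to degree $2s-2$, a Gaussian-reference expansion in powers of $\sigma^{-2}$) is indeed the architecture of that proof, and your single-sample identity $\int (p_{\theta_0}-p_{\theta_1})^2/\phi_\sigma = \sum_{k\ge 1}\frac{\sigma^{-2k}}{k!}\,\|M_k(\theta_0)-M_k(\theta_1)\|^2$ is correct. But there are two genuine gaps. The first is that the moment-matched pair is never constructed: you explicitly defer ``the algebraic heart'' of the argument, and that heart \emph{is} the theorem. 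In the cited proof the pair is explicit: take both signals with Fourier support $\{s-1,s\}$ and identical moduli, differing only in the relative phase of the two coefficients. Since $\gcd(s-1,s)=1$, any nonzero integer relation $a(s-1)+bs=0$ has $|a|+|b|\ge 2s-1$, so every shift-invariant moment of degree $\le 2s-2$ is a function of $|\hat\theta_{s-1}|^2$ and $|\hat\theta_s|^2$ alone and the two signals agree on all of them, while the degree-$(2s-1)$ invariant $\hat\theta_{s-1}^{\,s}\,\overline{\hat\theta_s}^{\,s-1}$ separates them linearly in the phase difference; one must also check the pair stays $\varrho$-separated and inside $\cT_s$. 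Without exhibiting such a pair, nothing is proved.

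Second, your tensorization step fails as written. The quantity $\int(p_{\theta_0}-p_{\theta_1})^2/\phi_\sigma$ is not an $f$-divergence between $p_{\theta_0}$ and $p_{\theta_1}$ and does not tensorize linearly: the $n$-sample bound coming from your product formula is $(1+\|f_{\theta_0}\|^2)^n-2(1+\langle f_{\theta_0},f_{\theta_1}\rangle)^n+(1+\|f_{\theta_1}\|^2)^n$, which carries prefactors $(1+\|f_{\theta_i}\|^2)^n\approx e^{n\|f_{\theta_i}\|^2}$. Moment matching kills the low-degree part of the \emph{difference} $f_{\theta_0}-f_{\theta_1}$, but not the individual norms: $\|f_{\theta_i}\|^2\ge \|M_2(\theta_i)\|^2/(2\sigma^4)\asymp \sigma^{-4}$, and in the relevant regime $n\asymp \sigma^{4s-2}$, so already for $s=2$ the prefactor is of order $e^{c\sigma^2}$ and the bound is vacuous. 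The repair --- which is what the cited proof does, and is precisely the content of the bound this paper quotes as Theorem~\ref{thm:BRW2} --- is to tensorize a divergence that genuinely tensorizes, e.g. $D_{KL}(p_{\theta_0}^{\otimes n}\|p_{\theta_1}^{\otimes n})=n\,D_{KL}(p_{\theta_0}\|p_{\theta_1})$, and to control the single-sample divergence by your Gaussian-reference expansion via $D_{KL}\le \chi^2(p_{\theta_0}\|p_{\theta_1})\le e^{1/2}\int(p_{\theta_0}-p_{\theta_1})^2/\phi_\sigma$, where the pointwise bound $p_{\theta_1}(y)\ge e^{-\|\theta_1\|^2/(2\sigma^2)}\phi_\sigma(y)\ge e^{-1/2}\phi_\sigma(y)$ follows from Jensen's inequality for mean-zero signals. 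This, rather than ``absorbing the higher-order remainder,'' is the precise place where the hypothesis $\sigma\ge \max_{\theta\in\cT_s}\|\theta\|$ is used.
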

A careful inspection of the proof of this lower bound indicates that it is in fact driven by specific cancellations of the Fourier coefficients of $\theta$. Indeed, if $\psupp(\hat \theta)\subset[(L-1)/2]$, there exists specific sparsity patterns for the Fourier transform of $\theta$ that make it hard to estimate: in this case, Theorem~\ref{thm:lower} indicates a worst-case lower bound with a terrible sample complexity: $\sigma^{2L-2}$. This result is mitigated  in~\cite{PerWeeBan19} where it is proved that if $\psupp(\hht)=[L/2]$, that is if $\hat \theta$ has \emph{full support}---recall that we assumed $|\hat \theta_j|\ge c_0$ for all $j \in \supp(\hat \theta)$---then a sample complexity of $\sigma^6$ may be achieved. The unusual exponent $6=2\cdot 3$ comes from the fact that in this case, the orbit of $\theta$ may be identified from the first three moments of $Y$.

While $\sigma^6$ is a significant improvement over  $\sigma^{2L-2}$, this  scaling is still inauspicious in applications where $\sigma$ is large (notice that the dependence of sample complexity on $\s$ scales like the square of that of the estimation rate as in \eqref{eqn:minimax_lower}). This situation has prompted the investigation of settings where the orbit of $\theta$ could be recovered robustly only from its first two moments, thus leading to the a sample complexity $\sigma^4$. This is the case for example if $\hat \theta$ has full support and the distribution of the isometries on the group $\cG$ is \textit{not uniform} but follows some specific distribution instead~\cite{AbbBenLee19}.

In this paper, we unveil generic conditions on the signal $\theta$ under which a sample complexity of $\sigma^4$ can be achieved in the original MRA model, where the distribution of isometries from the group $\cG$ \textit{is uniform}. Interestingly, these results are built on connections with other well-studied questions in applied mathematics, in particular the \emph{beltway problem} from combinatorial optimization and \emph{uniform uncertainty principles} from harmonic analysis.

In methodological terms, obtaining a $\s^2/\sqrt{n}$ rate will be found to be related to our ability to recover a signal from the second moment tensor, and  in turn, from the modulus of the Fourier coefficients of its observations in the MRA model. This  will eventually be made possible by the sparsity of the signal. In a related vein, we note that the well-known problem of phase retrieval, albeit in a  different context, examines signal recovery from the modulus of its random linear measurements. However, it may be noted that our observational setting in the MRA model with the latent group action has a very different and more complicated structural setup than the existing literature on phase retrieval, which largely focuses on a specific setting akin to compressed sensing with limited information. Nonetheless, there are natural connections to phase retrieval, especially to the so-called \textit{crystallographic phase retrieval} problem \cite{bendory2020toward}; this is discussed in detail in Section \ref{sec:Phase_Retrieval}.

\subsection{Estimation rates for generic sparse signals}

In the present work, we focus attention on the class of sparse signals in the context of the MRA problem, and explore rates if estimation for such signals. Our investigations naturally demarcate the set of sparse signals into two regimes, marked by results of differing nature. 

On one hand, we have the \textit{dilute} regime of sparsity (roughly, of the order $L^{1/3}$), where a randomly chosen subset of $\Z_L$ of that size does not have have any multiplicities in its mutual differences. This condition is referred to as the collision-free property of the subset. In the dilute regime of sparsity, we establish $O(\sigma^4)$ sample complexity. This is complemented by the \textit{moderate} regime of sparsity, which extends all the way up to order $L/\log^5L$ where we show that the improved sample complexity may also be achieved.

We also unveil the dependence structure of the  estimation rate asymptotics on the sparsity $s$ of the signal. In the dilute regime, there is an $O_p(1)$ dependency; whereas in the moderate regime of sparsity, the dependency is $O_p(s^{3.5})$. 
Observe that  we are considering asymptotic rates of estimation which are by nature local to the true signal; this is different from non-asymptotic rates which usually involve  additional logarithmic dependence on $L$.

The relative difficulty in obtaining $O(\sigma^4)$ sample complexity with increasing size of the signal support is reflected in the dependence structure of the asymptotics on the sparsity, as well as in the additional assumptions required in the moderate regimes. 
Such behaviour is perhaps well anticipated in view of the fact that, in the regime of full signal support, sample complexity of order better than $\sigma^6$ generically not possible, a result which we also establish in this work.

In the dilute regime, our methodological ingredients include exploiting collision-freeness, whereas in the moderate regime they include  repeated, nested applications of the probabilistic method in order to find frequency sets conducive to our analysis in the Fourier space, aided by the tool of uniform uncertainty principles.

In order to discuss our results in detail, we first introduce a few notations and concepts.
\subsubsection{Some notations and concepts} \label{sec:notn}
In this work, we will set $\cG$ to be the group of rotations  by the elements of $\Z_L$, that is, for each $g \in \cG$ and $v: \Z_L \mapsto \C$, we define the action $[g \cdot v]$ as $[g \cdot v](i)=v(i+g) \quad \forall i \in \Z_L$. 

We note in passing that the results of this paper would also hold under the action of a richer group of isometries $\cG$ where the rotations of $\Z_L$ are augmented with a reflection or ``flip'', that is, the operation $\alpha$ acting on $\Z_L$ that sends  $x \mapsto \check{x}= -x$; in other words, the group $\cG=\Z_L \rtimes \Z_2$. In fact, there has been recent interest focussed on \textsl{dihedral} multi reference alignment \cite{bendory2022dihedral}. However, for purposes of presentation, we will adhere to the isometry group $\cG$ given by the rotations of $\Z_L$.

We view the signal $\t$ as a function  on the discretised circle $\Z_L$, where the elements of the latter are enumerated as 
\begin{equation} \label{eq:std-par}
\Z_L=\{\lfloor - (L-1)/2 \rfloor,  \lfloor - (L-1)/2 \rfloor+1, \cdots, \lfloor (L-1)/2 \rfloor-1, \lfloor (L-1)/2 \rfloor \}
\end{equation}
We call this parametrization the \textit{standard parametrization} of $\Z_L$. The \textit{positive} part of $\Z_L$ may then be defined as 
\begin{equation} \label{eq:std-par-pos}
\Z_L^+=\{0,1, \cdots, \lfloor (L-1)/2 \rfloor-1, \lfloor (L-1)/2 \rfloor \}.
\end{equation}

We include here a discussion on restricted MLEs, which will constitute the main estimators in describing our statistical results. For a deterministic set of signals $\cT$ (where the true signal is known to belong), it is natural to maximise the log likelihood \eqref{eq:log-likelihood} over $\t \in \cT$. We refer to such MLEs as restricted MLE. In the setting where the signal is sampled from generative models, we consider MLEs restricted to signal classes $\cT$ that are events of high probability under the respective generative model (as relevant model parameters tend to $\infty$). For more on the relationship between deterministic classes of \textit{good} signals and generative models, we direct the reader to Remark~\ref{rem:det_vs_gen}. 

Further notations and concepts used in this paper that are of a more generic nature can be found in the Appendix \ref{a:notn}.

\subsubsection{The \textit{dilute} regime: sparse collision-free signals } \label{sec:coll-free}

 
We first define the notion of collision-free property of the support of a signal, and subsequently use it for introducing the appropriate signal class for the dilute regime, which, roughly speaking, consists of signals that can at best be of size $O(L^{1/2})$ and typically of size $O(L^{1/3})$.
 
\begin{defi}
 For a vector $v \in \R^L$, we will denote by $\cD(v)$ the (multi-)set of differences $\{v(i) - v(j):  1\le i,j \le d\}$. In general, this is a set of differences with multiplicities. In case the multiplicity is exactly 1 for each difference appearing in $\cD(v)$, we call the vector $v$ \textit{collision-free}, that is there are no repeated differences in its  support. 
\end{defi} 
Notice that being collision-free  is really a property of the support $\supp(v)$ of the vector $v$. 
 
 We are now ready to define the signal class that we will investigate in the dilute regime. 
\begin{defi} \label{defi:dilute}
We consider the set $\cT  \subset \R^L$ to consist of the signals $\t:\Z_L \mapsto \R$ that satisfy the conditions outlined below.
\begin{itemize}

\item[(i)] $\t$ is collision free

\item[(ii)] There exist positive numbers $m,M,\eps>0$ (uniform for the set $\cT$) such that $m \le |\t(i)| \le M$ on $\supp(v)$, and $s:=|\supp(\t)| \ge (2+\eps) M^2/m^2$.
\end{itemize}
\end{defi}

We can then state the following theorem.
\begin{thm}\label{thm:rates-beltway}
Let $\mathcal{T}$ be the set of signals  as in Definition \ref{defi:dilute}.  Then for $\s$ bigger than a threshold $\s_0(L)$, for any signal $\t_0 \in \mathcal{T}$, the restricted MLE $\tlt_n$ for the MRA problem satisfies $\sqrt{n}{\displaystyle \varrho(\tlt_n,\theta_0)=O_p(\sigma^2)}$ as $n \to \infty$.
\end{thm}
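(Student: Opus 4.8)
The plan is to treat $\tilde\theta_n$ as an M-estimator over the finite-dimensional restricted class $\cT$ and to read the rate off the asymptotic covariance, which I will show scales like $\sigma^4/n$ on the orbit quotient. Fix $\theta_0\in\cT$ and work locally. Around $\theta_0$ the class $\cT$ is parametrized by the $s$ values on the fixed support $S_0=\supp(\theta_0)$ (support-changing moves are discrete and separated), so $\{p_\theta\}$ is a smooth finite-dimensional family and classical maximum-likelihood asymptotics apply once two things are in place: (i) identifiability up to the cyclic orbit on $\cT$, giving consistency, and (ii) a non-degenerate lower bound on the Fisher information $I(\theta_0)$ in the directions transverse to the orbit. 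Given (i)--(ii), local asymptotic normality gives $\sqrt n(\tilde\theta_n-G_n\theta_0)\rightsquigarrow N(0,I(\theta_0)^{+})$ for the optimal alignment $G_n$, whence $\sqrt n\,\varrho(\tilde\theta_n,\theta_0)=O_p(\lambda_{\min}(I(\theta_0))^{-1/2})$, and the whole theorem reduces to proving $\lambda_{\min}(I(\theta_0))\gtrsim c(L,s)\,\sigma^{-4}$ uniformly for $\sigma>\sigma_0(L)$.

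First I would establish the high-noise expansion of the log-likelihood from the mixture form \eqref{eq:density}. Expanding for large $\sigma$, the informative part of the score is organized by the shift-averaged moment tensors of $\theta$, the $d$-th moment contributing to the Fisher information at order $\sigma^{-2d}$ with a remainder controlled uniformly on $\cT$ once $\sigma$ exceeds a threshold $\sigma_0(L)$ depending on the diameter and on $m,M$. The mean ($d=1$) is a single shift-invariant scalar and constrains only one direction, at the favorable order $\sigma^{-2}$; the operative bottleneck is the $d=2$ term, the shift-averaged second moment, which is equivalent to the autocorrelation $a_\theta(k)=\sum_j\theta(j)\theta(j+k)$, i.e. the power spectrum $|\hht|^2$ (noise contributes only the known $\sigma^2 I$ to the diagonal). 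Thus $I(\theta_0)=\sigma^{-4}J(\theta_0)+O(\sigma^{-6})$, where $J(\theta_0)$ is the Gram matrix of the differential of the autocorrelation map $\theta\mapsto a_\theta$ at $\theta_0$ restricted to $\cT$, and the theorem reduces to a uniform lower bound $\lambda_{\min}(J(\theta_0))\gtrsim c(L,s)>0$ transverse to the orbit.

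The crux --- and the step I expect to be the main obstacle --- is exactly this lower bound, a quantitative local-uniqueness statement for crystallographic phase retrieval of collision-free signals, and this is where the beltway structure enters. On the full space $\R^L$ the autocorrelation map is massively degenerate: every phase perturbation of $\hht$ preserves $|\hht|^2$, which is precisely the obstruction behind the $\sigma^6$ rate for full-support signals. The point of restricting to $\cT$ is that these phase perturbations destroy the sparse support and hence leave the class. Concretely, for $\theta_0$ collision-free the differences $\{i-j:i,j\in S_0\}$ are distinct, so for each nonzero lag $k$ in the difference set $a_{\theta_0}(k)=\theta_0(i)\theta_0(j)$ for the unique pair with $i-j=k$; on the fixed support the second-moment map is therefore identified with the pairwise-product map $(v_i)_{i\in S_0}\mapsto(v_iv_j)_{i\ne j}$, whose differential has trivial kernel as soon as $s\ge 3$ and all $|v_i|\ge m>0$ (the relations $c_iv_j+c_jv_i=0$ across any three indices force all $c_i=0$). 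The conditions of Definition \ref{defi:dilute} --- in particular $s\ge(2+\eps)M^2/m^2$, which forces $s\ge 3$ and controls the dynamic range --- then upgrade injectivity to a uniform bound $\lambda_{\min}(J(\theta_0))\gtrsim c(L,s)$ with an $s$-independent constant, matching the $O_p(1)$ dependence claimed in the dilute regime.

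The remaining work is bookkeeping. Identifiability (i) requires that the full law $p_\theta$ determine the cyclic orbit on $\cT$; since $\cG$ excludes reflections, the residual reflection ambiguity left by the autocorrelation is broken by the third moment (the bispectrum), exactly as in the full-support analysis, and this poses no new difficulty because it is a far-from-$\theta_0$ global statement that does not affect the local Fisher information. I would also verify that support-moving shifts meet the fixed-support chart only at alignments mapping $S_0$ into itself, so that the orbit is locally one-dimensional and $\varrho$ reduces to the Euclidean distance up to the $1/\sqrt L$ scaling, and then assemble consistency and asymptotic normality for the restricted MLE in the usual way. I expect the genuinely delicate points to be the uniformity of the remainder in the high-noise expansion over $\cT$ (compact once the support and the magnitude window $[m,M]$ are fixed), so that the $\sigma^{-4}$ term truly dominates for $\sigma>\sigma_0(L)$, and propagating the conditioning constant from the infinitesimal pairwise-product computation into a robust positive-definiteness of $J(\theta_0)$ holding uniformly over all admissible collision-free supports.
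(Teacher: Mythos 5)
Your proposal follows essentially the same route as the paper: reduce the rate to a Fisher-information lower bound via asymptotic normality of the restricted MLE (the paper's Proposition~\ref{prop:CurveLB-normality}), isolate the $\sigma^{-4}$ term coming from the shift-averaged second moment in the high-noise expansion (the paper does this through the KL moment expansion of Theorem~\ref{thm:BRW2} together with Proposition~\ref{prop:linear_LB}), and use collision-freeness to show that the differential of the autocorrelation map on the fixed support is non-degenerate. The one substantive difference is at the crux. Where you prove only qualitative injectivity of the pairwise-product differential (your three-index relation argument, which is correct for $s\ge 3$) and then assert that $s\ge(2+\eps)M^2/m^2$ upgrades this to a uniform bound, the paper's Lemma~\ref{lem:lbound-beltway} closes this step with an explicit identity: collision-freeness converts $\|\Delta_2(\theta,\theta_0)\|_F^2$ (to leading order in $h=\theta-\theta_0$) into $\frac{1}{L}\sum_{i\ne j}\bigl(\theta_0(i)h(j)+h(i)\theta_0(j)\bigr)^2 = \frac{1}{L}\bigl(2\|\theta_0\|^2\|h\|^2-4\sum_i\theta_0(i)^2h(i)^2+2\langle\theta_0,h\rangle^2\bigr)\ge \frac{2}{L}\,(sm^2-2M^2)\|h\|^2$, and this is exactly where the hypothesis $s\ge(2+\eps)M^2/m^2$ is consumed. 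Your qualitative version still suffices for the theorem as stated: for fixed $L$ the admissible collision-free supports are finitely many and the value set $\{m\le|v_i|\le M\}$ is compact, so injectivity gives some $\lambda_{\min}(J(\theta_0))\ge c(L,s)>0$ uniformly over $\cT$ and hence a uniform threshold $\sigma_0(L)$. But it does not deliver the explicit $\sqrt{s/L}$ constant, and that constant is precisely what underwrites the $O_p(1)$-in-$s$ dependence of Theorem~\ref{cor:dependence}(i) which you also claim to "match"; that claim requires the displayed computation, not injectivity plus compactness. Two minor slips: the group $\Z_L$ is finite, so the orbit of $\theta_0$ is discrete and locally a single point, not "locally one-dimensional" (no pseudo-inverse is needed in the normality statement); and the high-noise expansion $I(\theta_0)=\sigma^{-4}J(\theta_0)+O(\sigma^{-6})$ modulo the rank-one mean direction is nontrivial --- it is essentially the content of the cited Theorem~\ref{thm:BRW2} --- so it should be cited or proved rather than taken for granted.
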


A crucial ingredient in the proof of Theorem \ref{thm:rates-beltway} is the following curvature lower bound on the second moment tensor, which we state below as a result of independent interest.
\begin{lem} \label{lem:lbound-beltway}
Let $\mathcal{T}$ be the set of vectors $\t \in \R^L$ as in Definition \ref{defi:dilute}. Then, for any $\t,\t_0 \in \mathcal{T}$, we have
\[  \| \E_{G} [(G\t)^{\otimes 2}] -  \E_{G} [(G\t_0)^{\otimes 2}]  \|_F \ge \sqrt{\frac{2\eps}{2+\eps}} \cdot \frac{1}{\sqrt{L}} \cdot \sqrt{s} \cdot \rho(\t,\t_0)  \] for some universal constant $c$.
\end{lem}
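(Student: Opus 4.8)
The plan is to reduce the Frobenius distance between the two second-moment tensors to a weighted $\ell_2$ distance between autocorrelation sequences, and then lower bound the latter using the collision-free structure together with the magnitude condition of Definition~\ref{defi:dilute}. First I would record that, for any $\psi\in\R^L$, the matrix $\E_G[(G\psi)^{\otimes2}]$ is circulant with generating sequence the normalized autocorrelation $a_\psi(t):=\tfrac1L\langle\psi,R_t\psi\rangle$, where $R_t$ is the shift by $t$. Since the Frobenius norm of a circulant matrix is $\sqrt L$ times the $\ell_2$ norm of its generating sequence, this yields the exact identity $\|\E_G[(G\theta)^{\otimes2}]-\E_G[(G\theta_0)^{\otimes2}]\|_F^2=L\sum_{t\in\Z_L}(a_\theta(t)-a_{\theta_0}(t))^2$. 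Because $a_\psi$ is shift invariant, I may replace $\theta_0$ by its optimal alignment $\phi:=G^\ast\theta_0$, where $G^\ast\in\cG$ achieves the minimum defining $\rho(\theta,\theta_0)$; then $\rho(\theta,\theta_0)=\|\theta-\phi\|$ while the left-hand side is unchanged.

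Next I would linearize exactly. Writing $w:=\theta+\phi$ and $\delta:=\theta-\phi$ and using that $\psi\mapsto a_\psi$ is quadratic, a direct expansion gives the pointwise identity $a_\theta(t)-a_\phi(t)=\tfrac1{2L}\langle w,(R_t+R_{-t})\delta\rangle$, hence $\|\E_G[(G\theta)^{\otimes2}]-\E_G[(G\theta_0)^{\otimes2}]\|_F^2=\tfrac1{4L}\sum_t\langle w,(R_t+R_{-t})\delta\rangle^2$. Expanding the square and summing over $t$ produces a leading term $\tfrac1{2L}\big(\|w\|^2\|\delta\|^2+\langle w,\delta\rangle^2\big)$, coming from the zero-lag part of the autocorrelations of $w$ and $\delta$, plus correction terms built from their nonzero-lag autocorrelations.

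The constant $\sqrt{2\eps/(2+\eps)}$ and the factor $\sqrt s$ emerge from the leading term, and this is cleanest when $\supp\theta=\supp\phi=:S$. There the collision-free property guarantees that distinct pairs in $S$ contribute to distinct lags, so no cancellation occurs and $L\sum_{t\neq0}(a_\theta(t)-a_\phi(t))^2=\tfrac1L\sum_{i\neq j\in S}(\theta_i\theta_j-\phi_i\phi_j)^2$, which evaluates to $\tfrac1L\big(\tfrac12\|w\|^2\|\delta\|^2+\tfrac12\langle w,\delta\rangle^2-\sum_{i}w_i^2\delta_i^2\big)$. Discarding the nonnegative $\langle w,\delta\rangle^2$ and using $w_i^2\le4M^2$ together with $\|w\|^2\ge4sm^2$ (valid when the signs of $\theta$ and $\phi$ agree on $S$) bounds this below by $\tfrac1L(2sm^2-4M^2)\|\delta\|^2$; the hypothesis $s\ge(2+\eps)M^2/m^2$ is calibrated precisely so that $2sm^2-4M^2\ge\tfrac{2\eps}{2+\eps}sm^2$, giving the advertised inequality, up to the amplitude scale $m$ that the constant must absorb (forced, since the left-hand side scales like the square of the signal amplitude whereas $\rho$ scales linearly).

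Two points require genuine care, and I expect the second to be the crux. First, $\E_G[(G\theta)^{\otimes2}]$ is insensitive to a global sign, so the bound is vacuous for $\theta_0=-\theta$ unless signs are pinned down; the argument therefore relies on the coordinates of $w=\theta+\phi$ not cancelling, i.e.\ on the signs being resolved (as the positive-amplitude reading of the magnitude condition provides), and collision-freeness alone cannot repair this. Second, when $\supp\theta\neq\supp\phi$ the union $\supp\theta\cup\supp\phi$ is generally not collision-free, so the nonzero-lag correction terms no longer vanish and may be negative. Controlling them is where the beltway structure enters: since $\supp\theta$ and $\supp\phi$ are each collision-free, every lag is hit by at most one pair from each support, which caps the number of colliding lags and lets one show the corrections are dominated by the leading term. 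This collision-counting estimate, rather than the algebra of the main term, is what I expect to be the heart of the proof.
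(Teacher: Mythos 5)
Your core computation is correct and is essentially the paper's argument: the same reduction to the circulant/autocorrelation structure, the same use of collision-freeness to ensure that each nonzero lag receives the contribution of exactly one ordered pair of support points, the same expansion of $\sum_{i\ne j}(\cdot)^2$, and the same calibration $s\ge(2+\eps)M^2/m^2$ producing the constant $\sqrt{2\eps/(2+\eps)}$ (both you and the paper in fact produce an extra factor of $m$, which the paper silently absorbs into the constant). Your polarization identity $a_\t(t)-a_\phi(t)=\tfrac{1}{2L}\langle w,(R_t+R_{-t})\delta\rangle$ is a modest but genuine improvement in rigor: it makes the inequality exact, whereas the paper first discards the quadratic term $h\ast\check{h}$ with only the remark that $\|h\|_2$ is small and one may keep the leading order.

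Where you diverge from the paper is in your assessment of where the remaining difficulty lies, and here there is a gap. You expect the crux to be a collision-counting argument when $\supp\t\neq\supp\phi$; no such argument appears in the paper, and none is needed. The paper's proof is \emph{local}: it works in the regime where $\rho(\t,\t_0)$ is small (which is also the only regime in which the lemma is invoked, in the proof of Theorem \ref{thm:rates-beltway}), and in that regime condition (ii) of Definition \ref{defi:dilute} forces the supports to coincide --- any index in the symmetric difference of $\supp\t$ and $\supp\t_0$ (after optimal alignment) contributes at least $m^2$ to $\|h\|_2^2$, contradicting smallness. The identical observation pins down the signs: on the common support, an index at which $\t$ and the aligned $\t_0$ have opposite signs contributes at least $(2m)^2$, so for $\|h\|_2<m$ the signs agree automatically and your bound $\|w\|^2\ge 4sm^2$ holds with no extra hypothesis. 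Thus both of your ``points of genuine care'' evaporate once locality is invoked, and with that one observation added your proof is complete (and cleaner than the paper's). Note finally that your sign-flip example $\t_0=-\t$ is not merely a case your argument misses: it shows the lemma as literally stated, for arbitrary $\t,\t_0\in\cT$, is false, since then the two second-moment tensors coincide while $\rho(\t,-\t)>0$ generically. The statement must be read, as the paper's proof and its application make clear, as a bound valid for $\rho(\t,\t_0)$ sufficiently small; your instinct that collision-freeness alone cannot repair the global statement is therefore correct.
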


The collision-free property of the support of the signal, as enunciated in this section, is typically associated with the signal being considerably sparse; hence the name \textit{dilute} regime. In fact, it may be shown that for the signal support to be collision-free, the size $s$ of the support cannot exceed $O(L^{1/2})$. On the other hand, it can also be shown that typical subsets (e.g., chosen uniformly at random from the co-ordinates) the collision free property holds with high probability if $s=o(L^{1/3})$. We refer the reader to Appendix \ref{a:coll_free} for further on these size bounds.

\subsubsection{The \textit{moderate} regime: generic sparse symmetric signals} \label{sec:sparse-symm}
In this section, we demonstrate that we can extend far beyond the dilute regime and obtain a sample complexity of $\sigma^4$ for generic symmetric signals in the so-called \textit{moderate} regime of sparsity, extending all the way up to support size $s=O(L/\log^5L)$.  In doing so, we invoke uncertainty principles from Fourier analysis as an effective technique for the studying MRA problem. 

To this end, we define the notion of the Bernoulli-Gaussian distribution, and the symmetric version thereof. The Bernoulli-Gaussian distribution is a popular model for modelling generic or typical sparse signals in statistics and signal processing ~\cite{Kor82,Sou11}. In this work, we use the symmetric Bernoulli-Gaussian distribution in order to model sparse symmetric signals for investigating estimation rates under the MRA model. Such symmetry hypothesis  is well-motivated by the fact that many natural objects of interest, such as molecules, exhibit symmetries that are of significance in spectroscopy \cite{bunker2006molecular,reddy1998symmetry,wigner2012group}; this includes reflection symmetries that are related to the important notion of chirality \cite{barron1986symmetry,buckingham2004chirality}.  

A signal following the Bernoulli-Gaussian distribution with variance $\zeta^2$ and sparsity $s$ consists in generating the signal support via independent random sampling of points in $\Z_L$ with probability $s/L$ each, and then independently generating the signal values on the support via a $N(0,\zeta^2)$ distribution for each point. The symmetric Bernoulli-Gaussian distribution differs from the general case defined above only in the fact that its support is constrained to be symmetric. To obtain this, we consider $\Z_L$ in its standard parametrization \eqref{eq:std-par}, and pick the \textit{positive} part $A_+$ of the support by independent random sampling from $\Z_L^+$ with probability $s/L$, and then obtain the full symmetric support $A$ via reflection about the origin, i.e. $A=A_+ \cup (-A_+)$.

While the Bernoulli-Gaussian distribution is standard for modelling sparse signals, our results apply to far more general signal classes. This includes the $N_{[-s,s]}^{\symm}(0,\zeta^2 I)$ distribution, which entails that the support is $[-s,s]$ and the signal values are independent $N(0,\zeta^2)$ random variable. In fact, other than independent Gaussian values, our results only require that the signal support be sparse and sufficiently \textit{generic}, in a precise arithmetic sense that we call \textit{cosine genericity}. 

We call such signals \textit{generic sparse symmetric signals}. Our main estimation rate results will be stated below in terms of this signal class; the precise and detailed definitions for it are provided in Appendix \ref{a:gen_signal}. 

\begin{thm} \label{thm:sparse-symm-G}
Let $\log^9 L \le s \le L/\log^5 L$. Consider a generic $s$-sparse symmetric signal $\t_0: \Z_L \mapsto \R$, with dispersion $\zeta^2$, sparsity constants $(\a,\b)$ and index $\tau>0$.
Then for $\s$ bigger than a threshold $\s_0(L)$, with high probability in $\t_0$,  the restricted MLE $\tlt_n$ for the MRA problem satisfies ${\displaystyle \sqrt{n}{\displaystyle \varrho(\tlt_n,\theta_0)=O_p(\sigma^2)}}$ as $n \to \infty$.
\end{thm}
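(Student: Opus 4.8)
The plan is to follow the same architecture that underlies Theorem~\ref{thm:rates-beltway}, namely to reduce the estimation rate to a curvature lower bound on the second-moment map, and then to establish that curvature bound for generic sparse symmetric signals. Recall that the likelihood \eqref{eq:log-likelihood} for the MRA model depends on $\t$ only through the orbit-invariant moments of $Y = R\t + \s\xi$. A standard asymptotic analysis of the restricted MLE $\tlt_n$ (local asymptotic normality, or a direct Taylor expansion of the population log-likelihood around $\t_0$) shows that the estimation error $\sqrt{n}\,\varrho(\tlt_n,\t_0)$ is controlled by the inverse of the Fisher information, which in the high-noise regime $\s \gg 1$ is governed by the lowest-order moment that distinguishes nearby orbits. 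The key structural fact is that the first moment $\E[Y]$ is orbit-blind (it is a constant vector under the uniform shift action), so the leading discriminating object is the \emph{second moment} $\E_G[(G\t)^{\otimes 2}]$. Hence, if one can show a bound of the form $\|\E_G[(G\t)^{\otimes 2}] - \E_G[(G\t_0)^{\otimes 2}]\|_F \gtrsim c(L,s)\,\rho(\t,\t_0)$ uniformly over the relevant signal class, then the second moment alone identifies the orbit with a nondegenerate ``curvature,'' and the standard $\s^4$ sample-complexity machinery (equivalently the $\s^2/\sqrt{n}$ rate) goes through exactly as in the proof of Theorem~\ref{thm:rates-beltway}. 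First I would therefore isolate and state this reduction as the common engine, so that the only thing left to prove for Theorem~\ref{thm:sparse-symm-G} is the curvature estimate for generic sparse symmetric signals.

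The substance of the argument is thus to replace Lemma~\ref{lem:lbound-beltway}, which exploited collision-freeness, with an analogous lower bound valid in the moderate regime $\log^9 L \le s \le L/\log^5 L$, where collisions are abundant and the clean combinatorial cancellation is no longer available. The natural route is to pass to the Fourier domain. The second-moment tensor $\E_G[(G\t)^{\otimes 2}]$, after diagonalization via the shift action, is equivalent to the data $(|\hht_j|^2)_j$ together with the autocorrelation (power spectrum) of $\t$; the difference of two such tensors measures the discrepancy of the power spectra $|\hht_j|^2 - |\hht_{0,j}|^2$. The plan is to show that, for a generic sparse symmetric signal, controlling $\|\E_G[(G\t)^{\otimes 2}] - \E_G[(G\t_0)^{\otimes 2}]\|_F$ from below in terms of $\rho(\t,\t_0)$ amounts to a quantitative injectivity (a local uniqueness) statement for recovery of $\t$ from its power spectrum --- precisely the crystallographic phase retrieval connection advertised in the introduction. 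The arithmetic ``cosine genericity'' of the support is what prevents degenerate cancellations among the cosine exponentials $\cos(2\pi jk/L)$ that appear when one expands $|\hht_j|^2$ in terms of the support differences, playing the role that collision-freeness played in the dilute regime.

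The core analytic tool will be a uniform uncertainty principle: one wants to guarantee that there exists a frequency set $S \subset \Z_L^+$, of controlled size, on which the family of functions $k \mapsto \cos(2\pi jk/L)$ (indexed by the difference set of the support) is well-conditioned, so that a restriction of the second-moment discrepancy to $S$ already dominates $\rho(\t,\t_0)$. As flagged in the introduction, the construction of such a favorable $S$ is carried out by \emph{repeated, nested applications of the probabilistic method} --- the ``enhanced form of the probabilistic method'' mentioned in the abstract. Concretely, I would sample candidate frequency sets at random, use concentration to show that a random restriction of the relevant Gram matrix is close to a scalar multiple of the identity (a restricted-isometry-type estimate) with probability bounded away from $0$, and then iterate/nest the construction to upgrade this to the simultaneous control needed uniformly over the pair $(\t,\t_0)$ in the class. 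The genericity constants $(\a,\b)$ and the index $\tau$ in the hypotheses are exactly the parameters quantifying how strong this well-conditioning is, and the upper bound $s \le L/\log^5 L$ together with the lower bound $s \ge \log^9 L$ is what makes the probabilistic construction succeed with high probability (the polylogarithmic slack absorbs union bounds over the difference set and the failure probabilities of the nested rounds). This curvature bound will carry an $s$-dependent prefactor, and tracking it carefully is what yields the stated $O_p(s^{3.5})$ dependence of the rate on the sparsity.

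The hard part will be exactly this construction of a well-conditioned frequency set in the moderate regime and making the uniform uncertainty estimate uniform over the signal class rather than pointwise in $\t_0$. In the dilute regime, collision-freeness gives an essentially deterministic, diagonal-dominance structure to the Gram matrix, so Lemma~\ref{lem:lbound-beltway} follows by elementary means; here the off-diagonal interactions from colliding differences do not vanish, and one must instead show they are \emph{collectively} small after restricting to $S$. Two difficulties compound: first, the set $S$ must work simultaneously for the whole difference set $\cD$ of the support, whose size grows with $s$, forcing the nested/iterated probabilistic argument rather than a single random draw; and second, the bound must survive the $\argmin$ over the group in $\rho(\t,\t_0)$, i.e. one must rule out that a shifted copy of $\t_0$ spuriously matches the power spectrum of $\t$ --- this is the genuine phase-retrieval ambiguity, and it is controlled by the cosine-genericity of the support. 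I would therefore devote the bulk of the argument to the uniform uncertainty principle and its probabilistic construction, treating the passage from the curvature bound to the MLE rate as the same routine local asymptotic argument already used for Theorem~\ref{thm:rates-beltway}.
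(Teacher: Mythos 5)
Your overall architecture does match the paper's: reduce the rate statement to a curvature lower bound $\|\Delta_2(\vp,\t_0)\|_F \gtrsim c(L,s)\,\rho(\vp,\t_0)$ (then apply Propositions \ref{prop:linear_LB} and \ref{prop:CurveLB-normality}), pass to the Fourier domain, and produce a good frequency set by combining a uniform uncertainty principle for randomly sampled frequencies with cosine genericity of the support. This is precisely the skeleton of Lemma \ref{lem:symm-tech} and its verification in Section \ref{sec:mod-sparsity}. However, there is a genuine gap in the one step that carries the real content: the construction of the frequency set $\L$.

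In the paper, $\L$ must satisfy two conditions \emph{simultaneously}: (a) the UUP \eqref{eq:RV} for all $s$-sparse vectors, and (b) a pointwise lower bound $\min_{\xi \in \L}|\hht_0(\xi)| \ge \fm(\cT_s)$ on the true signal's Fourier modulus. Condition (b) is indispensable — it is what permits the passage from $\sum_{\xi \in \L}|\hht_0(\xi)|^2|\hh(\xi)|^2$ down to $\fm(\cT_s)^2\sum_{\xi \in \L}|\hh(\xi)|^2$ in \eqref{eq:symm-tech-LB}; without it the curvature bound cannot be closed, since $|\hht_0(\xi)|$ may be tiny at many frequencies. Your proposal never isolates this requirement: you ask instead for well-conditioning of a Gram matrix of cosines indexed by support differences, which is not how cosine genericity enters the paper — there it lower-bounds the \emph{variance} $\vs^2 \cC(\Xi,\xi)$ of the Gaussian random variable $\hht_0(\xi)$ (Lemma \ref{lem:var}), which via negative-moment anti-concentration (Lemmas \ref{lem:neg-moment} and \ref{lem:good-set}) shows that the bad set where $|\hht_0|$ is small has size $O(L\vs^{-1/2}s^{-3/2})$. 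Moreover, your proposed mechanism — an RIP-type estimate holding ``with probability bounded away from $0$,'' then iterating/nesting for uniformity — would not produce a set satisfying both (a) and (b): a random $\L$ of the required density lies inside the good set $\fS(\k)$ only with \emph{exponentially small} probability, roughly $\exp(-c\vs^{-1/2}a s^{-3/2})$, so the typical random draw fails (b), as the paper itself emphasizes. The actual argument is an intersection trick: $\P[\text{(a)} \cap \text{(b)}] \ge \P[\text{(b)}] - \P[\text{(a)}^{\c}] \ge \exp(-c''\vs^{-1/2}a s^{-3/2}) - 5\exp(-c_3 a s^{-1}\log^{-4}L)$, which is positive exactly when $s \gg \log^{8}L/\vs$ — this is where the hypothesis $s \ge \log^9 L$ is consumed — and mere \emph{positivity} of this vanishing quantity yields existence of the required atypical $\L$. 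Finally, two of the difficulties you flag are non-issues: uniformity over perturbations $\vp$ near $\t_0$ needs no nesting, because the Rudelson--Vershynin UUP holds simultaneously for all $s$-sparse vectors and locally $\supp(\vp)=\supp(\t_0)$, so $h=\vp-\t_0$ is automatically covered; and the $\argmin$ over $\cG$ is handled locally by $\cG$-invariance, so no global phase-retrieval ambiguity has to be excluded.
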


Theorem \ref{thm:sparse-symm-G} enables us to deduce an improved sample complexity of order $\sigma^4$ for signals sampled from the symmetric Bernoulli-Gaussian distribution.
\begin{cor} \label{cor:sparse-symm-BG}
Let $\log^9 L \le s \le L/\log^5 L$. Consider a signal $\t_0$ sampled from either of:
\begin{itemize}
\item[(i)]  The symmetrized Bernoulli-Gaussian distribution on $\Z_L$ with mean zero, variance $\zeta^2$ and sparsity $s$, or
\item[(ii)]   The  $N_{[-s,s]}^{\mathrm{symm}}(0,\zeta^2I)$ distribution.
\end{itemize}
Then, for $\s$ bigger than a threshold $\s_0(L)$, with high probability in $\t_0$, the restricted MLE  $\tlt_n$ for the MRA problem satisfies 
$\sqrt{n}{\displaystyle \varrho(\tlt_n,\theta_0)=O_p(\sigma^2)}$ as $n \to \infty$.
\end{cor}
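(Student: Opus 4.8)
The plan is to obtain Corollary~\ref{cor:sparse-symm-BG} as a direct consequence of Theorem~\ref{thm:sparse-symm-G}: I will show that a signal $\t_0$ drawn from either generative model belongs, with high probability in $\t_0$, to the class of generic $s$-sparse symmetric signals to which Theorem~\ref{thm:sparse-symm-G} applies. Once this membership is in hand, the conclusion $\sqrt{n}\,\varrho(\tlt_n,\t_0)=O_p(\s^2)$ and the threshold $\s_0(L)$ are inherited verbatim from the theorem. The entire task is therefore to instantiate, for each model, the data defining a generic sparse symmetric signal in Appendix~\ref{a:gen_signal}: the dispersion $\zeta^2$, the sparsity constants $(\a,\b)$, and the arithmetic conditions on the support, namely cosine genericity together with a positive index $\tau$.

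First I would dispose of the conditions on the signal \emph{values}, which are handled identically in both models since the nonzero entries are i.i.d.\ $N(0,\zeta^2)$. The dispersion is $\zeta^2$ by construction, and the constants $(\a,\b)$, which two-sidedly bound the active magnitudes, follow from standard Gaussian estimates: sub-Gaussian tails together with a maximal inequality give an upper bound $|\t_0(i)|\le\b\zeta$ across all active coordinates, while small-ball anti-concentration combined with a union bound over the at most $s$ active coordinates gives a matching lower bound $|\t_0(i)|\ge\a\zeta$, each with probability $1-o(1)$. These are routine small-ball and maximal-inequality computations, so I would only record the resulting constants.

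The heart of the matter is the arithmetic condition on the support, and here the two models diverge. For the symmetrized Bernoulli--Gaussian model (i), the positive support $A_+$ is formed by including each point of $\Z_L^+$ independently with probability $s/L$; a Chernoff bound pins the sparsity level at $|\supp(\t_0)|=(1+o(1))s$ with high probability. The substantive step is to show that such a random symmetric set is cosine-generic, with positive index $\tau$, with high probability. The natural route is a union bound over the ``bad'' arithmetic configurations that violate the condition: each such configuration involves only a bounded number of support points, so its probability carries a fixed power of $s/L$, and the sum of these probabilities against the number of configurations is $o(1)$ precisely in the window $\log^9 L\le s\le L/\log^5 L$. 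This is the main probabilistic content of the corollary, and I expect it to be the principal obstacle: one must calibrate the quantitative notion of cosine genericity so that the failure event is simultaneously strong enough to feed Theorem~\ref{thm:sparse-symm-G} and rare enough to be killed by the union bound uniformly across the entire range of $s$.

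For the deterministic model (ii), whose support is the fixed interval $[-s,s]$, there is no randomness in the support, so the arithmetic condition cannot be obtained from a union bound and must be checked directly. Here the difference multiset of $[-s,s]$ and the attendant cosine evaluations are completely explicit, so cosine genericity and the positivity of the index $\tau$ reduce to a deterministic computation with the closed form of the relevant cosine sums, valid throughout $s\le L/\log^5 L$. With the support condition established in either case and the value-level conditions verified above, a signal drawn from either model lies in the hypothesis class of Theorem~\ref{thm:sparse-symm-G} with high probability in $\t_0$, and the corollary follows at once.
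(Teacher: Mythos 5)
Your overall reduction is the same as the paper's: Corollary~\ref{cor:sparse-symm-BG} is deduced from Theorem~\ref{thm:sparse-symm-G} by showing that each generative model produces, with high probability, a support that is typically $s$-sparse and cosine generic with a positive index, and your treatment of model (ii) as an explicit, deterministic cosine-sum computation is exactly the paper's Lemma~\ref{lem:cosgen-symdet}. Two smaller corrections before the main point: in Definition~\ref{def:typical-sparse} the sparsity constants $(\a,\b)$ bound the \emph{support size} ($\a s\le|\Xi|\le\b s$), not the magnitudes of the active entries, so your small-ball/maximal-inequality step for the values addresses the wrong condition; the magnitude bounds $m_{\cT},M_{\cT}$ are hypotheses of Lemma~\ref{lem:symm-tech} and are verified inside the proof of Theorem~\ref{thm:sparse-symm-G}, hence they are not part of what this corollary must supply.

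The genuine gap is in your mechanism for the cosine genericity of the Bernoulli--Gaussian support, which is the heart of model (i) (the paper's Lemma~\ref{lem:cosgen-symBer}). Cosine genericity requires $\min_{\xi\in\Z_L}\cC(\Xi,\xi)\gtrsim s^{\tau}$, where $\cC(\Xi,\xi)=\ind_{\{0\in\Xi\}}+2\sum_{k\in\Xi\setminus\{0\}}\cos^2(2\pi \xi k/L)$. For a fixed frequency $\xi$, the failure event $\{\cC(\Xi,\xi)<\fb\}$ is the event that a sum of roughly $L/2$ independent nonnegative contributions is atypically \emph{small}; it depends on the entire random support and is not implied by any pattern involving a bounded number of support points. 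Consequently its probability is not ``a fixed power of $s/L$,'' and there is no finite family of bounded-size bad configurations to union over: your proposed union bound cannot be carried out as stated. The argument that works (and that the paper uses) is per-frequency concentration plus a union bound over frequencies: write $\cC(\Xi,\xi)$ as a sum of independent bounded variables $4\cos^2(2\pi \xi k/L)\,Y_k$ with $Y_k$ i.i.d.\ $\mathrm{Bernoulli}(s/L)$; lower bound $\E[\cC(\Xi,\xi)]\ge s/16$ uniformly in $\xi$ --- itself a nontrivial step, since the geometric-sum evaluation degenerates for $\xi$ within $O(1)$ of $0$ and $\pm L/2$ and must there be replaced by summing only over $|k|\le L/32$, where $\cos^2(2\pi\xi k/L)\ge 1/2$; then apply Bernstein's inequality (Lemma~\ref{lem:Hoeffding}) at scale $t=s/32$ to get failure probability $e^{-cs}$ per frequency, and union bound over the $L$ frequencies. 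This last step is where the lower end of the window enters: one needs $L e^{-cs}=o(1)$, i.e.\ $s\gg\log L$, which $s\ge\log^9 L$ supplies. The upper restriction $s\le L/\log^5 L$ plays no role in cosine genericity at all --- it is consumed by the uniform-uncertainty-principle step inside the proof of Theorem~\ref{thm:sparse-symm-G} --- so your claim that the union bound closes ``precisely in the window $\log^9 L\le s\le L/\log^5 L$'' misattributes where the two ends of the window are actually used.
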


\subsubsection{Dependence on sparsity and ambient dimension} \label{sec:sL-dependence}
In important signal classes, we can obtain the dependence of asymptotic estimation rates on the parameters $(s,L)$. We record them in the following result.

 
\begin{thm} \label{cor:dependence}
Let  $\t_0$ be the signal in the MRA model, and $\s$ bigger than a threshold $\s_0(L)$.
\begin{itemize}
\item[(i)]  If $\t_0$ is as in Definition \ref{defi:dilute} with $|\supp(\t_0)|=s$, then we have the $\lim_{n \to \infty}\sqrt{n}\varrho(\tlt_n,\t_0)/\sigma^2=O_p(1)$ as a function of $s,L$.
\item[(ii)]   If $\t_0$ is sampled from the symmetric Bernoulli-Gaussian distribution with mean 0, variance $\zeta^2$ and sparsity parameter $s$ with $\log^9 L \le s \le L/\log^5 L$, then with high probability in $\t_0$, we have $\lim_{n \to \infty} \sqrt{n} \varrho(\tlt_n,\t_0)/\sigma^2=O_p(s^{3.5})$ as a function of $s,L$.  
\end{itemize}
\end{thm}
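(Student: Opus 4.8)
The plan is to reduce Theorem \ref{cor:dependence} to a quantitative analysis of the curvature lower bounds that drive the restricted MLE rates, tracking explicitly how the relevant constants depend on $(s,L)$. The backbone is a standard asymptotic normality argument for the restricted MLE: once one establishes that the population log-likelihood has a strictly positive curvature (equivalently, a strictly positive-definite local Hessian in the quotient geometry induced by $\metric$) at the true orbit, the parametric $\sigma^2/\sqrt n$ rate follows, with the multiplicative constant governed by the inverse of that curvature. Thus the $(s,L)$-dependence of $\lim_{n\to\infty}\sqrt n\,\varrho(\tlt_n,\t_0)/\sigma^2$ is, up to universal factors, the \emph{reciprocal} of the curvature lower bound. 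The task is therefore to extract sharp curvature constants in each of the two regimes.

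For part (i), I would invoke Lemma \ref{lem:lbound-beltway} directly. That lemma yields a second-moment-tensor separation of the form
\[
\| \E_{G} [(G\t)^{\otimes 2}] -  \E_{G} [(G\t_0)^{\otimes 2}]  \|_F \ge \sqrt{\tfrac{2\eps}{2+\eps}} \cdot \tfrac{1}{\sqrt{L}} \cdot \sqrt{s} \cdot \rho(\t,\t_0),
\]
so the curvature in the $\metric$-metric is bounded below by a constant times $\sqrt{s}/\sqrt L$; passing to $\varrho=\rho/\sqrt L$ absorbs the $\sqrt L$ factor and leaves an $s$-independent effective curvature constant (the factor $\sqrt{2\eps/(2+\eps)}$ is bounded away from $0$ uniformly over $\cT$). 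After inverting to obtain the rate constant, the $s$ and $L$ dependence cancels, yielding the claimed $O_p(1)$. The only care needed is to confirm that the second-moment separation controls the full likelihood curvature in the dilute regime—i.e.\ that recovery from the second moment is both necessary and sufficient here, which is exactly the content underlying Theorem \ref{thm:rates-beltway}.

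For part (ii), the curvature no longer comes from the clean collision-free bound but from the uncertainty-principle machinery of the moderate regime that underlies Theorem \ref{thm:sparse-symm-G}. Here I would trace the worst constant through the nested probabilistic-method construction of the favorable frequency set and the uniform uncertainty principle: the relevant lower bound on the second-moment curvature degrades polynomially in $s$, and inverting it produces the rate constant. The target $O_p(s^{3.5})$ should emerge by combining (a) the number-theoretic loss in the density of a usable frequency set, which costs a polynomial power of $s$, with (b) the conditioning of the restricted linear system recovering Fourier moduli, whose smallest relevant singular value scales as a negative power of $s$; the exponent $3.5$ is the sum of these contributions together with the scaling of the signal's energy $\zeta^2 s$ across the support. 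The high-probability statement in $\t_0$ is inherited from the generative model: on the event that $\t_0$ is cosine-generic with the stated sparsity constants $(\a,\b)$ and index $\tau$, the curvature bound holds with the stated exponent.

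The main obstacle will be part (ii): establishing the \emph{sharp} exponent $3.5$ rather than merely \emph{some} polynomial power. The sub-obstacles are bounding below the smallest singular value of the moment-to-signal map uniformly over the generic event, and controlling the combinatorial cost incurred in the repeated applications of the probabilistic method—each nesting can inflate the constant, and one must show the cumulative inflation is only polynomial of the claimed degree. By contrast, part (i) is essentially immediate from Lemma \ref{lem:lbound-beltway}, since the $\sqrt s$ and $1/\sqrt L$ factors combine with the metric conversion to leave a dimension-free constant.
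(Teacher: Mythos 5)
The core missing ingredient in your argument is the degrees-of-freedom factor that converts a curvature bound into a rate. In both parts, the paper passes from curvature to rate via asymptotic normality: $\sqrt{n}\,(\tlt_n-\t_0)\to N(0,I(\t_0)^{-1})$, so the limit of $\sqrt{n}\,\|\tlt_n-\t_0\|_2$ is $\|I(\t_0)^{-1/2}\mathcal{Z}_k\|_2$ where $\mathcal{Z}_k$ is a standard Gaussian in dimension $k=|\supp(\t_0)|=s$, the local dimension of the parameter space. Under a curvature bound $I(\t_0)\succeq \lambda\,\mathrm{Id}_s$ this gives $\sqrt{\chi^2(s)}\,\lambda^{-1/2}$, \emph{not} $\lambda^{-1/2}$: inverting the curvature alone is off by a factor $\sqrt{s}$. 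This already breaks your bookkeeping in part (i). Converting Lemma \ref{lem:lbound-beltway} to the $\varrho$ metric via $\rho=\sqrt{L}\varrho$ yields $\|\Delta_2(\t,\t_0)\|_F\ge c\sqrt{s}\,\varrho(\t,\t_0)$, i.e.\ an $L$-independent but $\sqrt{s}$-\emph{growing} constant, not the ``$s$-independent effective curvature constant'' you assert. The paper's $O_p(1)$ in part (i) is exactly the cancellation $\sqrt{\chi^2(s)}/\sqrt{s}=O_p(1)$ between the dimension factor you omitted and the $\sqrt{s}$ curvature gain you erased (see the paper's use of Proposition \ref{prop:CurveLB-normality} with $K_1(\sigma)=Cs/(L\sigma^4)$, giving stochastic domination by $C_1\sqrt{\chi^2(s)/s}$). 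Your two misstatements compensate, so you reach the right conclusion in (i) for the wrong reason.

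In part (ii) the same omission is fatal to the exponent, and your proposal is in any case a plan rather than a derivation. The paper's mechanism is: by Lemma \ref{lem:cosgen-symBer} the symmetric Bernoulli-Gaussian support is $s/32$-cosine generic, so $\tau=1$; the frequency set built in the proof of Theorem \ref{thm:sparse-symm-G} carries the lower bound $\fm(\cT_s)=c\min\{s^{\tau-4},1\}$ (equation \eqref{eq:explicit-LB}, coming from the choice $\k=4-\tau$), whence \eqref{eq:curv_LB_moderate} gives $\|\Delta_2(\t,\t_0)\|_F\ge c\,s^{\tau-4}\|\t-\t_0\|_2/\sqrt{L}$ and $K_1(\sigma)=Cs^{2(\tau-4)}/(L\sigma^4)$. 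Inverting the curvature contributes $s^{4-\tau}=s^{3}$, and the dimension factor $\sqrt{\chi^2(s)}=O_p(\sqrt{s})$ supplies the remaining half power, giving $O_p(s^{4.5-\tau})=O_p(s^{3.5})$. Your proposed sources of the exponent---density loss of a usable frequency set, the smallest singular value of a ``moment-to-signal'' linear system, and the energy scaling $\zeta^2 s$---do not correspond to this mechanism: no singular-value analysis of a restricted linear system occurs in the argument, the polynomial loss $s^{-3}$ enters through the pointwise lower bound on $|\hht|$ over the chosen frequency set, and the half-integer part of the exponent cannot be produced at all without the $\chi^2(s)$ factor you never introduce. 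As written, the exponent $3.5$ remains underived in your proposal.
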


We emphasize here that our rate bounds are asymptotic in $n$, and therefore necessarily \textit{local} in character, focusing on a small enough neighbourhood of the signal (that will generally depend on $L$). In contrast,  non-asymptotic bounds that are generally \textit{global} over the set of allowable signals, and therefore will usually exhibit an $L$ dependence, such as the standard $\sqrt{\log L}$ dependence in much of the signal processing literature. On a related note, in this work we concern ourselves with the leading, dominant term in an asymptotic expansion of $\sqrt{n}\varrho(\tlt_n,\t_0)/\sigma^2$ in the regime of large $L$; higher order terms in this expansion will generically depend on $L$. 

We observe that the asymptotic upper bound in the dilute regime is $O_p(1)$, but in the regime of moderate sparsity it is growing as $s^{3.5}$. This is perhaps to be expected, in tune with the fact that a $\sigma^2$ dependence of the estimation rate eventually breaks down for signals with full support. 

It is of interest to make explicit the role of the ambient dimension $L$ as a quantifier in the main results of this paper. The main results, such as Theorems \ref{thm:sparse-symm-G}, \ref{cor:dependence} and \ref{thm:rates-beltway} and Corollary \ref{cor:sparse-symm-BG},  entail statements regarding generic signals. This genericity refers to the fact that the statements of these results hold for a set of signals that, under suitable distributions on the signal space (whose specifics are clarified for each result), has probability at least $1-p(L)$, where $p(L) \to 0$ as $L$ tends to infinity. While a precise bookkeeping of our arguments would indeed yield explicit formulae for such sequences $p(L)$ for the relevant distributions considered in this paper, we prefer not to pursue that route so as to maintain brevity, given that the statements in their present forms already capture the main qualitative phenomena and key dependencies.

Thus, our results are in particular not asymptotic in $L$: indeed, the results hold for each $L$ (bigger than some threshold $L_0$) for a class of signals $\mathcal{S}(L)$ that depends on $L$. As $L$ grows, the  probability measure of $\mathcal{S}$ under a natural distribution converges to 1. However, the results do have a clear interpretation even without letting $L \to \infty$, which is the point of view we take in this paper. In the recent work \cite{romanov2021multi}, the authors undertake an examination of the MRA problem in the \textit{high dimensional} regime, where $L,\s \to \infty$ jointly with the special parametric dependence $\s = L/\a \log L$ for a parameter $\a>0$. In such a situation, the order of the limits $n,L \to \infty$ becomes important. In the present work, we interpret the results as above -- with fixed $L$ and $n \to \infty$. Thus, the results in this paper are of a different flavour and indeed not comparable to the high dimensional scenario as in \cite{romanov2021multi}. A synthesis of these two points of views, however, remains an interesting question for future work.

\subsubsection{A technical lemma} \label{sec:tech_lem}

A useful ingredient in the proof of Theorem \ref{thm:sparse-symm-G} is a deterministic technical  lemma which we state below.

\begin{lem} \label{lem:symm-tech}
Consider the set of signals $\cT$ on $\Z_L$ in the standard enumeration \eqref{eq:std-par}, defined as follows.
For each $\t \in \cT$, we have
\begin{itemize}
\item[(i)] $\t$ is symmetric, i.e. $\t(i)=\t(-i) \forall i \in \Z_L$.
\item[(ii)] There exist $m_{\cT},M_{\cT}>0$, uniform in $\t$, such that $m_{\cT} \le |\t(i)| \le M_{\cT}$ on $\supp(\t)$.
\item[(iii)] There exists $\L \subset \Z_L$, possibly depending on $\t$, such that:
           \begin{itemize}
           \item[(a)] $$c_1 \cdot \frac{1}{L} \|h\|_2^2 \le \frac{1}{|\L|}\sum_{\xi \in \L} |\hh(\xi)|^2 \le c_2 \cdot \frac{1}{L} \|h\|_2^2 $$ for all $h$ with $\supp(h) \subseteq \supp(\t)$, with positive constants $c_1,c_2$ uniform in $h,\t$.
           \item[(b)] $\min_{\xi \in \L}|\hht(\xi)| \ge \fm(\cT)$, for some $\fm(\cT)>0$ that is uniform in $\t$.
           \end{itemize}
\end{itemize}
Then, for $\s$ bigger than a threshold $\s_0(L)$ and any signal $\t_0 \in \cT$, the restricted MLE $\tlt_n$ satisfies $\sqrt{n}{\displaystyle \varrho(\hht_n,\theta_0)=O_p(\sigma^2)}$  as $n \to \infty$.
\end{lem}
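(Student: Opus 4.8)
The plan is to derive the rate from a local curvature (Fisher information) lower bound, exploiting that in the high-noise regime the only non-degenerate contribution transverse to the orbit comes from the second moment, i.e. from the power spectrum. Write $M_2(\t) = \E_G[(G\t)^{\otimes 2}]$ for the second moment tensor. Since $G$ ranges uniformly over all cyclic shifts, $M_2(\t)$ is diagonalised by the Fourier basis with entries proportional to $|\hht(\xi)|^2$, so controlling $M_2$ is equivalent to controlling the power spectrum. The first moment $\E_G[G\t]$ is a multiple of the all-ones vector and hence records only the DC mode $\hht(0)$; its contribution to the Fisher information is of order $\sigma^{-2}$ but supported on the mean direction, whereas every other transverse direction is governed by $M_2$, whose Fisher contribution is of order $\sigma^{-4}$. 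The heart of the matter is to show that hypotheses (i)--(iii) keep this $\sigma^{-4}$ curvature bounded below, and symmetry is what makes the relevant subspace clean: because $\cT$ consists of symmetric signals, $\hht$ is real, the restricted MLE $\tlt_n$ is itself symmetric, and the orbit tangent at $\t_0$ (the discrete derivative of $\t_0$) has purely antisymmetric Fourier transform, so symmetric perturbations are automatically transverse to the orbit.

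First I would establish the transverse curvature estimate for the power-spectrum map $\t \mapsto (|\hht(\xi)|^2)_\xi$. Locally $\rho(\t,\t_0)=\|\t-\t_0\|_2$ for the trivial alignment, which is optimal up to the finitely many symmetric elements of the orbit, and condition (ii) forces $\supp(\t)=\supp(\t_0)$ whenever $\t$ is within distance $m_{\cT}$ of $\t_0$, so the perturbation $h=\t-\t_0$ is symmetric and supported on $\supp(\t_0)$. Differentiating the power spectrum at $\t_0$ gives, frequency by frequency,
\[
\tfrac{\partial}{\partial h}\,|\hht_0(\xi)|^2 = 2\,\hht_0(\xi)\,\hh(\xi).
\]
Restricting the frequency sum to $\Lambda$, then using the lower bound (iii)(b), $\min_{\xi \in \Lambda}|\hht_0(\xi)| \ge \fm(\cT)$, and finally the lower Parseval/RIP bound (iii)(a), yields
\[
\sum_{\xi} \big(2\,\hht_0(\xi)\,\hh(\xi)\big)^2 \ge 4\,\fm(\cT)^2 \sum_{\xi \in \Lambda} |\hh(\xi)|^2 \ge 4\,\fm(\cT)^2\, c_1\,\frac{|\Lambda|}{L}\,\|h\|_2^2 .
\]
This is a uniform transverse curvature bound for the power-spectrum map over all symmetric $h$ supported on $\supp(\t_0)$, and equivalently a lower bound $\|M_2(\t) - M_2(\t_0)\|_F \ge \kappa\,\rho(\t,\t_0)$ with $\kappa \asymp \fm(\cT)\sqrt{c_1 |\Lambda|/L}$, in the same spirit as Lemma~\ref{lem:lbound-beltway} but now driven by the frequency set $\Lambda$ rather than by collision-freeness.

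Second I would convert this into the statistical rate. For $\sigma>\sigma_0(L)$ the high-noise expansion of the Gaussian-mixture density \eqref{eq:density} identifies the Fisher information of $p_\t$ at $\t_0$, restricted to directions transverse to the orbit, as $\sigma^{-4} J(\t_0)$ up to relative corrections that are negligible once $\sigma_0(L)$ is large, where $J(\t_0)$ is the Gram operator of the second-moment derivative computed above; the $\sigma^{-2}$ first-moment term acts only on the mean direction and so, being faster, cannot degrade the transverse rate. The curvature bound of the previous paragraph gives $J(\t_0) \succeq 4\,\fm(\cT)^2 c_1\,(|\Lambda|/L)\,\mathrm{Id}$ on the transverse subspace. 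Feeding this into the classical local asymptotic normality / M-estimation analysis of the restricted MLE (the population objective being locally strongly convex transverse to the orbit with modulus $\gtrsim \sigma^{-4}\kappa^2$, against score fluctuations of the matching order), we obtain $\sqrt{n}\,\rho(\tlt_n,\t_0) = O_p(\sigma^2/\kappa)$, and dividing by $\sqrt{L}$ gives $\sqrt{n}\,\varrho(\tlt_n,\t_0) = O_p(\sigma^2)$, with the constants and the $|\Lambda|, L, \fm(\cT)$ dependence absorbed into $O_p$.

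The main obstacle I anticipate is the second step: rigorously isolating the $\sigma^{-4}$ term of the Fisher information, proving it is the governing (slowest) direction uniformly throughout a genuine neighbourhood of $\t_0$, and correctly quotienting out both the group orbit and the residual symmetric symmetries (for instance a half-period shift when $L$ is even, and the reflection). The curvature computation of the first step is essentially algebraic once the hypotheses are in place; the delicate work is controlling the higher-moment remainders in the density expansion and verifying the regularity conditions needed for asymptotic normality of a \emph{restricted} MLE on the orbit quotient, which is precisely where the threshold $\sigma_0(L)$ must be chosen.
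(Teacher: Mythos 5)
Your proposal is correct and follows essentially the same route as the paper's proof: a Fourier-domain curvature bound for the second-moment difference, obtained by restricting the frequency sum to $\L$ and applying (iii)(b) followed by the lower bound in (iii)(a), and then conversion to the $O_p(\sigma^2)$ rate via the $\sigma^{-4}$ KL-curvature estimate and asymptotic normality of the restricted MLE --- which is exactly the content of Propositions \ref{prop:linear_LB} and \ref{prop:CurveLB-normality}, so the ``main obstacle'' you flag is already disposed of by results stated earlier in the paper. The only point you gloss over is the quadratic-in-$h$ term $\frac{1}{L}\cM(h\ast h)$ in the exact second-moment difference (Lemma \ref{lem:2nd-moment}): passing from your linearized (derivative) bound to a bound on $\|\Delta_2(\t,\t_0)\|_F$ itself requires controlling this remainder, which the paper does explicitly by taking the neighbourhood radius $r(L)$ small enough.
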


\begin{rem} \label{rem:only_sparsity}
A significant setting in the context of the conditions (i)-(iii) above is when the signal class $\cT$ is sparse with typical support size $s$, and $m_{\cT},M_{\cT},\fm(\cT)$ as well as the constants $c_1,c_2$ all depend only on the sparsity $s$ and not on $L$. This is, in fact, true for both the sparse symmetric Bernoulli-Gaussian distribution and the $N_{[-s,s]}^{\mathrm{symm}}(0,\zeta^2I)$ distributions that we consider in this paper, and lead to estimation rates that depend asymptotically only on the sparsity and not on the ambient dimension. 
\end{rem}

\begin{rem} \label{rem:det_vs_gen}
We observe that $\cT$ in Lemma \ref{lem:symm-tech} is a deterministic subset of the space of signals, whereas  Theorem \ref{thm:sparse-symm-G} and Corollary \ref{cor:sparse-symm-BG} consider a typical signal from certain generative models. We can, however, easily reconcile the two by considering a large, compact set $\cT$ of signals that carries a high probability measure under the respective generative distribution and satisfies the conditions of Lemma \ref{lem:symm-tech}. This allows us to deduce improved rates of estimation for \textit{typical} signals under generative distributions by making use of the deterministic Lemma \ref{lem:symm-tech}.
\end{rem}

\subsubsection{Estimation rates beyond sparsity} \label{sec:slow-support}
Theorems \ref{thm:rates-beltway} and \ref{thm:sparse-symm-G} establishes a sample complexity of order $\sigma^4$ for signals with sparse support. Complementary to Theorem \ref{thm:rates-beltway}, in this section we examine classes of signals with very different structural properties compared to sparse support, and show that in such settings, ${\sigma^2}/{\sqrt{n}}$ rates of estimation is generically impossible. 

\begin{thm} \label{thm:impossibility}
Let $\mathcal{T}$ be the set of vectors $\t \in \R^L$ is such that its support $\supp(\t)$ is all of $\Z_L$ and $m \le |\t(i)| \le M$ on $\supp(v)$. Then, for any signal $\t_0 \in \mathcal{T}$ the restricted MLE $\hht_n$ satisfies ${\displaystyle \sqrt{n} \varrho(\hht_n,\theta_0)=\Omega_p(\sigma^3)}$ as $n \to \infty$.
\end{thm}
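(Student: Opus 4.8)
The plan is to exhibit, at every full-support $\t_0 \in \cT$, a direction transverse to the group orbit along which the first two moments of the observations are exactly preserved, so that the second-moment curvature that drives the $\sigma^2$ rate (the analogue of the bound in Lemma~\ref{lem:lbound-beltway}) is completely degenerate. In the high-noise regime the behaviour of the restricted MLE is governed by the curvature of the population log-likelihood transverse to the orbit, which admits a moment expansion in which the $k$-th moment tensor of $Y$ contributes to the Fisher information at order $\sigma^{-2k}$. A $\sigma^2/\sqrt n$ rate would require the second-moment ($k=2$) contribution to be bounded below in every transverse direction; I will show this fails for full support, which pushes the leading contribution down to the third moment at order $\sigma^{-6}$ and hence a rate no better than $\sigma^3/\sqrt n$.

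To build the degenerate direction I pass to Fourier space and use the identification of the second moment tensor $\E_G[(G\t)^{\otimes 2}]$ with the autocorrelation of $\t$, equivalently the power spectrum $\{|\widehat{\t}(\xi)|^2\}_{\xi}$. First I would define the phase-perturbation path by $\widehat{\t_t}(\xi) = e^{\,i t\, c(\xi)}\,\widehat{\t_0}(\xi)$, where $c$ is a real, odd function of the frequency --- which guarantees that each $\t_t$ is a real signal --- chosen so that $c$ is not proportional to $\xi$. The full-support hypothesis $\widehat{\t_0}(\xi)\neq 0$ for every $\xi$ is precisely what makes this path well defined and nontrivial. By construction $|\widehat{\t_t}(\xi)| = |\widehat{\t_0}(\xi)|$ for all $\xi$ and all $t$, so the entire power spectrum --- and therefore the first and second moment structure of the observations --- is \emph{exactly} preserved along the path. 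On the other hand, the infinitesimal shift orbit corresponds precisely to the choice $c(\xi)\propto\xi$, so taking $c$ non-linear makes the tangent vector $v := \tfrac{d}{dt}\t_t\big|_{t=0}$ transverse to the orbit, whence $\varrho(\t_t,\t_0)$ grows linearly in $t$. For $t$ small, $\t_t$ remains in $\cT$ (full support, entries in $[m,M]$) by continuity.

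With this path in hand, the lower bound follows from the degeneracy it exposes. Since the first and second moments are preserved, the $\chi^2$ divergence between $p_{\t_t}$ and $p_{\t_0}$ is controlled at leading order by the third-moment difference and is at most of order $t^2/\sigma^6$; equivalently, the Fisher information in the direction $v$ is $O(\sigma^{-6})$, while the transverse orbit distance satisfies $\varrho(\t_t,\t_0)\asymp t$. Feeding this degenerate curvature into the asymptotic (local asymptotic normality) analysis of the MLE --- the same machinery underlying the $O_p$ upper bounds elsewhere in the paper --- the fluctuation of $\hht_n$ in the direction $v$ is of order $\sigma^3/\sqrt n$, giving $\sqrt n\,\varrho(\hht_n,\t_0)=\Omega_p(\sigma^3)$. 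The same conclusion can be read off from a two-point Le Cam argument along $\t_t$ with $t\asymp\sigma^3/\sqrt n$: the two hypotheses are then statistically indistinguishable from $n$ samples, so no estimator, and in particular the MLE, can localise the orbit to better than $\sigma^3/\sqrt n$.

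The main obstacle is this last step --- converting the degeneracy of the second-moment map into a genuine $\Omega_p$ statement about the MLE in the non-standard, high-noise, group-quotient setting. This requires the high-$\sigma$ expansion of the likelihood together with care that the flat direction $v$ is not an artifact the group action removes, which is exactly what its transversality to the orbit guarantees. By comparison, checking that $c$ can be taken non-linear (which needs $L$ above a small threshold, so that the space of odd frequency functions has dimension at least two) and that the path $\t_t$ stays in the restricted class are routine.
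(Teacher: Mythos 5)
Your skeleton is the same as the paper's: exhibit a perturbation of $\t_0$ inside $\cT$ along which $\Delta_1$ and $\Delta_2$ vanish (at least to leading order), use the moment expansion of the KL divergence (Lemma~\ref{lem:BRW2} and Theorem~\ref{thm:BRW2}, packaged in the paper as Proposition~\ref{prop:quadratic_UB}) to conclude $D_{KL}(\t_0\|\t) \lesssim \|\t_0\|_2^4\,\varrho(\t,\t_0)^2/\sigma^6$ along it, hence $\sigma_{\min}(I(\t_0)) \le c(L)\sigma^{-6}$ as in \eqref{eq:degenerate-3}, and then convert this via the asymptotic normality \eqref{eq:asymp_normality} of the restricted MLE into $\sqrt{n}\,\varrho(\tlt_n,\t_0)=\Omega_p(\sigma^3)$. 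The paper's Lemma~\ref{lem:lbound-impossibility} builds the degenerate perturbation by prescribing the Fourier phases of $h$ so that the linear-in-$h$ part of $\Delta_2$ cancels (condition \eqref{eq:h_choice}); your phase flow $\widehat{\t_t}(\xi)=e^{itc(\xi)}\hht_0(\xi)$ is the integrated form of the very same choice --- the tangent vector $\hat{v}(\xi)=ic(\xi)\hht_0(\xi)$ satisfies \eqref{eq:h_choice} identically --- with the aesthetic bonus that it preserves the power spectrum, and hence $\Delta_2$, exactly rather than to first order.

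However, two steps fail as written. First, you assert that the hypothesis of the theorem is $\hht_0(\xi)\neq 0$ for every $\xi$. It is not: the support condition is in \emph{physical} space, and this does not constrain the Fourier support. For instance $\t_0(i)=1+\tfrac12\cos(2\pi i/L)$ lies in $\cT$ but has Fourier support $\{0,\pm 1\}$. What full physical support actually buys --- and this is how the paper uses it --- is that $\cT$ is locally full-dimensional at $\t_0$, so every small real perturbation $h$ keeps $\t_0+h$ in the class and its Fourier phases may be prescribed freely; for sparse classes this freedom is exactly what is destroyed (cf.\ the proof of Lemma~\ref{lem:lbound-beltway}, where $\supp(h)\subseteq\supp(\t_0)$ is forced). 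Second, your transversality requirement that $c$ not be proportional to $\xi$ is imported from the continuous phase-shift model. In this paper $\cG=\Z_L$ is a \emph{finite} group, the orbit of $\t_0$ is a finite set, and there is no infinitesimal shift direction: for small $t$ one has $\varrho(\t_t,\t_0)=\|\t_t-\t_0\|_2/\sqrt{L}\asymp t$ for \emph{any} nonzero tangent vector, linear $c$ included. This is not cosmetic: in the example above every odd $c$ on the Fourier support $\{0,\pm 1\}$ \emph{is} proportional to $\xi$ there, so a $c$ meeting your requirement does not exist, and your argument yields nothing for that signal even though the conclusion holds (a fractional rotation of the cosine component witnesses it). Both defects are repaired by dropping the nonlinearity requirement and taking any odd $c$ not identically zero on $\{\xi\neq 0:\hht_0(\xi)\neq 0\}$; this covers every nonconstant $\t_0\in\cT$. (The constant signal $\t_0=c\ind$, which has full physical support but Fourier support $\{0\}$, admits no such direction; there $I(\t_0)$ is genuinely singular and the asymptotic-normality step itself breaks down --- an edge case the paper's proof shares.)

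A smaller point: your closing Le Cam two-point remark proves a different statement, namely that no estimator can be accurate at both $\t_0$ and $\t_t$ simultaneously --- a minimax assertion over the pair --- whereas the theorem fixes $\t_0$ and claims the MLE's fluctuation at that very signal. The route that delivers the theorem as stated is the Fisher-information one you describe first, i.e.\ $\sigma_{\min}(I(\t_0))\le c(L)\sigma^{-6}$ combined with $\sqrt{n}(\tlt_n-\t_0)\to N(0,I(\t_0)^{-1})$, which is exactly what the paper executes.
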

This will follow from the following Lemma, to state which we need to introduce the following notation. For a signal $\phi=(\phi^{(1)},\ldots,\phi^{(L)}) \in \R^L$, we define the average
\[ \bp:=\frac{1}{L} \sum_{i=1}^L \phi^{(i)} \] 
\begin{lem} \label{lem:lbound-impossibility}
Let $\mathcal{T}$ be the set of vectors $\t \in \R^L$ is such that its support $\supp(\t)$ is all of $\Z_L$ and $m \le |\t(i)| \le M$ on $\supp(v)$. Then, there exist a sequence  $\{\t_k\}_{k >0} \subset \mathcal{T}$ such that $\rho(\t_k,\t_0) \to 0$ as $k \to \infty$, $\bt_k=\bt_0$, and we have
\begin{equation}  \label{eq:lb-imp}
\| \E_{G} [(G\t_k)^{\otimes 2}] -  \E_{G} [(G\t_0)^{\otimes 2}]  \|_F \le C \varrho(\t_k,\t_0)^2. 
\end{equation}
\end{lem}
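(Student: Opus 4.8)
The goal is to produce, for a fixed signal $\t_0$ with full support, a sequence $\{\t_k\}\subset\cT$ approaching $\t_0$ in the $\rho$-metric, preserving the mean $\bt_k=\bt_0$, along which the second-moment tensor is flat to \emph{second} order rather than first order. Since the orbit-averaged second-moment tensor $\E_G[(G\t)^{\otimes 2}]$ is, up to normalization, determined by the periodic autocorrelation (equivalently the power spectrum $|\hat\t|^2$), the plan is to find a family of perturbations $\t_k=\t_0+\eta_k$ that changes the \emph{phases} of the Fourier coefficients while keeping their moduli essentially fixed, so that the power spectrum is matched to first order in $\|\eta_k\|$. Concretely, I would write the perturbation in Fourier space and compute the difference of autocorrelations; the linear-in-$\eta_k$ term in $\||\hat\t_k|^2-|\hat\t_0|^2\|$ is controlled by $\mathrm{Re}(\overline{\hat\t_0}\,\hat\eta_k)$, so I would choose $\hat\eta_k$ to be (approximately) orthogonal to $\hat\t_0$ frequency-by-frequency, i.e. a purely ``rotational'' perturbation of each Fourier mode $\hat\t_0(\xi)\mapsto e^{i\delta_k(\xi)}\hat\t_0(\xi)$ with small angles $\delta_k(\xi)\to 0$.

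The key steps, in order, are as follows. First, I would recall the identity relating the averaged tensor to the autocorrelation, so that $\|\E_G[(G\t_k)^{\otimes2}]-\E_G[(G\t_0)^{\otimes2}]\|_F$ becomes (up to constants) $\||\hat\t_k|^2-|\hat\t_0|^2\|_2$, the $L^2$ distance between power spectra. Second, I would parametrize the perturbation as a pure phase modulation $\hat\t_k(\xi)=e^{i\delta_k(\xi)}\hat\t_0(\xi)$, chosen to respect realness (so $\delta_k(-\xi)=-\delta_k(\xi)$) and to preserve the mean (so $\delta_k(0)=0$, forcing $\hat\t_k(0)=\hat\t_0(0)$, i.e. $\bt_k=\bt_0$). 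For such a modulation the power spectrum is \emph{exactly} unchanged, $|\hat\t_k(\xi)|^2=|\hat\t_0(\xi)|^2$, so the right-hand tensor difference is \emph{zero}; this already gives the bound $\le C\varrho(\t_k,\t_0)^2$ trivially provided $\varrho(\t_k,\t_0)>0$. Third, I must check $\t_k\in\cT$: because $|\hat\t_k(\xi)|=|\hat\t_0(\xi)|$, the values $\t_k(i)$ are a small perturbation of $\t_0(i)$ as $\delta_k\to 0$, so the magnitude constraints $m\le|\t_k(i)|\le M$ and full support persist for $k$ large by continuity; I would verify $\t_k$ is real (from the symmetry of $\delta_k$) and that $\rho(\t_k,\t_0)\to 0$ as the phases $\delta_k\to 0$. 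Fourth, since the tensor difference vanishes identically while $\varrho(\t_k,\t_0)>0$, the inequality \eqref{eq:lb-imp} holds with any constant $C$.

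There is a subtlety I must confront: a pure phase modulation makes the left-hand side exactly $0$, which trivially satisfies a $\le C\varrho^2$ bound but may feel degenerate, and one must ensure $\t_k\neq G\t_0$ for any $G$ (otherwise $\varrho=0$ and the sequence is not genuinely approaching along distinct orbits). The genuine content is therefore to exhibit phase perturbations that are \emph{not} realized by any cyclic shift $G$, i.e. $\delta_k(\xi)$ not of the form $2\pi\ell\xi/L$; generically such $\delta_k$ exist (the set of shift-induced phase vectors is a finite set, while admissible small $\delta_k$ form a continuum), so I would simply pick $\delta_k$ in the complement and tending to $0$. Hence the autocorrelation is matched exactly, $\varrho(\t_k,\t_0)>0$ strictly, and the quadratic bound follows.

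The main obstacle is the bookkeeping around degeneracy and the orbit distance: I must rule out the trivial case $\t_k\in\{G\t_0\}$ so that $\varrho(\t_k,\t_0)>0$, and I must confirm that the constructed $\t_k$ genuinely lie in $\cT$ (real-valued, full support, magnitudes in $[m,M]$) for large $k$, which requires a continuity argument in the phase angles $\delta_k$. The computation that the power spectrum, and hence the averaged second-moment tensor, is invariant under phase modulation is the crux, and it is this exact invariance---rather than a delicate second-order cancellation---that yields \eqref{eq:lb-imp}; the remaining work is verifying admissibility and non-triviality of the sequence, which I expect to be the genuinely careful part of the argument.
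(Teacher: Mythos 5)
Your proposal is correct, but it takes a genuinely different (though closely related) route from the paper. The paper works additively: it sets $h=\t-\t_0$, expands $\Del_2(\t,\t_0)$ via Lemma \ref{lem:2nd-moment} into a linear term $\frac{1}{L}\left(\cM(\t_0\ast\check{h})+\cM(\check{\t}_0\ast h)\right)$ plus a quadratic term $\frac{1}{L}\cM(h\ast\check{h})$, and then chooses the \emph{phases} of $\hh$ frequency-by-frequency (with $\hh(0)=0$, which yields $\bt=\bt_0$) so that the linear term vanishes identically; the remaining quadratic term is bounded by $C_L\|h\|_2^2$ via Cauchy--Schwarz and Parseval, giving \eqref{eq:lb-imp}. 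Your construction is the exact, finite version of the paper's infinitesimal one: since $e^{i\delta}\hht_0\approx\hht_0+i\delta\hht_0$, the additive direction the paper selects is precisely the linearization of your phase modulation, but by rotating the phases outright you keep the power spectrum---and hence, by Lemma \ref{lem:Toeplitz}, the entire second-moment tensor---unchanged \emph{exactly}, making the left side of \eqref{eq:lb-imp} zero rather than merely $O(\varrho^2)$. This is a stronger conclusion, avoids the quadratic-remainder estimate entirely, and makes the connection to phase-retrieval non-uniqueness transparent; your explicit treatment of orbit-avoidance ($\t_k\neq G\t_0$ for all $G\in\cG$) is also a point the paper glosses over, and it genuinely matters because the application in Theorem \ref{thm:impossibility} requires $\varrho(\t_k,\t_0)>0$. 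Two caveats. First, your continuum-versus-finite-set argument needs $\hht_0$ to be supported at some nonzero frequency; in the degenerate case of a constant signal $\t_0$ (which does lie in $\cT$) phase modulation produces nothing, whereas the paper's additive construction still works there (for constant $\t_0$ any mean-zero $h$ kills the linear term automatically). Second, keeping $\t_k$ inside $\cT$ requires the strict inequalities $m<|\t_k(i)|<M$ to survive the perturbation, i.e.\ an implicit interiority assumption on $\t_0$---but the paper makes exactly the same assumption, so this is not a gap relative to the paper's own argument.
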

We may compare the statement of Lemma \ref{lem:lbound-impossibility} with that of Lemma \ref{lem:lbound-beltway}, and notice that the lower bound in Lemma \ref{lem:lbound-beltway} is linear in $\varrho(\t,\t_0)$; whereas the upper bound in Lemma \ref{lem:lbound-impossibility} is quadratic in $\varrho(\t,\t_0)$.

In summary, attaining improved rates of estimation via the second moment is not possible when the signal has full support.

\subsection{Main ideas and ingredients} \label{sec:ideas}
Our investigation of the asymptotic estimation rates for the MRA model connects to several classical topics in applied mathematics, including  the beltway problem related to combinatorial optimization and uniform uncertainty principles from harmonic analysis.


\subsubsection{The beltway problem}
The beltway problem consists in recovering a set $\cS$ of numbers from their pairwise differences $\cD$, up to the trivial symmetry of translating all the numbers in the set by the same amount. It is closely related the so-called \textit{turnpike problem} or the \textit{partial digest problem}, and is of interest in computational biology, where it arises naturally in DNA restriction site analysis among other problems.
A set of integers is said to be \emph{collision-free} if all the pairwise distances obtained from that set are distinct.
In 1939, Piccard ~\cite{Pic39} conjectured that, if two sets $\cS_1$ and $\cS_2$ of integers have the same set of pairwise differences $\cD$, and the pairwise differences are known to be unique (i.e., $\cS_1$ and $\cS_2$ are collision-free), then the sets $\cS_1$ and $\cS_2$ must be translates of each other.

Following major advances by Bloom ~\cite{Blo77}, a description of the complete landscape of Piccard's conjecture was obtained by Bekir and Golomb ~\cite{BekGol07,bekir2004nonexistence}, who demonstrated in particular that the conjecture is true for all sets of cardinality greater than 6.

The upshot of these developments  is that if $\cS$ is a set of integers with $|\cS| \ge 7$ and such that the pairwise distances of the numbers in $\cS$ are distinct (in other words, $\cS$ is collision-free), then the set $\cS$ is uniquely determined (up to translations) by its pairwise distances. This will be exploited in our investigations of the MRA estimation rates. In particular, the beltway problem motivates our definition of the ``collision-free'' condition on the support of the signal, which will be used for obtaining improved estimation rates in the dilute regime of sparsity.

\subsubsection{Uniform Uncertainty Principles:}
Uncertainty principles have a long history in harmonic analysis and in applied mathematics, starting from Heisenberg's celebrated Uncertainty Principle in quantum mechanics \cite{FolSit97,Fef83}. Roughly speaking, these entail that a function cannot both be simultaneously localized (i.e., have a `small support' in an appropriate sense) both in the physical space and the Fourier space. This would imply that for a function with a small support in the physical space (e.g., a sparse signal), the Fourier transform would be overwhelmingly non-vanishing, and therefore we would need almost all of the Fourier coefficients in order to fully capture the `energy' (i.e. the $L^2$ norm) of the function, by the Parseval-Plancherel Theorem.

However, if our goal is to approximate the function (up to a limited multiplicative error in the total energy), it may actually suffice to work with a relatively small subset of Fourier coefficients. In fact, such an appropriate set of Fourier coefficients may be obtained via random sampling, and furthermore, such a `good' set of frequencies may be shown to provide a good approximation simultaneously for all sparse signals. Results in this vein are referred to as \emph{uniform uncertainty principles}; for an expository account we refer the reader to ~\cite{Tao08} (Chap. 3.2). These are also closely related to the so-called \emph{Restricted Isometry Property} (RIP) for random sub-sampling of Fourier matrices (c.f., \cite{RudVer08}).


\subsection{Connection to Phase Retrieval} \label{sec:Phase_Retrieval}

\subsubsection{Generalities} \label{sec:PR-gen}
Phase retrieval is a central and long-standing question in applied mathematics that find applications in a variety of domains such as astronomy, electron microscopy and optical imaging, and has emerged in recent years as a widely studied question in the field of signal processing ~\cite{She15,Can15,Fie13}.

A key connection between the MRA model and the phase retrieval problem arises via second moment tensors. Theorem \ref{thm:BRW2} (Theorem 9, \cite{BanRigWee17}), as well as the related work \cite{PerWeeBan19}, establishes a clear connection between the $O(\sigma^4)$ sample complexity in the MRA problem and being able to recover the true signal from (estimates of) its second moment tensor,  via an expansion of the Kullback-Leibler divergence in terms of moment tensors. The second moment tensor of a signal $\t$ is a circulant matrix related to the convolution $\t \star \ct$, whose Fourier transform $\widehat{\t \star \ct} = |\hht|^2$ as functions. 

The problem of (Fourier) phase retrieval entails signal recovery from the modulus of its random linear measurements. This problem has been well-investigated in  recent years, as indicated by a substantial literature ~\cite{bendory2017fourier,ohlsson2014conditions,beinert2017sparse,jaganathan2012recovery,jaganathan2017sparse,ranieri2013phase}. 

The recent work \cite{bendory2020toward} examines the question of recovering a sparse signal from its power spectrum in the context of crystallographic  phase retrieval in a non-randomized setting. The connection of such questions to the turnpike problem was considered in the earlier work \cite{ranieri2013phase}. The paper \cite{romanov2021multi} considers the sample complexity of MRA in high dimensions (under a Gaussian prior), exploring in particular the interplay between the dimension and the noise level.   
 
However, as noted earlier, our observational setting in the MRA model with the latent group action has a very different and more complicated structural setup than the existing literature on phase retrieval, which largely focuses on a specific setting akin to compressed sensing with limited information.

The present work focusses on statistical rates of estimation, leaving aside the question of algorithmic implementation for future work. The elaborate literature on phase retrieval, on the other hand, makes a detailed exploration of algorithmic issues, which might be of natural interest in this regard. 

\subsubsection{Crystallographic Phase Retrieval}

The results and methods in the present work have applications to phase retrieval, in particular the problem of crystallographic phase retrieval. The latter problem is believed to be perhaps the most important phase retrieval setup, where the interest is in recovering a sparse signal from its \textit{power spectrum}, i.e., the magnitude of its Fourier transform at various frequencies   \cite{elser2018benchmark,bendory2020toward}. This is equivalent to recovering a signal from its \textit{periodic auto-correlation}. The problem is motivated by X-ray crystallography, a technique for determining molecular structure \cite{millane1990phase}. 

Formally, let $\t_0$ be an $s$-sparse signal in ambient dimension $L$. In crystallographic phase retrieval, we are interested in recovering $\t_0$ from the magnitude of its discrete Fourier transform, namely $\{|\hht_0(j)|^2\}$ at measurement frequency $j$. Equivalently, the interest is in recovering $\t_0$ from its periodic auto correlations, which are given by \[ \mathcal{A}_{\t_0}(l) = \sum_{i \in \Z_L} \t_0(i) \t_0(i+l \; \text{mod} L) \quad \text{for} \; i \in \Z_L. \]
Clearly, the power spectrum of a signal remains invariant under rotations and reflections, and objective is to recover the signal upto these \textit{intrinsic} symmetries.

The theoretical understanding of crystallographic phase retrieval is rather limited (c.f. \cite{ranieri2013phase,bendory2020toward}). In particular, even the fundamental question of uniqueness is poorly understood.   
In \cite{bendory2020toward}, conjectures were laid out regarding the uniqueness of sparse signal recovery from its power spectrum, that predicted in particular that under general conditions unique recovery should be possible when $s/L \le 1/2$.

Our investigations in this paper have implications for the crystallographic phase retrieval problem, indicating local uniqueness of signal recovery from power spectrum for sparse collision-free signals (i.e., in the dilute regime of sparsity) and for generic sparse symmetric signals with $s=O(L/\log^5 L)$ (i.e., in the moderate regime of sparsity). 

This follows from the fact that our $\sqrt{n} \varrho(\tlt_n,\t_0) = O_P(\s^2)$ estimation rates for such signals are a consequence of lower bounds on the second moment difference tensors, such as in Lemma~\ref{lem:lbound-beltway} and  \eqref{eq:curv_LB_moderate}. The second moment tensor $\E_{\cG}[ (g \cdot \t)^{\otimes 2} ]$ is a  matrix whose entries are precisely the periodic auto correlations $\mathcal{A}_\t$, so lower bounds such as Lemma \ref{lem:lbound-beltway} and  \eqref{eq:curv_LB_moderate} indeed demonstrate unique recovery of $\t$ from $\mathcal{A}_\t$. The uniqueness is \textit{local}, because our estimation rates are local in character, which entails that lower bounds such as \eqref{eq:curv_LB_moderate} hold in a neighbourhood of the true signal $\t_0$. Further, the uniqueness is among a class of signals that share similar sparsity features as the true signal, e.g. the signal class  $\cT$ in Lemma~\ref{lem:lbound-beltway}. Application of the techniques of the present work to obtain more extensive uniqueness guarantees for the crystallographic phase retrieval problem is an interesting problem for future research.

%
%
%

\section{Rates of estimation and Curvature of the KL divergence} \label{sec:Curv}
\subsection{Estimation Rates and Curvature} \label{sec:est_rates_curv}
In this work, we establish, under very general conditions, quadratic rates  of estimation (i.e., scaling as $\sigma^2$) in the MRA problem, in the context of Theorem \ref{thm:lower} (and \eqref{eqn:minimax_lower} in particular).

Our point of departure is the population risk of the MRA model, given by
\begin{equation}
R(\t)=-\E_{p_{\t_0}}[\log p_\t(Y)] + C,
\end{equation}
where $C$ is a universal constant. Clearly, we have
\begin{align*}
R(\t)= &-\int \log p_\t(y) p_{\t_0}(y) \ud y + C  \\ = &\int \log \left( \frac{p_{\t_0}(y)}{p_{\t}(y)} \cdot \frac{1}{p_{\t_0}(y)} \right) p_{\t_0}(y)\ud y + C \\ = &D_{KL}(p_{\t_0} || p_{\t} ) - \left(\int p_{\t_0}(y) \log p_{\t_0}(y) \ud y \right) + C
\end{align*}
where $D_{KL}(p_{\t_0} || p_{\t} )$ is the Kullback-Leibler divergence between $p_{\t_0}$ and $p_{\t}$. Since $\t_0$ is fixed, as a function of $\t$, the population risk $R(\t)$ equals
\begin{equation}
R(\t)=D_{KL}(p_{\t_0} || p_{\t} ) + C(\t_0),
\end{equation}
where $C(\t_0)$ is a function of $\t_0$.

The Fisher information matrix of the MRA model is given by
\begin{equation} \label{eq:Fisher}
I(\t_0)= -\E[\nabla_\t^2 \log p_\t(Y)\big|_{\t=\t_0}]=\nabla_\t^2 R(\t_0),
\end{equation}
where $\nabla_\t^2$ denotes the Hessian with respect to the variable $\t$. It has been demonstrated  \cite{ABL18} that the MLE $\tlt_n$ is an asymptotically consistent estimate for the true signal $\t_0$ in the MRA model. This immediately enables us to invoke standard asymptotic normality theory for maximum likelihood estimators and conclude that: 
\begin{equation} \label{eq:asymp_normality}
\sqrt{n}(\tlt-\t_0) \quad \text{is asymptotically normal with mean 0 and variance} \quad I(\t_0)^{-1}. 
\end{equation}
From the considerations above, we may conclude that the asymptotic covariance is given by $\left(\nabla_\t^2 D_{KL}(p_{\t_0}||p_\t)\right)^{-1}$. For a detailed discussion on such asymptotic normality, we refer the reader to \cite{VdV00}, in particular Sections 5.3 and 5.5 therein.

We observe that the probability distribution $p_\t$ as well as  $D_{KL}(p_\t \| p_\phi)$ are invariant under the action of $\cG$, i.e., invariant under the transformations $\t \mapsto G \cdot \t$ for $G \in \cG$. As a result, for $\varrho(\t,\t_0)$ small enough (equivalently, $\|\t-\t_0\|_2$ small enough), we may assume without loss of generality that $\varrho(\t,\t_0)= \frac{1}{\sqrt{L}}\|\t-\t_0\|_2$ (c.f., \cite{BanRigWee17}; esp. the proof of Theorem 4 therein). Since $\|\tlt_n-\t_0\|_2 \to 0$ as $n \to \infty$, this will be true for $\varrho(\tlt_n,\t_0)$ with high probability.  

The upshot of the asymptotic normality discussed above is that, as $n \to \infty$, the quantity $\rho(\tlt_n,\t_0)$ (which equals $\frac{1}{\sqrt{L}}\|\tlt_n-\t_0\|_2$ with high probability), is  of the order  \[ n^{-1/2} \sqrt{\mathrm{Tr}\left[\frac{1}{L} \cdot I(\t)^{-1} \right]}= n^{-1/2} \sqrt{\mathrm{Tr}\left[\frac{1}{L} \cdot \nabla_\t^2 D_{KL}(p_{\t_0}||p_\t)^{-1} \right]},\]
where $\mathrm{Tr}[\cdot]$ denotes the trace. This is related to the fact that if $\mathbb{X} \sim N(0,\Sigma)$, then $\E[\|\mathbb{X}\|_2^2]=\E[\mathrm{Tr}(\mathbb{X}^* \mathbb{X})]=\mathrm{Tr}(\Sigma)$. 

Thus, the estimation rate for the MRA problem asymptotically depends on $\sigma$ via the dependence of $\sqrt{\mathrm{Tr}[\frac{1}{L} \cdot \nabla_\t^2 D_{KL}(p_{\t_0}||p_\t)^{-1}]}$ on $\sigma$. In view of this, curvature  bounds on $D_{KL}(p_{\t_0}||p_\t)$ assume significance. We record this in the following proposition.

To this end, we recall the metric $\rho(\cdot,\cdot)$ \eqref{eq:def_rho}, which is essentially a scaling of $\varrho$: indeed, $\rho(\cdot,\cdot) = \sqrt{L} \varrho(\cdot,\cdot)$.

\begin{prop} \label{prop:CurveLB-normality}
We have the following relations between curvature bounds on $D_{KL}$ and the asymptotic behaviour of $\varrho(\tlt_n,\theta_0)$:
\begin{itemize}
\item[(i)] If $D_{KL}(p_{\t_0}||p_\t) \ge K_1(\sigma) \rho(\t,\t_0)^2$ for $\t$ in a neighbourhood $U$ of $\t_0$, then $\sqrt{n} \varrho(\tlt_n,\theta_0)=O_p \left(\frac{K_1(\sigma)^{-1/2}}{\sqrt{L}} \right)$.
\item[(ii)] If $D_{KL}(p_{\t_0}||p_\t) \le K_2(\sigma) \rho(\t,\t_0)^2$ for $\t$ in a neighbourhood $U$ of $\t_0$, then $\sqrt{n} \varrho(\tlt_n,\theta_0)=\Omega_p\left(\frac{K_2(\sigma)^{-1/2}}{\sqrt{L}} \right)$.
\end{itemize}
\end{prop}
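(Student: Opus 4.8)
The plan is to connect curvature bounds on the KL divergence directly to the asymptotic distribution of the MLE via the Fisher information, using the relation $I(\t_0) = \nabla_\t^2 D_{KL}(p_{\t_0}\|p_\t)\big|_{\t=\t_0}$ established in \eqref{eq:Fisher}. Recall from the asymptotic normality \eqref{eq:asymp_normality} that $\sqrt{n}(\tlt_n - \t_0)$ converges in distribution to $N(0, I(\t_0)^{-1})$, so that $n \cdot \E\|\tlt_n - \t_0\|_2^2 \to \mathrm{Tr}(I(\t_0)^{-1})$, and hence $\sqrt{n}\,\varrho(\tlt_n,\t_0) = \sqrt{n}\,\tfrac{1}{\sqrt{L}}\|\tlt_n - \t_0\|_2$ is asymptotically of order $\sqrt{\tfrac{1}{L}\mathrm{Tr}(I(\t_0)^{-1})}$, as spelled out in the preceding discussion. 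The entire task is therefore to translate the scalar quadratic bounds on $D_{KL}$ into corresponding bounds on $\mathrm{Tr}(I(\t_0)^{-1})$.

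For part (i), I would argue as follows. The hypothesis $D_{KL}(p_{\t_0}\|p_\t) \ge K_1(\sigma)\,\rho(\t,\t_0)^2$ for $\t$ in a neighborhood $U$ of $\t_0$, together with $D_{KL}$ vanishing to first order at its minimizer $\t_0$ (since $\t_0$ minimizes $R(\t)$ and $D_{KL}(p_{\t_0}\|p_{\t_0})=0$), gives a lower bound on the Hessian in the directions transverse to the orbit of $\t_0$. Concretely, expanding $D_{KL}$ to second order in a transverse direction $v$ yields $\tfrac{1}{2}v^\top I(\t_0) v \ge K_1(\sigma)\|v\|_2^2$ after accounting for $\rho(\t_0+tv,\t_0)^2 = t^2\|v\|_2^2 + o(t^2)$ for $v$ orthogonal to the orbit (using that $\rho$ agrees locally with the Euclidean distance, as noted in Section \ref{sec:est_rates_curv}). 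Thus the eigenvalues of $I(\t_0)$ restricted to the transverse subspace are at least $2K_1(\sigma)$, so each eigenvalue of $I(\t_0)^{-1}$ there is at most $\tfrac{1}{2}K_1(\sigma)^{-1}$, giving $\mathrm{Tr}(I(\t_0)^{-1}) \lesssim K_1(\sigma)^{-1}$ (the trace being taken over the transverse directions, since the orbit direction is a flat direction that does not contribute to the estimable error). Plugging into the rate gives $\sqrt{n}\,\varrho(\tlt_n,\t_0) = O_p\!\left(\tfrac{K_1(\sigma)^{-1/2}}{\sqrt{L}}\right)$. Part (ii) is the mirror image: the upper bound $D_{KL} \le K_2(\sigma)\rho^2$ forces the transverse Hessian eigenvalues to be at most $2K_2(\sigma)$ in at least one direction, hence $I(\t_0)^{-1}$ has an eigenvalue of size at least $\tfrac{1}{2}K_2(\sigma)^{-1}$, yielding the matching $\Omega_p$ lower bound on the trace and thus on the rate.

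The main obstacle, and the point requiring the most care, is the presence of the flat direction along the group orbit of $\cG$. The Fisher information matrix $I(\t_0)$ is genuinely singular, since moving $\t_0$ infinitesimally along its orbit leaves $p_{\t_0}$ unchanged, so $I(\t_0)^{-1}$ does not literally exist and both the asymptotic normality statement and the trace formula must be interpreted on the quotient by the orbit (equivalently, on the transverse subspace orthogonal to the tangent of the orbit at $\t_0$). The rigorous fix is to note, as in Section \ref{sec:est_rates_curv} and \cite{BanRigWee17}, that $\varrho$ identifies points within an orbit, so all statements should be phrased for the effective parameter on the orbit space; the quadratic forms $\rho(\t,\t_0)^2$ and $\|v\|_2^2$ in the hypotheses should be read as restricted to transverse directions, and the trace and inverse of $I(\t_0)$ are understood as the pseudo-inverse on the transverse subspace. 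Once this reduction to the transverse subspace is made explicit, the eigenvalue comparison is routine linear algebra. I would therefore lead with a clean statement of this orbit-quotient reduction, then carry out the Hessian/eigenvalue comparison, and finally assemble the bound on $\mathrm{Tr}(I(\t_0)^{-1})$ and substitute into the asymptotic rate.
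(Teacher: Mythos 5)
Your core argument --- translating the quadratic bounds on $D_{KL}$ into a two-sided eigenvalue comparison for the Fisher information via $I(\t_0)=\nabla_\t^2 D_{KL}(p_{\t_0}||p_\t)\big|_{\t=\t_0}$, and then feeding that into the asymptotic normality $\sqrt{n}(\tlt_n-\t_0)\to N(0,I(\t_0)^{-1})$ --- is exactly the paper's proof. The paper phrases the final step as $\|I(\t_0)^{-1/2}\cZ_k\|_2 \le K_1(\sigma)^{-1/2}\|\cZ_k\|_2$ with $\cZ_k \sim N(0,\mathrm{Id}_k)$ rather than through $\mathrm{Tr}(I(\t_0)^{-1})$, but that is the same eigenvalue comparison (and note your trace bound should read $\mathrm{Tr}(I(\t_0)^{-1})\lesssim k\,K_1(\sigma)^{-1}$; the $\sqrt{k}$ factor is absorbed into the $O_p$, exactly as the paper absorbs $\|\cZ_k\|_2\sim\sqrt{\chi^2(k)}$). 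For (ii), your one-bad-direction observation suffices, matching the paper's terse ``the case of (ii) would be similar.''

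However, the ``main obstacle'' you identify, and the machinery you propose to fix it, rest on a misconception. In this paper $\cG$ is the \emph{finite} group of cyclic shifts of $\Z_L$: the orbit of $\t_0$ is a finite set of isolated points, there is no way to ``move $\t_0$ infinitesimally along its orbit,'' and $I(\t_0)$ is not singular on account of the group action. (Indeed, the paper's Theorem \ref{thm:impossibility} works with $\sigma_{\min}(I(\t_0))>0$, showing only that it can be as small as order $\sigma^{-6}$.) The correct, much lighter handling of the group --- which the paper uses and which you also cite in passing --- is that for $\|\t-\t_0\|_2$ small enough the minimum over $G$ in the definition of $\varrho$ is attained at the identity, so $\varrho(\t,\t_0)=\frac{1}{\sqrt{L}}\|\t-\t_0\|_2$ and $\rho(\t,\t_0)=\|\t-\t_0\|_2$ locally; no quotient space, transverse subspace, or pseudo-inverse is needed. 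The finite dimension $k$ entering the Hessian in the paper's proof is $k=|\supp(\t_0)|$, coming from the support constraint of the restricted MLE's signal class, not from quotienting out an orbit direction. If you carried out your fix literally --- projecting away a ``tangent of the orbit at $\t_0$'' --- you would be removing a direction that does not exist, and the dimension count (hence, e.g., the $\chi^2$ degrees of freedom tracked later in Theorem \ref{cor:dependence}) would come out wrong. Replacing that paragraph with the local identification of $\varrho$ with the scaled Euclidean norm, plus the restriction to the support coordinates, yields precisely the paper's proof.
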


We defer the proof of Proposition \ref{prop:CurveLB-normality} to Section \ref{sec:curv_prop}.

Curvature  bounds as in Proposition \ref{prop:CurveLB-normality} would, in particular, be implied by upper and lower bounds on $D_{KL}(p_{\t_0}||p_\t)$ in the form of $K(\sigma) \|\t-\t_0\|^2$ (valid on some neighbourhood $U$ of $\t_0$) -- in which case, the asymptotic estimation rate in the MRA problem would scale as $\frac{K(\sigma)^{-1/2}}{\sqrt{L}}\cdot \frac{1}{\sqrt{n}}$.

We introduce the difference of the $m$-th moment tensors corresponding to two signals $\t,\phi$:
\[\Delta_m(\t,\phi):= \E[(G\t)^{\otimes m}] -   \E[(G\phi)^{\otimes m}]. \] Furthermore, by the (Frobenius)  norm $\|\cdot\|$ for a tensor, we will denote its Hilbert Schmidt norm. In what follows, we will invoke two results from \cite{BanRigWee17}, in which we will make use of the distance $\rho$ (c.f. \ref{eq:def_rho}).

This allows us to state the following results from \cite{BanRigWee17}.
\begin{lem} \label{lem:BRW2}~\cite{BanRigWee17}[Lemma 8]
If $\tilde{\theta}=\t-\E_\cG[G\t]$ and $\tilde{\phi}=\phi-\E_\cG[G\phi]$, then 
\[D_{KL}(p_\t || p_\phi) = D_{KL}(p_{\tilde{\theta}} || p_{\tilde{\phi}}) + \frac{1}{2\s^2} \|\Delta_1(\t,\phi)\|^2.  \]
\end{lem}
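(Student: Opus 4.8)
The plan is to exploit the translation structure of the two mixture densities. Write $\mu_\t := \E_\cG[G\t]$ and $\mu_\phi := \E_\cG[G\phi]$, so that $\tlt = \t - \mu_\t$ and $\tlp = \phi - \mu_\phi$. The first thing I would record is that $\mu_\t$ lies in the $\cG$-invariant subspace: for the group of cyclic shifts, $\mu_\t = \bt\,\mathbf{1}$ is a constant vector, so $R\mu_\t = \mu_\t$ for every $R \in \cG$. Consequently $R\t = R\tlt + \mu_\t$, and substituting this into the mixture density \eqref{eq:density} (which only reshuffles the squared norm as $\|y - R\t\|^2 = \|(y-\mu_\t) - R\tlt\|^2$) shows that the two densities differ from their centered counterparts by a deterministic translation:
\[ p_\t(y) = p_{\tlt}(y - \mu_\t), \qquad p_\phi(y) = p_{\tlp}(y - \mu_\phi). \]

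Next I would insert these identities into the definition of the KL divergence and change variables $z = y - \mu_\t$. Setting $\delta := \mu_\t - \mu_\phi = \Delta_1(\t,\phi)$, this gives $D_{KL}(p_\t\|p_\phi) = \int p_{\tlt}(z)\big[\log p_{\tlt}(z) - \log p_{\tlp}(z+\delta)\big]\,\ud z$, so that
\[ D_{KL}(p_\t\|p_\phi) - D_{KL}(p_{\tlt}\|p_{\tlp}) = \int p_{\tlt}(z)\big[\log p_{\tlp}(z) - \log p_{\tlp}(z+\delta)\big]\,\ud z. \]
It then remains to show this last integral equals $\tfrac{1}{2\s^2}\|\delta\|^2$.

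To evaluate it I would expand the log mixture density. Writing $\|z - R\tlp\|^2 = \|z\|^2 - 2\lg z, R\tlp\rg + \|\tlp\|^2$ and factoring out the terms independent of $R$, one obtains $\log p_{\tlp}(z) = -\tfrac{\|z\|^2 + \|\tlp\|^2}{2\s^2} - \tfrac{L}{2}\log(2\pi\s^2) + \log W_{\tlp}(z)$, where $W_{\tlp}(z) := \frac{1}{|\cG|}\sum_{R}\exp\!\big(\lg z, R\tlp\rg/\s^2\big)$ collects the mixture weights. The crucial observation is that $W_{\tlp}$ is invariant under the shift by $\delta$: since $\delta$ is a constant vector and $\tlp = \phi - \bp\,\mathbf{1}$ is centered (its coordinate sum vanishes), we have $\lg \delta, R\tlp\rg = (\bt - \bp)\sum_i (R\tlp)_i = 0$ for every $R$, whence $W_{\tlp}(z+\delta) = W_{\tlp}(z)$. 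Therefore the log-weight terms cancel and only the Gaussian envelope contributes, giving $\log p_{\tlp}(z) - \log p_{\tlp}(z+\delta) = \tfrac{\lg z, \delta\rg}{\s^2} + \tfrac{\|\delta\|^2}{2\s^2}$.

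Finally I would integrate this against $p_{\tlt}$. The mean of $p_{\tlt}$ equals $\E_\cG[G\tlt] + \s\,\E[\xi] = 0$, again because $\tlt$ is centered, so the linear term $\tfrac{1}{\s^2}\lg \E_{p_{\tlt}}[Z], \delta\rg$ vanishes and the integral collapses to $\tfrac{1}{2\s^2}\|\delta\|^2 = \tfrac{1}{2\s^2}\|\Delta_1(\t,\phi)\|^2$, as claimed. The hard part, and the only place where anything beyond routine Gaussian algebra enters, is establishing the two invariance facts that drive the cancellation: that $\mu_\t$ is fixed by the group (so that the shift $R\t = R\tlt + \mu_\t$ lands entirely on the Gaussian centers rather than the mixture weights) and that $\lg\delta, R\tlp\rg = 0$ (so that the shift leaves $W_{\tlp}$ unperturbed). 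Both reduce to the single structural fact that $\delta$ lies in the invariant subspace spanned by $\mathbf{1}$, while the centered signals $\tlt, \tlp$ are orthogonal to it.
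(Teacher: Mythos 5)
Your proof is correct, and every step checks out: $\E_\cG[G\t]=\bt\,\ind$ is fixed by every cyclic shift, which gives the translation identity $p_\t(y)=p_{\tlt}\bigl(y-\E_\cG[G\t]\bigr)$; the change of variables reduces the difference of the two KL divergences to $\int p_{\tlt}(z)\bigl[\log p_{\tlp}(z)-\log p_{\tlp}(z+\delta)\bigr]\,\ud z$ with $\delta=\Delta_1(\t,\phi)=(\bt-\bp)\ind$; since cyclic shifts permute coordinates and $\tlp$ has vanishing coordinate sum, $\lg \delta, R\tlp\rg=0$ for every $R$, so the mixture-weight factor $W_{\tlp}$ is invariant under the shift by $\delta$ and only the Gaussian envelope contributes; and the linear term integrates to zero because $\E_{p_{\tlt}}[Z]=\E_\cG[G\tlt]=0$. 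Be aware, however, that the paper contains no proof of this statement to compare against: it is imported verbatim from \cite[Lemma~8]{BanRigWee17}, so what you have produced is a self-contained reconstruction of a cited result. Relative to the argument in that source, which decomposes $\R^L$ orthogonally into $\spann(\ind)$ and its complement, observes that the group acts trivially on the first factor and preserves the second, and then uses additivity of KL across the resulting independent components (the $\tfrac{1}{2\s^2}\|\Delta_1(\t,\phi)\|^2$ term arising as the KL divergence between two one-dimensional Gaussians with means $\sqrt{L}\,\bt$ and $\sqrt{L}\,\bp$), your route trades the independence/factorization step for an explicit cancellation of the mixture weights. Both arguments ultimately rest on exactly the two structural facts you isolate at the end: that $\delta$ lies in the $\cG$-invariant line $\spann(\ind)$, and that the centered signals $\tlt,\tlp$ are orthogonal to it.
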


\begin{thm} \label{thm:BRW2}~\cite{BanRigWee17}[Theorem 9]
Let $\t,\phi \in \R^L$ satisfy $3\rho(\t,\phi) \le \| \t \| \le \sigma$ and $\E[G\t]=\E[G\phi]=0$. Let $\Delta_m = \Delta_m(\t,\phi)= \E[(G\t)^{\otimes m}] -   \E[(G\phi)^{\otimes m}]$. For any $k \ge 1$, there exist universal constants $\underline{C}$ and $\overline{C}$ such that
\[  \underline{C}  \sum_{m=1}^{\infty} \frac{\|\Delta_m\|^2}{(\sqrt{3} \sigma)^{2m}m!} \le D_{KL}(p_\t || p_\phi)  \le  2 \sum_{m=1}^{k-1} \frac{\|\Delta_m\|^2}{\sigma^{2m}m!} + \overline{C} \frac{\|\t\|^{2k-2}\rho(\t,\phi)^2}{\sigma^{2k}}. \]
\end{thm}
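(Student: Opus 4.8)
The plan is to reduce everything to the $L^2(\gamma_\sigma)$ geometry of the two mixture densities, where $\gamma_\sigma$ denotes the centred Gaussian density with covariance $\sigma^2 I_L$. Factoring this reference out of \eqref{eq:density}, I would write $p_\t = f_\t\,\gamma_\sigma$ with
\[ f_\t(y) = \E_G\Big[\exp\Big(\tfrac{\langle y, G\t\rangle}{\sigma^2} - \tfrac{\|\t\|^2}{2\sigma^2}\Big)\Big], \]
and observe that $g_v(y)=\exp\big(\langle y,v\rangle/\sigma^2-\|v\|^2/(2\sigma^2)\big)$ is exactly the generating function of the multivariate Hermite tensors $\mathrm{He}_m$ that are orthogonal under $\gamma_\sigma$. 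Expanding $g_{G\t}$ and averaging over the group gives $f_\t = \sum_{m\ge0}\frac{1}{m!\,\sigma^{2m}}\langle \E_G[(G\t)^{\otimes m}],\mathrm{He}_m\rangle$. Orthogonality of the Hermite tensors then yields the central identity
\[ \|f_\t - f_\phi\|_{L^2(\gamma_\sigma)}^2 = \sum_{m\ge1}\frac{\|\Delta_m\|^2}{m!\,\sigma^{2m}}, \]
where the order-zero terms cancel and $\Delta_1=0$ under the centring hypothesis $\E[G\t]=\E[G\phi]=0$. This identifies the series in the statement as the weighted energy of the density difference, and it is this identity, perturbed in two directions, that produces both bounds.

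For the \emph{upper bound} I would start from $D_{KL}(p_\t\|p_\phi)\le\chi^2(p_\t\|p_\phi)=\E_{\gamma_\sigma}[(f_\t-f_\phi)^2/f_\phi]$ and compare this $\chi^2$ with the clean $L^2(\gamma_\sigma)$ energy above. In the regime $\|\t\|\le\sigma$ the densities are close to $\gamma_\sigma$ (so $f_\phi$ is close to $1$), and this comparison costs only a bounded multiplicative constant, which is the source of the factor $2$ on the main term. The truncation index $k$ then serves to separate the low-order contribution $2\sum_{m<k}\|\Delta_m\|^2/(m!\sigma^{2m})$ from the tail $\sum_{m\ge k}$. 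To control the tail I would use the telescoping identity $(G\t)^{\otimes m}-(G\phi)^{\otimes m}=\sum_{j}(G\t)^{\otimes j}\otimes(G\t-G\phi)\otimes(G\phi)^{\otimes(m-1-j)}$; since the moment tensors are $\cG$-invariant one may align $\phi$ optimally before taking norms and obtain $\|\Delta_m\|\lesssim m\,\|\t\|^{m-1}\rho(\t,\phi)$ (using $\|\phi\|\le\tfrac43\|\t\|$, valid under $3\rho\le\|\t\|$). Substituting this and summing the series $\sum_{m\ge k}\frac{m^2}{m!}(\|\t\|/\sigma)^{2m-2}$, whose largest term sits at $m=k$ because $\|\t\|\le\sigma$, gives precisely the remainder $\overline C\,\|\t\|^{2k-2}\rho(\t,\phi)^2/\sigma^{2k}$ with a universal $\overline C$.

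The \emph{lower bound} is where the constant $\sqrt3$ enters, and this is the main obstacle. One cannot use $\chi^2$ here, as it bounds $D_{KL}$ in the wrong direction; instead I would lower bound $D_{KL}$ by a symmetric $f$-divergence of the form $\gtrsim\int (p_\t-p_\phi)^2/\max(p_\t,p_\phi)$, obtained from an elementary pointwise inequality for $x\log x$, so that the density difference reappears quadratically. The hypotheses $3\rho(\t,\phi)\le\|\t\|\le\sigma$ are then exactly what is needed to bound the denominator: a completion-of-squares (Young's inequality) estimate of each shifted Gaussian $\gamma_\sigma(y-G\t)$, controlled by $\|\t\|\le\sigma$, shows that $\max(p_\t,p_\phi)$ is dominated up to constants by a Gaussian with inflated variance $3\sigma^2$, that is by $\gamma_{\sqrt3\,\sigma}$. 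Re-running the Hermite orthogonality computation against this fatter reference replaces each weight $\sigma^{2m}$ by $(\sqrt3\,\sigma)^{2m}$ and yields $\underline C\sum_{m\ge1}\|\Delta_m\|^2/((\sqrt3\sigma)^{2m}m!)$. The delicate points, and the crux of the whole argument, are (a) selecting the correct $f$-divergence lower bound for $D_{KL}$ so that the difference appears as a controllable quadratic, and (b) checking that the variance-inflation step dominates $\max(p_\t,p_\phi)$ uniformly across all infinitely many moment orders while losing only the universal factors $3^m$; the constraint $3\rho\le\|\t\|$ is precisely what keeps $\phi$ in the regime where this domination holds.
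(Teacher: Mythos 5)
The paper does not prove this statement: it is imported verbatim from \cite{BanRigWee17} (Theorem 9 there), so the comparison below is against the known proof of that cited result rather than against anything in this manuscript.

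Your central identity $\|f_\theta-f_\phi\|_{L^2(\gamma_\sigma)}^2=\sum_{m\ge 1}\|\Delta_m\|^2/(m!\,\sigma^{2m})$ and your upper bound are essentially sound and follow the standard route: $D_{KL}\le\chi^2$, then $p_\phi\ge e^{-\|\phi\|^2/(2\sigma^2)}\gamma_\sigma$ by Jensen (note this step silently uses the centering $\E[G\phi]=0$, which you should make explicit), then the telescoping bound $\|\Delta_m\|\le m\,\max(\|\theta\|,\|\phi\|)^{m-1}\rho(\theta,\phi)$ for the tail $m\ge k$. The only blemish is the hard-coded constant: Jensen with $\|\phi\|\le\tfrac43\sigma$ gives the factor $e^{8/9}\approx 2.43$, not $2$, on the main term; this is cosmetic for every use the present paper makes of the theorem.

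The lower bound, however, has a genuine gap, and it is exactly at the point you flag as delicate but then wave through. Pointwise domination $\max(p_\theta,p_\phi)\le C\,\gamma_{\sqrt3\sigma}$ forces $C\gtrsim 3^{L/2}$: evaluating at a mixture center $y=G_0\theta$ gives $p_\theta(y)\ge \frac1L(2\pi\sigma^2)^{-L/2}$ while $\gamma_{\sqrt3\sigma}(y)\le(6\pi\sigma^2)^{-L/2}$, so the peak-height ratio of Gaussians with different variances in $\R^L$ costs an exponential-in-$L$ factor, \emph{once and globally} --- not ``a factor $3^m$ per moment order''. The hypothesis $3\rho\le\|\theta\|\le\sigma$ is irrelevant to this loss; it is purely dimensional. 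One might hope the mixed-variance $\chi^2$-type integral compensates, but a direct computation gives
\[
\int \frac{\gamma_\sigma(y-u)\gamma_\sigma(y-v)}{\gamma_{\sqrt3\sigma}(y)}\,dy=\Big(\tfrac95\Big)^{L/2}\exp\Big(-\tfrac{\|u\|^2+\|v\|^2}{5\sigma^2}+\tfrac{3\langle u,v\rangle}{5\sigma^2}\Big),
\]
so $\int(p_\theta-p_\phi)^2/\gamma_{\sqrt3\sigma}$ carries only $(9/5)^{L/2}$, and your chain yields $D_{KL}\gtrsim (3/5)^{L/2}\cdot(\cdots)$: the ``universal'' constant $\underline C$ decays exponentially in $L$. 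Two further problems appear in the same display: the weights come out as $(3/(5\sigma^2))^m$, not $(3\sigma^2)^{-m}$, and the prefactors $e^{-(\|u\|^2+\|v\|^2)/(5\sigma^2)}$ differ across the $\theta\theta$, $\theta\phi$, $\phi\phi$ terms (since $\|\theta\|\neq\|\phi\|$ in general), so the expression is a perfect square in $e^{-\|\theta\|^2/(5\sigma^2)}M_m(\theta)-e^{-\|\phi\|^2/(5\sigma^2)}M_m(\phi)$ rather than in $\Delta_m$; repairing this costs another dimension-dependent factor because $\big|\|\theta\|^2-\|\phi\|^2\big|=|\mathrm{Tr}\,\Delta_2|\le\sqrt L\,\|\Delta_2\|_F$.

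The actual mechanism producing $\sqrt3$ is Gaussian hypercontractivity, and it is dimension-free precisely because it damps \emph{each Hermite layer} by $3^{-m/2}$ instead of comparing densities pointwise. Writing $f=(p_\theta-p_\phi)/\gamma_\sigma$ and letting $T_\rho$ be the Ornstein--Uhlenbeck semigroup, one has the identity
\[
\sum_{m\ge1}\frac{\|\Delta_m\|^2}{(\sqrt3\sigma)^{2m}m!}=\big\|T_{1/\sqrt3}f\big\|_{L^2(\gamma_\sigma)}^2\le\|f\|_{L^{4/3}(\gamma_\sigma)}^2,
\]
the inequality being Nelson's theorem ($T_\rho:L^{1+\rho^2}\to L^2$ is a contraction, with $1+\rho^2=4/3$ at $\rho=1/\sqrt3$). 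Then H\"older with exponents $(3/2,3)$ gives $\|f\|_{L^{4/3}(\gamma_\sigma)}^2\le\big(\int\frac{(p_\theta-p_\phi)^2}{p_\theta+p_\phi}\big)\big(\int\frac{(p_\theta+p_\phi)^2}{\gamma_\sigma}\big)^{1/2}$; the second factor is a universal constant under $\|\theta\|,\|\phi\|\le\tfrac43\sigma$, and the first is at most $4h^2\le 2D_{KL}$ by your own Hellinger step. So your instinct about which $f$-divergence to use was right, but the variance-inflation domination must be replaced by the semigroup/hypercontractivity argument; as written, your lower bound does not establish the theorem with universal constants.
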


We now use Lemma \ref{lem:BRW2} and Theorem \ref{thm:BRW2} in order to obtain bounds on $D_{KL}(p_\t \| p_\phi)$ that are tailored to our specific requirements in the present paper, focussing mostly on the second moment difference tensor in the context of Theorem \ref{thm:BRW2}. 

To this end, we consider the following results. For notational simplicity, we will use the notation $D_{KL}(\t_1 \| \t_2)$ to denote $D_{KL}(p_{\t_1} \| p_{\t_2})$. Further, for any $\t  = (\t^{(1)},\ldots,\t^{(L)}) \in \R^L$, we denote $\bt=\frac{1}{L}\sum_{i=1}^L \t^{(i)}$.
\begin{prop} \label{prop:linear_LB}
Let $\t,\phi \in \cT \subset \R^L$  belong to a bounded set $\cT$ of signals. Then for $\s$ bigger than a threshold $\s_0(L)$, and  $\varrho(\t,\phi)$ small enough, we have
\[ D_{KL}(\t || \phi) \ge C \s^{-4} \cdot  \|\Del_2(\t,\phi)\|_F^2. \]
\end{prop}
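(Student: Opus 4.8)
The plan is to apply the lower bound in Theorem~\ref{thm:BRW2} and isolate the contribution of the second moment difference tensor $\Delta_2 = \Delta_2(\t,\phi)$. First I would reduce to the centered case: by Lemma~\ref{lem:BRW2}, $D_{KL}(\t \| \phi) = D_{KL}(\tilde\t \| \tilde\phi) + \frac{1}{2\s^2}\|\Delta_1(\t,\phi)\|^2$, where $\tilde\t = \t - \E_\cG[G\t]$ and $\tilde\phi = \phi - \E_\cG[G\phi]$ have $\E[G\tilde\t] = \E[G\tilde\phi] = 0$. Since $\frac{1}{2\s^2}\|\Delta_1\|^2 \ge 0$, it suffices to lower bound $D_{KL}(\tilde\t \| \tilde\phi)$; moreover, centering does not change the second moment difference tensor in the relevant way, because $\Delta_2$ of the signals and $\Delta_2$ of their centered versions differ only by terms controlled by $\Delta_1$, so passing to $\tilde\t,\tilde\phi$ is essentially harmless (and can only help the lower bound). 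This puts us in the hypothesis $\E[G\tilde\t]=\E[G\tilde\phi]=0$ of Theorem~\ref{thm:BRW2}.

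Next I would verify the remaining hypotheses of Theorem~\ref{thm:BRW2} for the centered pair. Since $\cT$ is bounded, $\|\tilde\t\|$ is bounded above by a constant depending only on $\cT$; taking $\s$ larger than the threshold $\s_0(L)$ ensures $\|\tilde\t\| \le \s$. Taking $\varrho(\t,\phi)$ (equivalently $\rho$) small enough guarantees $3\rho(\tilde\t,\tilde\phi) \le \|\tilde\t\|$. With these in hand, the lower bound of Theorem~\ref{thm:BRW2} gives
\[
D_{KL}(\tilde\t \| \tilde\phi) \;\ge\; \underline{C} \sum_{m=1}^{\infty} \frac{\|\Delta_m(\tilde\t,\tilde\phi)\|^2}{(\sqrt{3}\,\s)^{2m}\, m!}.
\]
Every term in this sum is nonnegative, so I would simply retain the single $m=2$ term and discard the rest:
\[
D_{KL}(\tilde\t \| \tilde\phi) \;\ge\; \underline{C}\,\frac{\|\Delta_2(\tilde\t,\tilde\phi)\|^2}{(\sqrt{3}\,\s)^{4}\, 2!} \;=\; \frac{\underline{C}}{18}\,\s^{-4}\,\|\Delta_2(\tilde\t,\tilde\phi)\|_F^2.
\]
Relating $\Delta_2(\tilde\t,\tilde\phi)$ back to $\Delta_2(\t,\phi)$ then yields the claim with an appropriate universal constant $C$.

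The main obstacle I anticipate is the bookkeeping in the centering step: the difference between $\Delta_2(\t,\phi)$ and $\Delta_2(\tilde\t,\tilde\phi)$ involves the means $\E_\cG[G\t]$ and $\E_\cG[G\phi]$, which for the cyclic group equal the constant vectors $\bt\mathbf{1}$ and $\bp\mathbf{1}$. Expanding $(G\tilde\t)^{\otimes 2} = (G\t - \bt\mathbf{1})^{\otimes 2}$ and taking expectations produces cross terms coupling the first and second moments, so I would need to argue either that these cross terms are nonnegative in the relevant Frobenius-norm comparison, or that they are absorbed into the discarded $\frac{1}{2\s^2}\|\Delta_1\|^2 \ge 0$ contribution from Lemma~\ref{lem:BRW2}. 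A clean way to handle this is to note that the $m=1$ term in Theorem~\ref{thm:BRW2} applied to the \emph{uncentered} signals already captures $\|\Delta_1\|^2$, and to combine the $m=1$ and $m=2$ contributions so that the full (uncentered) $\|\Delta_2(\t,\phi)\|_F^2$ is recovered up to a constant. Provided $\s \ge \s_0(L)$ is chosen so that the $\s^{-4}$ factor dominates the lower-order $\s^{-2}$ corrections from the means, this yields the stated bound $D_{KL}(\t\|\phi) \ge C\s^{-4}\|\Delta_2(\t,\phi)\|_F^2$.
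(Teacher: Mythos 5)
Your proposal follows essentially the same route as the paper's proof: decompose via Lemma~\ref{lem:BRW2}, apply the lower bound of Theorem~\ref{thm:BRW2} to the centered pair keeping only the $m=2$ term, and then transfer the bound from $\Delta_2(\tilde\theta,\tilde\phi)$ back to $\Delta_2(\theta,\phi)$. In its final form (your option of absorbing the centering error into the $\frac{1}{2\sigma^2}\|\Delta_1\|^2$ term from Lemma~\ref{lem:BRW2}) it is the paper's argument.

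However, two intermediate claims as written are wrong and must be replaced by that option. First, the assertion in your opening paragraph that passing to $\tilde\theta,\tilde\phi$ is ``essentially harmless (and can only help the lower bound)'' is false: the paper's Proposition~\ref{prop:centred_vs_uncentred} gives $\Delta_2(\theta,\phi) = \Delta_2(\tilde\theta,\tilde\phi) + (\bar\theta^2-\bar\phi^2)\,\ind\otimes\ind$, so the centered tensor can be strictly \emph{smaller} than the uncentered one (e.g.\ $\Delta_2(\tilde\theta,\tilde\phi)=0$ while $\bar\theta\neq\bar\phi$ makes $\|\Delta_2(\theta,\phi)\|_F$ of order $L$); hence discarding the $\Delta_1$ term and bounding only the centered $\Delta_2$ cannot yield the stated inequality. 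Second, your ``clean way'' of reading off the $m=1$ term of Theorem~\ref{thm:BRW2} for the \emph{uncentered} signals is not permissible, since that theorem's hypothesis requires $\E[G\theta]=\E[G\phi]=0$; the $\|\Delta_1\|^2$ contribution must come from Lemma~\ref{lem:BRW2}, which is available without any mean-zero assumption. With that repair, the remaining bookkeeping is exactly the paper's: bound $|\bar\theta^2-\bar\phi^2|\le(\|\theta\|_\infty+\|\phi\|_\infty)\,|\bar\theta-\bar\phi|$, split via the elementary inequality $(a-b)^2\ge\tfrac{3}{4}a^2-3b^2$ (Proposition~\ref{prop:real_numbers}), and note --- the direction here is the opposite of what you wrote --- that for $\sigma\ge\sigma_0(L)$ the \emph{positive} term $\tfrac12\sigma^{-2}|\bar\theta-\bar\phi|^2\|\ind\|_2^2$ coming from $\Delta_1$ dominates the \emph{negative} $\sigma^{-4}|\bar\theta-\bar\phi|^2$ correction (since $\sigma^{-2}\gg\sigma^{-4}$ and $\cT$ is bounded), leaving the clean bound $D_{KL}(\theta\|\phi)\ge C\sigma^{-4}\|\Delta_2(\theta,\phi)\|_F^2$.
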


\begin{prop} \label{prop:quadratic_UB}
Let $\t,\phi \in \cT \subset \R^L$  belong to a bounded set $\cT$ of signals, such that $\bt=\bp$ and $\|\Del_2(\t,\phi)\|_F \le c \rho(\t,\phi)^2$. Then, for  $\varrho(\t,\phi)$ small enough we have 
\[
 D_{KL}(\t || \phi)  \le C \frac{\|\t\|_2^4}{\sigma^6} \cdot \rho(\t,\phi)^2
\]
for some positive number $C$.
\end{prop}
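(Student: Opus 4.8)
The plan is to reduce to \emph{centered} signals via Lemma~\ref{lem:BRW2} and then invoke the upper bound of Theorem~\ref{thm:BRW2} with the truncation level $k=3$, using the hypothesis $\|\Delta_2(\t,\phi)\|_F \le c\,\rho(\t,\phi)^2$ to relegate the second-moment contribution to a higher-order term in $\rho$. First I would record that for the uniform cyclic-shift group one has $\E_{\cG}[G\t]=\bt\,\mathbf 1$ (averaging a signal over all shifts returns its mean at every coordinate), so the centered signals appearing in Lemma~\ref{lem:BRW2} are $\tlt=\t-\bt\,\mathbf 1$ and $\tlp=\phi-\bp\,\mathbf 1$. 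Because $\bt=\bp$, the first-moment difference tensor vanishes, $\Delta_1(\t,\phi)=\E[G\t]-\E[G\phi]=(\bt-\bp)\mathbf 1=0$, and hence by Lemma~\ref{lem:BRW2}, $D_{KL}(\t\|\phi)=D_{KL}(\tlt\|\tlp)+\tfrac{1}{2\sigma^2}\|\Delta_1\|^2=D_{KL}(\tlt\|\tlp)$. Thus it suffices to bound the divergence between the centered signals, which by construction satisfy $\E_{\cG}[G\tlt]=\E_{\cG}[G\tlp]=0$, exactly the hypothesis required by Theorem~\ref{thm:BRW2}.

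Next I would verify that the three quantities entering Theorem~\ref{thm:BRW2} are essentially unchanged by centering. Since $G\mathbf 1=\mathbf 1$ and $\bt=\bp$, the vectors subtracted from $\t$ and $\phi$ coincide, so $\tlt-G\tlp=\t-G\phi$ for every $G\in\cG$ and therefore $\rho(\tlt,\tlp)=\rho(\t,\phi)$. A direct expansion gives $\E[(G\tlt)^{\otimes 2}]=\E[(G\t)^{\otimes 2}]-\bt^2\,\mathbf 1^{\otimes 2}$, and likewise for $\phi$; subtracting and using $\bt=\bp$ yields $\Delta_2(\tlt,\tlp)=\Delta_2(\t,\phi)$, so $\|\Delta_2(\tlt,\tlp)\|_F\le c\,\rho(\t,\phi)^2$. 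Finally $\|\tlt\|\le\|\t\|$ by orthogonality of the mean. With $\sigma$ above the threshold $\sigma_0(L)$ (so that $\|\tlt\|\le\|\t\|\le\sigma$ on the bounded class $\cT$) and $\rho(\t,\phi)$ small enough relative to the fixed non-degenerate signal (so that $3\rho(\tlt,\tlp)\le\|\tlt\|$), the hypotheses $3\rho(\tlt,\tlp)\le\|\tlt\|\le\sigma$ of Theorem~\ref{thm:BRW2} are met for the pair $(\tlt,\tlp)$.

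Applying Theorem~\ref{thm:BRW2} with $k=3$ then gives
\[ D_{KL}(\tlt\|\tlp)\;\le\;\frac{\|\Delta_2(\tlt,\tlp)\|_F^2}{\sigma^4}+\overline{C}\,\frac{\|\tlt\|^4\,\rho(\tlt,\tlp)^2}{\sigma^6}\;\le\;\frac{c^2\,\rho(\t,\phi)^4}{\sigma^4}+\overline{C}\,\frac{\|\t\|^4\,\rho(\t,\phi)^2}{\sigma^6}, \]
where the $m=1$ term has dropped out because $\Delta_1(\tlt,\tlp)=0$, and where I used $\|\tlt\|\le\|\t\|$ together with $\|\Delta_2(\tlt,\tlp)\|_F\le c\,\rho(\t,\phi)^2$. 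The crucial final step is to note that the first term is higher order in $\rho$: the ratio of the first term to the second is a constant multiple of $\rho(\t,\phi)^2\,\sigma^2/\|\t\|^4$, which for fixed $\sigma$ and fixed $\t$ (with $\tlt\ne 0$) tends to $0$ as $\rho(\t,\phi)\to 0$. Hence for $\varrho(\t,\phi)$ small enough the first term is dominated by the second, and $D_{KL}(\t\|\phi)=D_{KL}(\tlt\|\tlp)\le C\,\|\t\|^4\rho(\t,\phi)^2/\sigma^6$, as claimed.

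The main obstacle is conceptual rather than computational: one must use $k=3$ rather than $k=2$ in Theorem~\ref{thm:BRW2} in order to obtain the desired $\sigma^{-6}$ scaling in the remainder, and this forces the explicit appearance of the $m=2$ term $\|\Delta_2\|_F^2/\sigma^4$. The hypothesis $\|\Delta_2\|_F\le c\,\rho^2$ is precisely what renders this term $O(\rho^4)$, hence negligible against the $O(\rho^2)$ remainder as $\rho\to 0$; without it one would only obtain the weaker $\sigma^{-4}$ bound coming from $k=2$. Secondary care is needed to guarantee that the centered signal $\tlt$ is non-degenerate, so that the admissibility condition $3\rho(\tlt,\tlp)\le\|\tlt\|\le\sigma$ can indeed be satisfied; this is exactly where the boundedness of $\cT$ and the ``$\varrho(\t,\phi)$ small enough'' clause are invoked.
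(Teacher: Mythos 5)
Your proof is correct and follows essentially the same route as the paper: reduction to centered signals via Lemma~\ref{lem:BRW2} (the first-moment term vanishing since $\bt=\bp$), then Theorem~\ref{thm:BRW2} with $k=3$, with the hypothesis $\|\Del_2(\t,\phi)\|_F\le c\,\rho(\t,\phi)^2$ turning the second-moment term into an $O(\rho^4/\sigma^4)$ contribution that is dominated by the $\sigma^{-6}\rho^2$ remainder once $\rho(\t,\phi)$ is small. The only cosmetic difference is that you exploit $\bt=\bp$ from the outset to obtain the exact identities $\rho(\tlt,\tlp)=\rho(\t,\phi)$ and $\Del_2(\tlt,\tlp)=\Del_2(\t,\phi)$, whereas the paper runs the computation with general means (via Propositions~\ref{prop:centred_vs_uncentred} and~\ref{prop:distance_compare}) and specializes to $\bt=\bp$ at the end.
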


We defer the proof of Propositions \ref{prop:linear_LB} and \ref{prop:quadratic_UB} to Section \ref{sec:curv_prop}.


\section{The dilute regime of sparsity and the Beltway problem}
\label{sec:rates-beltway}

\subsection{Proof of Theorem \ref{thm:rates-beltway}}
In this section, we will establish Theorem \ref{thm:rates-beltway} and Lemma \ref{lem:lbound-beltway}.

\begin{proof}[Proof of Theorem \ref{thm:rates-beltway}]
We combine Lemma \ref{lem:lbound-beltway} with Proposition \ref{prop:linear_LB} to deduce that, for $\s \ge \s_0(L)$ and any $\t,\t_0 \in \cT$ such that $\varrho(\t,\t_0)$ is small enough, the Kullback-Leibler divergence \[D_{KL}(\t||\t_0) \ge c \s^{-4} {\frac{s}{L}} \cdot \rho(\t,\phi)^2\] for some positive number $c$. This enables us to invoke Proposition \ref{prop:CurveLB-normality} part (i) and deduce the desired asymptotic rate of estimation. This completes the proof.
\end{proof}

As is evident from the proof of Theorem \ref{thm:rates-beltway}, the key phenomenon to understand in the setting of the present section is the curvature lower bound as encapsulated in Lemma \ref{lem:lbound-beltway}. We now proceed to the proof of this important result.

\begin{proof}[Proof of Lemma \ref{lem:lbound-beltway}]
As in the statement of the lemma, we focus on the situation where $\varrho(\t,\t_0)$ is small, and we recall that, for $\vr(\t,\t_0)$ small enough, we may take $\vr(\t,\t_0)=\|\t-\t_0\|_2$ because of the $\cG$ invariance of $D_{KL}$ and the moment tensors $\Delta_m$. This is the setting in which we will work. 

Notice that in this setting, $\t$ and $\t_0$ have the same support. This follows from the assumption on our signal class $\cT$ that for any $u \in \cT$, we have $|u(i)| \ge m \quad \forall i \in \supp(u)$. As a result, if $i \in \supp(\t) \Delta \supp(\t_0)$, then $|\t(i)-\t_0(i)|\ge m$, which implies that $\vr(\t,\t_0)=\|\t-\t_0\|_2 \ge m$, which contradicts the smallness of $\vr(\t,\t_0)$.

We  set $h=\t-\t_0$, to be thought of has having small $L^2$-norm. Notice that the above discussion implies $\supp(h) \subseteq \supp(\t_0)$. We  consider \[\|\E_\cG[G\t^{\otimes 2}G^*] - \E_\cG[G\t_0^{\otimes 2}G^*]\|_F=\|\E_\cG[G(\t_0+h)^{\otimes 2}G^*] - \E_\cG[G\t_0^{\otimes 2}G^*]\|_F.\] To the leading order in $h$, this is $\|\E_\cG[\t_0 h^* + h \t_0^*]\|_F$, where $^*$ denotes  transpose. Since $\|h\|_2$ is small, it suffices to consider this leading order term, and demonstrate that this is $\ge c \sqrt{s} \|h\|_2$. Henceforth, we focus on this objective.

We then have
\[ \bigg( \E_\cG[G(\t_0 h^* + h \t_0^*)G^*]  \bigg)_{i,j} =   \frac{1}{L}\sum_{g \in \Z_L} [\t_0(i+g)h(j+g) + h(i+g)\t_0(j+g)] . \] In our subsequent considerations, we will use the symbol $J$ to denote the matrix $\E_\cG[G(\t_0 h^* + h \t_0^*)G^*] $.
Observe that $J$ is a Toeplitz matrix. We can therefore denote the entries of $J$ as $J_{i,j}=J_{j-i}$.
Furthermore,  for each $i,j$ we have $J_{i,j}=J_{i+k,j+k}$, for any $k \in \Z_L$ and the sums $i+k,j+k$ in the indices being interpreted to be sums in $\Z_L$. In view of this, we can write $\|J\|_F^2=L\sum_{k=0}^{L-1}|J_{k}|^2$. From here on, we will focus on estimating from below the sum
$\sum_{k=0}^{L-1}|J_{k}|^2$.

Note that $\t_0(i+g)h(j+g)$ or $h(i+g)\t_0(j+g)$ is non-zero only if both $i+g$ and $j+g$ belong to the support of $\t_0$ (which contains the support of $h$). But since all non-zero differences occur exactly once (collision-free property), there exists a unique $g=g(i,j)$ such that $ [\t_0(i+g)h(j+g) + h(i+g)\t_0(j+g)]$ can possibly be non-zero. In particular, this means that, for $J_k$ to be non-zero, $k$ has to belong to $\cD(\t_0)$.

Suppose $0 \ne k \in \cD(\t_0)$ and let $i,j \in \supp(\t_0)$ be such that $j-i=k$. By the collision-free property, there is exactly one such pair $(i,j)$. Therefore \[|J_k|^2=\frac{1}{L^2}[\t_0(i)h(j) + h(i)\t_0(j)]^2.\] Conversely, if $i \ne j$ are such that $j-i \in \cD(\t_0)$, then  there is a (unique) contribution to the sum $\sum_k|J_{k}|^2$ by an amount ${\displaystyle
\frac{1}{L^2}[\t_0(i)h(j) + h(i)\t_0(j)]^2
\displaystyle}$.

Finally, note that in case either $i$ or $j$ does not belong to $\supp(\t_0)$, we have \[\frac{1}{L^2}[\t_0(i)h(j) + h(i)\t_0(j)]^2=0.\]

Putting together all of the above, and denoting $s:=|\supp(\t_0)|$ we have
\begin{align*}
&\|J\|_F^2  \quad  =  L \sum_{k=0}^{L-1}|J_{k}|^2  \\  \ge &  L\sum_{k=1}^{L-1}|J_{k}|^2 \quad  \ge    \frac{1}{L} \cdot \sum_{i \ne j} [\t_0(i)h(j) + h(i)\t_0(j)]^2 \\
= & \frac{1}{L} \cdot \sum_{i \ne j} [\t_0(i)^2h(j)^2 + h(i)^2\t_0(j)^2 + 2 \t_0(i)h(i) \t_0(j)h(j)]. \\
= & \frac{1}{L} \cdot \left[2 \left( \sum_i \t_0(i)^2 \right)  \left( \sum_j h(j)^2 \right)  - 2\sum_i \t_0(i)^2 h(i)^2 + 2 \left(\sum_{i} \t_0(i) h(i) \right)^2 \right.  \\ 
& \left.  \qquad \qquad - 2 \sum_i \t_0(i)^2 h(i)^2 \right] \\
\ge &  \frac{2}{L} \cdot \left[ \|\t_0\|^2 \|h\|^2 -  2 \sum_i \t_0(i)^2 h(i)^2 \right] 
\ge   \frac{2}{L} \cdot \left[ \|\t_0\|^2 \|h\|^2 -  2 M^2\sum_i h(i)^2 \right] \\
\ge &   \frac{2}{L} \cdot  \left(\|\t_0\|^2 - 2M^2\right)\|h\|^2   
\ge   \frac{2}{L} \cdot  \left(sm^2 - 2M^2 \right)\|h\|^2   \\
= & \frac{2s}{L}(m^2-\frac{2M^2}{s})\|h\|^2, \numberthis \label{eq:lower_bnd} \\
\end{align*}
where, in the last few steps, we have used the fact that $m \le |\t_0(i)| \le M ~ \forall i \in \supp(\t_0)$. 

For $s \ge (2+\eps) M^2/m^2$ with $\eps>0$, the lower bound in \eqref{eq:lower_bnd} can be further bounded below by $\frac{2 \eps}{2 + \eps} \cdot \frac{s}{L} \cdot \|h\|^2$, as desired.
\end{proof}

\section{Curvature of KL divergence and the Fourier Transform}
In this section, we will show that, without additional structural assumptions on the signal (such as sparsity), the second moment is \textit{generically} insufficient to achieve $O(\s^4)$ sample complexity, as indicated in Theorem \ref{thm:impossibility}. In doing so, we will study the MRA problem in general, and the second moment tensor in particular, from the point of view of the Fourier transform of the signal $\t$.

\begin{proof}[Proof of Theorem \ref{thm:impossibility}]
We begin with the fact that  and that $D_{KL}(p_{\t_0}\|p_{\t})$ has a local minimum at $\t=\t_0$,  
and on a related note, we have 
\[ D_{KL}(p_{\t_0}\|p_{\t})\big|_{\t=\t_0}=0,  \quad \nabla_\t D_{KL}(p_{\t_0}\|p_{\t})\big|_{\t=\t_0}=0, \quad \nabla_\t^2 D_{KL}(p_{\t_0}\|p_{\t})\big|_{\t=\t_0}=I(\t_0). \]
We can consider a second order Taylor series expansion of $D_{KL}(p_{\t_0}\|p_{\t})$ in the variable $\t$ in a small enough neighbourhood of $\t_0$, and obtain 
\begin{equation} \label{eq:degenerate-1}
D_{KL}(p_{\t_0}\|p_{\t}) \ge c \sigma_{\min}(I(\t_0))\varrho(\t,\t_0)^2,
\end{equation}
where  $\sigma_{\min}(I(\t_0))$ is the smallest singular value of $I(\t_0)$. 

We combine the above observation with Lemma \ref{lem:lbound-impossibility} and Proposition \ref{prop:quadratic_UB}. For $\t_k$ as in Lemma \ref{lem:lbound-impossibility}, we may deduce via Proposition \ref{prop:quadratic_UB} that 
\begin{equation} \label{eq:degenerate-2}
D_{KL}(p_{\t_0}\|p_{\t}) \le c \|\t_0\|_2^4 \s^{-6} \varrho(\t,\t_0)^2  \le  c(L) \s^{-6} \varrho(\t,\t_0)^2,
\end{equation}
where we have used the fact that the boundedness of the signal class $\cT$ implies that $\|\t\|_2 \le C(L)$ uniformly for $\t \in \cT$ for some positive number $C(L)$.

Combining \eqref{eq:degenerate-1} and \eqref{eq:degenerate-2}, we obtain
\begin{equation} \label{eq:degenerate-3}
\sigma_{\min}(I(\t_0))  \le c(L) \s^{-6}.
\end{equation}

We recall from Section \ref{sec:est_rates_curv}, in particular \eqref{eq:asymp_normality}, that $\sqrt{n}(\tlt_n-\t_0) \to N(0,I(\t_0)^{-1})$ as $n \to \infty$. Let $\cZ$ denote a standard Gaussian vector of the same dimension as $\t_0$. Then we may write  $\sqrt{n}(\tlt_n-\t_0) \to I(\t_0)^{-1/2}\cZ$. 
Write $I(\t_0)=U^* \Sigma U$ as the spectral decomposition of $I(\t_0)$ with the eigenvalues $\{\s_i(I(\t_0))\}_i$. Notice that, by  rotational invariance,  $\cZ':=U\cZ$ is also a standard Gaussian of the same dimension (with co-ordinates $\{\cZ_i'\}_i$ being distributed as a standard $N(0,1)$ variable $z$).  We may then deduce that 
\begin{align*}
 & \sqrt{n}\|\t-\t_0\|_2 \to  \|  I(\t_0)^{-1/2}\cZ \|_2 \\ = &  \| U^* \Sigma^{-1/2} U \cZ \|_2  = \| \Sigma^{-1/2} \cZ' \|_2 \\  = &  \sqrt{\sum_i \s(I(\t_0))^{-1} |\cZ_i'|^2} \ge \sqrt{[\s_{\min}(I(\t_0))]^{-1}} |z|  \\ \ge & c_1(L) \s^3 |z| = \Omega_p(\s^3 ), 
 \end{align*} 
 where in the last inequality we have used \eqref{eq:degenerate-3}.
%

Finally, in a small enough neighbourhood of $\t_0$ we may identify $\vr(\t,\t_0)$ as $\frac{1}{\sqrt{L}}\|\t-\t_0\|_2$, and conclude that $\sqrt{n}\vr(\tlt_n,\t_0) = \Omega_p(\s^3)$ as $n \to \infty$, as desired. 
\end{proof}

We henceforth focus our attention on proving Lemma \ref{lem:lbound-impossibility}.

In order to carry out our investigations, we will utilise the Fourier transform as a key tool, and make repeated use of the renowned Parseval-Plancherel Theorem regarding the isometry properties of the Fourier transform. For convenience, we recall below these notions relevant for  our analysis on $\Z_L$.

\begin{defi}
For $\t = (\t_1,\ldots,\t_L) \in \R^L$, i.e. $\t: \Z_L \mapsto \R$, we define the Fourier transform $\hht: \Z_L \mapsto \C$ as
\[ \hht_j = \sum_{k=1}^L \t_k \exp\left( - \frac{2\pi i jk}{L} \right). \]
The inverse Fourier transform of $\t$, denoted $\ct: \Z_L \mapsto \C$ is defined as
\[ \ct_j = \frac{1}{L} \sum_{k=1}^L \t_k \exp\left(  \frac{2\pi i jk}{L} \right). \]
\end{defi}

\begin{thm}[Parseval-Plancherel Theorem]  \label{thm:Plancherel}
For $\t = (\t_1,\ldots,\t_L) \in \R^L$, i.e. $\t: \Z_L \mapsto \R$, we have
\[ \|\t\|_2^2 = \sum_{k=1}^L |\t_k|^2 = \frac{1}{L} \sum_{k=1}^L |\hht_k|^2 = \frac{1}{L}\|\hht\|_2^2. \] 
Equivalently, for $\t,\psi \in \R^L$, we may write
\[  \sum_{k=1}^L \t_k \overline{\psi}_k = \frac{1}{L} \sum_{k=1}^L \hht_k \overline{\hat{\psi}_k}  = \frac{1}{L} \cdot \lg \hht , \hat{\psi} \rg . \] 
\end{thm}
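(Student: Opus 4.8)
The plan is to derive the norm identity $\|\t\|_2^2 = \frac{1}{L}\|\hht\|_2^2$ directly from the definition of the discrete Fourier transform, the single analytic input being the orthogonality of the additive characters of $\Z_L$; the bilinear (inner-product) form then follows by an identical computation, or by polarization.

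First I would record the orthogonality relation for the complex exponentials: for any integer $n$,
\[ \sum_{j=1}^{L} \exp\!\left( \frac{2\pi i n j}{L} \right) = L \cdot \ind[\,n \equiv 0 \text{ mod } L\,]. \]
When $L \mid n$ every summand equals $1$ and the sum is $L$; otherwise $\o := \exp(2\pi i n / L)$ is an $L$-th root of unity with $\o \ne 1$, and the finite geometric sum $\sum_{j=1}^{L} \o^{j} = \o\,\frac{\o^{L}-1}{\o-1}$ vanishes since $\o^{L} = 1$. This is the only nontrivial ingredient.

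Next I would expand the squared Fourier norm and interchange the (finite) order of summation. Writing $\|\hht\|_2^2 = \sum_{j=1}^{L} \hht_j \overline{\hht_j}$ and substituting the definition of $\hht_j$ gives
\[ \|\hht\|_2^2 = \sum_{j=1}^{L} \left( \sum_{k=1}^{L} \t_k\, e^{-2\pi i jk/L} \right)\!\left( \sum_{k'=1}^{L} \overline{\t_{k'}}\, e^{2\pi i jk'/L} \right) = \sum_{k=1}^{L} \sum_{k'=1}^{L} \t_k \overline{\t_{k'}} \sum_{j=1}^{L} e^{2\pi i j(k'-k)/L}. \]
Applying the orthogonality relation with $n = k'-k$, the inner sum over $j$ equals $L\,\ind[k=k']$, so the double sum collapses to $L \sum_{k=1}^{L} |\t_k|^2 = L\|\t\|_2^2$, which is exactly the asserted identity $\|\t\|_2^2 = \frac{1}{L}\|\hht\|_2^2$.

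For the bilinear version $\sum_{k} \t_k \overline{\psi_k} = \frac{1}{L}\lg \hht, \hat{\psi} \rg$ I would run the same computation with $\t_k \overline{\psi_{k'}}$ in place of $\t_k \overline{\t_{k'}}$; the orthogonality step is word-for-word identical and produces the factor $L\,\ind[k=k']$, leaving $L\sum_{k} \t_k \overline{\psi_k}$. Alternatively, it follows from the norm identity by polarization. There is no substantive obstacle here: the only point demanding care is the bookkeeping of index conventions — that the inner sum over $j$ ranges over a full residue system modulo $L$ (so the geometric-series argument applies), and that the opposite sign conventions of the forward transform in the two factors combine to give the cross-exponential $e^{2\pi i j(k'-k)/L}$ whose exponent depends only on $k'-k$.
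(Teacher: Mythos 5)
Your proof is correct: the orthogonality relation for the characters of $\Z_L$, the interchange of finite sums, and the collapse of the double sum via $L\,\ind[k=k']$ (valid because $|k'-k|\le L-1$, so divisibility by $L$ forces $k=k'$) constitute the standard and complete argument, and the bilinear form does follow either by the same computation or by polarization. Note that the paper itself offers no proof at all --- it recalls Parseval--Plancherel as a classical fact --- so your write-up supplies exactly the canonical argument one would expect, consistent with the paper's definition $\hht_j = \sum_{k=1}^L \t_k \exp(-2\pi i jk/L)$ and its normalization with the factor $1/L$ on the Fourier side.
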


We begin with a result that obtains a succinct expression for the second moment tensor that is valid for any signal in $\R^L$, and therefore of general interest. For stating our result, we introduce the following notation: for any vector $v \in \R^L$, we denote by $\cM(v)$ the matrix \[[\cM(v)]_{ij}:=v(i-j).\] This identifies the matrix $\cM(v)$ as the Toeplitz matrix with symbol $\hat{v}$, the Fourier transform of $v$. Furthermore, as is common in our context, we will view  any vectors $u,v \in \R^L$ to be functions mapping $\Z_L \mapsto \R$, and by the convolution $u \ast v$ we will denote the convolution of these two functions (under the action of the rotation group $\Z_L$). Namely, \[[u  \ast v](k)=\sum_{g \in \Z_L} u(g)v(k-g).\] We recall that for any $v: \Z_L \mapsto \R$, the vector $\check{v}$ is given by $[\check{v}](i)=v(-i)$.
\begin{lem} \label{lem:Toeplitz}
For any $v_1,v_2 \in \R^L$, we have \[\E_{\bG}[G (v_1 \otimes v_2) G^*]=\frac{1}{L}\cdot \cM(v_1 \ast \check{v}_2). \]
\end{lem}
\begin{proof}
Observe that \[\E_\cG[G (v_1 \otimes v_2) G^*]_{ij}=\frac{1}{L}\cdot \left(\sum_{g \in \Z_L} v_1(i+g)v_2(j+g) \right),\] where as is usual in this context, the indices $i+g,j+g$ for the co-ordinates of the vectors are interpreted in the cyclic group $\Z_L$. Setting $i'=i+g$, we can re-write
 \begin{align*}&\E_\cG[G (v_1 \otimes v_2) G^*]_{ij} \\= &\frac{1}{L}\cdot \left(\sum_{i' \in \Z_L} v_1(i')v_2(j-i+i') \right) \\=&\frac{1}{L}\cdot \left(\sum_{i' \in \Z_L} v_1(i')\check{v}_2(i-j-i') \right) \\=&\frac{1}{L}\cdot \left[\cM(v_1 \ast \check{v}_2)\right]_{ij} ,\end{align*}
 as desired.
\end{proof}
Another result which would be useful for us subsequently is the following lemma.
\begin{lem} \label{lem:difference}
For any $v \in \R^L$, we have
\begin{equation} \label{eq:norm} \|\cM(v)\|_F = \sqrt{L}\|v\|_2 = \|\hat{v}\|_2^2.  \end{equation}
More generally, \begin{equation} \label{eq:iprod} \mathrm{Tr}[\cM(v)\cM(w)^*]= L \lg v,w \rg = \lg \hat{v},\hat{w}  \rg. \end{equation}
\end{lem}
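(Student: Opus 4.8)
The plan is to prove the more general polarization identity \eqref{eq:iprod} first, since the norm identity \eqref{eq:norm} is simply its diagonal specialization $v=w$. Both reduce, via the Parseval--Plancherel Theorem (Theorem~\ref{thm:Plancherel}), to a direct combinatorial computation with the circulant structure of $\cM$. The one point requiring care is the index bookkeeping over the cyclic group $\Z_L$; everything else is mechanical.

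First I would unpack the trace as a Frobenius inner product. Writing $[\cM(v)]_{ij}=v(i-j)$ and $[\cM(w)^*]_{ki}=\overline{[\cM(w)]_{ik}}=\overline{w(i-k)}$, I compute
\[
\mathrm{Tr}[\cM(v)\cM(w)^*]=\sum_{i}\sum_{k}[\cM(v)]_{ik}\,[\cM(w)^*]_{ki}=\sum_{i \in \Z_L}\sum_{k \in \Z_L} v(i-k)\,\overline{w(i-k)}.
\]
The key step is the reindexing: for each fixed $i$, as $k$ ranges over $\Z_L$ the difference $m=i-k$ also ranges over all of $\Z_L$, so the inner sum equals $\sum_{m \in \Z_L} v(m)\overline{w(m)}=\lg v,w\rg$, independently of $i$. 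Summing over the $L$ choices of $i$ then gives $\mathrm{Tr}[\cM(v)\cM(w)^*]=L\lg v,w\rg$. Applying the polarized form of Theorem~\ref{thm:Plancherel}, namely $\lg v,w\rg=\tfrac1L\lg\hat v,\hat w\rg$, upgrades this to $L\lg v,w\rg=\lg\hat v,\hat w\rg$, which is exactly \eqref{eq:iprod}.

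To obtain \eqref{eq:norm} I would then set $w=v$: this yields $\|\cM(v)\|_F^2=\mathrm{Tr}[\cM(v)\cM(v)^*]=L\|v\|_2^2=\|\hat v\|_2^2$, so that $\|\cM(v)\|_F=\sqrt{L}\,\|v\|_2=\|\hat v\|_2$. (I note in passing that the final expression as displayed in the statement, $\|\hat v\|_2^2$, carries a stray exponent; the correct identity is $\|\cM(v)\|_F=\|\hat v\|_2$, consistent with the squared version $\|\cM(v)\|_F^2=\|\hat v\|_2^2$.) There is no real obstacle here beyond the $\Z_L$ index counting in the reindexing step; the substance of the lemma is precisely that every difference $i-k$ is attained by exactly $L$ index pairs, which is what produces the factor of $L$.
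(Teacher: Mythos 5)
Your proof is correct, but it inverts the logical structure of the paper's argument. The paper proves the norm identity \eqref{eq:norm} first, by the direct computation $\|\cM(v)\|_F^2=\sum_{i,j}|v(i-j)|^2=L\|v\|_2^2=\|\hat v\|_2^2$ (the same counting you use: each difference in $\Z_L$ is attained by exactly $L$ index pairs), and then obtains the bilinear identity \eqref{eq:iprod} abstractly, by polarization together with the linearity of the map $v\mapsto\cM(v)$. You instead compute the trace $\mathrm{Tr}[\cM(v)\cM(w)^*]=\sum_{i,k}v(i-k)\overline{w(i-k)}=L\lg v,w\rg$ directly and then specialize $w=v$, so the general statement comes first and the norm identity is its diagonal case. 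The computational core (the reindexing over $\Z_L$ and the appeal to Parseval--Plancherel) is identical in both; what your route buys is self-containedness, since you never need to invoke a polarization identity, at the modest cost of carrying the bilinear bookkeeping through the computation, whereas the paper keeps the explicit computation to the simplest (diagonal) case and leans on the standard algebraic step. Your parenthetical remark is also correct and worth keeping: as displayed, \eqref{eq:norm} has a typographical slip, since $\|\cM(v)\|_F=\sqrt{L}\|v\|_2=\|\hat v\|_2$ (unsquared), the squared form being $\|\cM(v)\|_F^2=L\|v\|_2^2=\|\hat v\|_2^2$, which is exactly what the paper's own proof establishes.
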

\begin{proof}
We have,
\begin{align*}
&\|\cM(v)\|_F^2\\
=& \sum_{i=1}^L \sum_{j=1}^L |v(i-j)|^2 \\
=& \sum_{k \in \Z_L} L|v(k)|^2 \\
=& L\|v\|_2^2 \\
=& \|\hat{v}\|_2^2 \quad \text{[by the Parseval-Plancherel Theorem]}
\end{align*}
which completes the proof of \eqref{eq:norm}.

The equality \eqref{eq:iprod} follows from \eqref{eq:norm} via polarization, wherein we make use of the fact that $\|\cM(v)\|_F^2=\mathrm{Tr}[\cM(v)\cM(v)^*]$ and that the mapping $v \mapsto \cM(v)$ is linear.
\end{proof}

We are now ready to state the following lemma.
\begin{lem} \label{lem:2nd-moment}
For any $\t,\phi \in \R^L$ such that $h=\phi-\t$, we have \[ \E_{\bG}[(G\phi)^{\otimes 2}] - \E_{\bG}[(G\t)^{\otimes 2}] = \frac{1}{L} \cdot \left(\cM(\t \ast \check{h}) + \cM(\check{\t} \ast h) + \cM(h \ast \check{h})   \right). \]
\end{lem}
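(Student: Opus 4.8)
The plan is to reduce everything to Lemma \ref{lem:Toeplitz}, which already gives a clean Toeplitz expression for a second moment tensor. Applying that lemma with $v_1=v_2=\phi$ yields $\E_{\bG}[(G\phi)^{\otimes 2}]=\frac{1}{L}\cM(\phi \ast \check{\phi})$, and with $v_1=v_2=\t$ yields $\E_{\bG}[(G\t)^{\otimes 2}]=\frac{1}{L}\cM(\t \ast \check{\t})$. So the left-hand side of the claim is exactly $\frac{1}{L}\big(\cM(\phi \ast \check{\phi}) - \cM(\t \ast \check{\t})\big)$, and the whole proof becomes an algebraic expansion of $\phi \ast \check{\phi}$ in terms of $\t$ and $h$.

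First I would record the three structural facts I will use: the reflection map $v \mapsto \check{v}$ is linear, the convolution $\ast$ on $\Z_L$ is bilinear, and the map $v \mapsto \cM(v)$ is linear (this last fact is already invoked in the proof of Lemma \ref{lem:difference}). Writing $\phi = \t + h$, linearity of the reflection gives $\check{\phi} = \check{\t} + \check{h}$. Then bilinearity of $\ast$ expands
\[
\phi \ast \check{\phi} = \t \ast \check{\t} + \t \ast \check{h} + h \ast \check{\t} + h \ast \check{h}.
\]
Subtracting $\t \ast \check{\t}$ cancels the first term, so
\[
\phi \ast \check{\phi} - \t \ast \check{\t} = \t \ast \check{h} + h \ast \check{\t} + h \ast \check{h}.
\]

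Finally I would apply linearity of $\cM$ to distribute over this sum, obtaining $\frac{1}{L}\big(\cM(\t \ast \check{h}) + \cM(h \ast \check{\t}) + \cM(h \ast \check{h})\big)$. To match the stated form verbatim, the only remaining point is that $h \ast \check{\t} = \check{\t} \ast h$, which holds because convolution on the abelian group $\Z_L$ is commutative; this converts the middle term into $\cM(\check{\t} \ast h)$ and completes the identification.

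There is essentially no serious obstacle here: the statement is a bookkeeping identity built on bilinearity and commutativity of convolution together with linearity of $\cM$, once Lemma \ref{lem:Toeplitz} is in hand. The only place to be careful is tracking the reflection operations $\check{(\cdot)}$ through the expansion and confirming the commutativity step that symmetrizes the cross term; both are routine on $\Z_L$. If one preferred to avoid citing Lemma \ref{lem:Toeplitz}, the same identity could be derived directly by expanding $(G\phi)^{\otimes 2} = (G\t + Gh)^{\otimes 2}$ entrywise and averaging over $G \in \cG$, but routing through Lemma \ref{lem:Toeplitz} keeps the argument shortest and makes the Toeplitz/convolution structure transparent.
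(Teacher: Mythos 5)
Your proof is correct, and it rests on the same key ingredient as the paper's: Lemma \ref{lem:Toeplitz}. The difference is purely one of ordering, and the two arguments are mirror images of each other. The paper expands on the tensor side first, writing $(G(\t+h))^{\otimes 2} = (G\t)^{\otimes 2} + G(\t \otimes h)G^* + G(h \otimes \t)G^* + (Gh)^{\otimes 2}$, cancels the $(G\t)^{\otimes 2}$ terms, and then applies Lemma \ref{lem:Toeplitz} separately to each of the three surviving pieces; this never needs linearity of $v \mapsto \cM(v)$ as a standalone fact. You instead apply Lemma \ref{lem:Toeplitz} once to each full second moment, reducing the claim to the convolution identity $\phi \ast \check{\phi} - \t \ast \check{\t} = \t \ast \check{h} + h \ast \check{\t} + h \ast \check{h}$, and then invoke linearity of $\cM$ to distribute it back. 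The two routes cost the same; yours trades the tensor bookkeeping for convolution bookkeeping plus an explicit appeal to linearity of $\cM$ (which the paper does establish in the proof of Lemma \ref{lem:difference}, so the citation is legitimate). One small point in your favor: the paper's proof ends with the middle term written as $\cM(h \ast \check{\t})$ and silently identifies it with the $\cM(\check{\t} \ast h)$ appearing in the statement, whereas you explicitly flag that this step uses commutativity of convolution on the abelian group $\Z_L$. Your closing remark about deriving the identity by expanding $(G\t + Gh)^{\otimes 2}$ entrywise and averaging is, in fact, exactly the paper's route.
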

\begin{proof}
We have,
\begin{align*}
&\E_{\bG}[(G\phi)^{\otimes 2}] - \E_{\bG}[(G\t)^{\otimes 2}]  \\
= &\E_{\bG}[(G(\t + h))^{\otimes 2}] - \E_{\bG}[(G\t)^{\otimes 2}] \\
= &\E_{\bG}[(G\t)^{\otimes 2}]+ \E_{\bG}[(Gh)^{\otimes 2}] + \E_{\bG}[G(\t \otimes h)G^*] + \E_{\bG}[G(h \otimes \t)G^*] - \E_{\bG}[(G\t)^{\otimes 2}] \\
= &  \E_{\bG}[G(\t \otimes h)G^*] + \E_{\bG}[G(h \otimes \t)G^*] + \E_{\bG}[(G h)^{\otimes 2}] \\
= & \frac{1}{L} \cdot \left[ \cM(\t \ast \check{h}) +  \cM(h \ast \check{\t}) + \cM( h \ast \check{h})   \right]   \\
 & \quad \text{(using Lemma \ref{lem:Toeplitz})} \\
\end{align*}
as desired.
\end{proof}
Lemma \ref{lem:2nd-moment} shows, in particular, that in the regime of small $h$, the linearisation (in $h$) of the second moment difference tensor is given by $J(\t,h):=\frac{1}{L} \cdot \left(\cM(\t \ast \check{h}) + \cM(\ct \ast h)  \right)$, which we will focus on in the proof of Lemma \ref{lem:lbound-impossibility} that follows.

\begin{proof}[Proof of Lemma \ref{lem:lbound-impossibility}]
In order to construct a sequence $\{\t_k\}_{k}$ as in the statement of the present Lemma, it would suffice to demonstrate the existence of  $\t$ arbitrarily close to $\t_0$ such that $\bt=\bt_0$ and \eqref{eq:lb-imp} is satisfied . We recall that, for $\vr(\t,\t_0)$ small enough, we may take $\vr(\t,\t_0)=\|\t-\t_0\|_2$.

In what follows, we will set $h:=\t-\t_0$.

We begin with
\begin{align*}
&\|\cM(\t_0 \ast \ch) + \cM(\ct_0 \ast h)\|_F^2 \\
= & \|\cM(\t_0 \ast \ch)\|_F^2 + \|\cM(\ct_0 \ast h)\|_F^2 + \mathrm{Tr}[\cM(\t_0 \ast \ch)\cM(\ct_0 \ast h)^*] \\
& \qquad \qquad + \mathrm{Tr}[\cM(\t_0 \ast \ch)^* \cM(\ct_0 \ast h)] \\
= &  \|\cM(\t_0 \ast \ch)\|_F^2 + \|\cM(\ct_0 \ast h)\|_F^2 + 2 \Re\left(\mathrm{Tr}[\cM(\t_0 \ast \ch)\cM(\ct_0 \ast h)^*]\right) \numberthis \label{eq:T0} \\
\end{align*}

Using Lemma \ref{lem:difference}, we deduce that $\|\cM(\t_0 \ast \ch)\|_F = \sqrt{L} \|\t_0 \ast \ch\|_2$. Using Parseval-Plancherel's Theorem, we deduce that \[\|\t_0 \ast \ch\|_2= \frac{1}{\sqrt{L}}\|\widehat{\t_0 \ast \ch}\|_2=\frac{1}{\sqrt{L}}\|\hht \cdot \hat{\ch}\|_2,\] where for two vectors $u,v \in \R^L$, the quantity $u  \cdot v$ denotes their co-ordinate wise product.  We further introduce the notations that, for any vector $v \in \R^L$, we denote by $|v|$ the vector given by $|v|(i)=|v(i)| ~\forall i \in \Z_L$, and by $v^2$ we denote the vector given by $v^2(i)=v(i)^2 ~\forall i \in \Z_L$.
 We may deduce from definition that, for any vector $v \in \R^L$, we have $\hat{\check{v}}=\ol{\hat{v}}$, which further leads to
\begin{equation} \label{eq:T1}
\|\cM(\t_0 \ast \ch)\|_F^2 = L \cdot \frac{1}{L}\|\hht \cdot \ol{\hh}\|_2^2 =  \lg |\hht|^2, |\hh|^2 \rg.
\end{equation}
 Similarly, we can deduce that
  \begin{equation} \label{eq:T2}
  \|\cM(\ct_0 \ast h)\|_F^2 = \|\hat{\ct} \cdot \hh\|_2^2 =  \lg |\hht|^2, |\hh|^2 \rg.
  \end{equation}
and
\begin{align*}
&\Re\left(\mathrm{Tr}[\cM(\t_0 \ast \ch)\cM(\ct_0 \ast h)^*] \right) \\
= & L \cdot \Re \lg \t_0 \ast \ch, \ct_0 \ast h \rg  \quad \text{[Using Lemma \ref{lem:difference}]} \\
= & L \cdot \frac{1}{L} \Re \lg \widehat{\t_0 \ast \ch}, \widehat{\ct_0 \ast h} \rg \quad \text{[Using the Parseval-Plancherel Theorem]}  \\
= & \Re \lg \hht_0 \cdot \hat{\ch}, \hat{\ct}_0 \cdot \hh \rg \\
= & \Re \lg \hht_0 \cdot \ol{\hh}, \ol{\hht}_0 \cdot \hh \rg. \numberthis \label{eq:T3} \\
\end{align*}

Combining \eqref{eq:T0}-\eqref{eq:T3}, we may deduce that
\begin{align*}
&\|\cM(\t_0 \ast \ch) + \cM(\ct_0 \ast h)\|_F^2 \\
= &  2 \lg |\hht|^2, |\hh|^2 \rg + 2 \Re \lg \hht_0 \cdot \ol{\hh}, \ol{\hht}_0 \cdot \hh \rg \\
= & \left[   \sum_{\xi \in \Z_L} 2 \left( |\hht(\xi)|^2|\hh(\xi)|^2 + \Re \left(\hht(\xi)^2 [\ol{\hh(\xi)}]^2 \right)    \right)    \right] \numberthis \label{eq:Fourier-1} \\
\end{align*}

For $\t_0 \in T$ as in the statement of the Theorem, we propose to choose $h$ such that 
\begin{equation} \label{eq:h_choice}
\left( |\hht(\xi)|^2|\hh(\xi)|^2 + \hht(\xi)^2 [\ol{\hh(\xi)}]^2    \right)=0 ~\forall \xi \in \Z_L.
\end{equation} 
This would be possible because of the following reasons; using the fact that the Fourier transform is a bijection, we will determine the choice of $h$ in the Fourier domain..

First, we set $\hh(0)=0$, which will come in handy later. To set the coordinates $\hh(\xi)$ for $\xi \ne 0$, we proceed as follows. Recall that the only restriction on vectors in the signal class $\cT$ is that they have full support and their co-ordinates assume real values between $m$ and $M$. This translates into the fact that the only restriction on the difference $h$ of two such signals in the interior of $\cT$ is that $h$ has real co-ordinates (as long as $\|h\|_2$ is small enough). This implies that the only restriction on the Fourier transform $\hh$ is that $\hh$ is symmetric (the essential reason for which is that the Fourier transform is surjective from $\R^L$ to symmetric vectors in $\mathbb{C}^L$). So, for any given $\t_0 \in T$, we choose $\hh(\xi)$ (for $\xi \ne 0$) such that $|\hh(\xi)|$ is small enough and 
\begin{equation} \label{eq:angle}
2 \arg(\hh(\xi))=\pi - 2 \arg(\hht_0(\xi)),
\end{equation} 
which ensures that $\left( |\hht(\xi)|^2|\hh(\xi)|^2 + \hht(\xi)^2 [\ol{\hh(\xi)}]^2    \right)=0$. The symmetry condition on $\hh$ can be satisfied because, $\t_0 \in T$ implies that $\hht_0$ is symmetric, which allows us to choose $\hh$ as in \eqref{eq:angle} so that the vector $\hh$ is indeed symmetric. 

The upshot of the \eqref{eq:h_choice} is that $\|\cM(\t_0 \ast \ch) + \cM(\ct_0 \ast h)\|_F=0$, which implies that $\cM(\t_0 \ast \ch) + \cM(\ct_0 \ast h)=0$. Thus, Lemma \ref{lem:2nd-moment} implies that \[\|\E_{\bG}[(G\phi)^{\otimes 2}] - \E_{\bG}[(G\t)^{\otimes 2}] \|_F = \|\cM(h \ast \ch)\|_F. \]

But then we have
\begin{align*}
&\|\cM(h \ast \ch)\|_F^2
=  L \|h \ast \ch\|_2^2 \quad \text{[Using Lemma \ref{lem:difference}]}  \\
= &  L \cdot \frac{1}{L}  \|\widehat{h \ast \ch}\|_2^2 
=    \|\hh \cdot \ol{\hh}\|_2^2 \quad \text{[By Parseval-Plancherel Theorem]}  \\
= &  \left(\sum_{\xi \in \Z_L}  |\hh(\xi)|^4\right ) \numberthis \label{eq:4th-moment} \\
\le &  \left(\sum_{\xi \in \Z_L}  |\hh(\xi)|^2 \right)^2 \quad \text{[By Cauchy-Schwarz]}  \\
= & \|\hh\|_2^4 = L^2 \|h\|_2^4. \quad \text{[By Parseval-Plancherel]}  \numberthis \label{eq:higher-order}
\end{align*}

The upshot of this is that $\|\E_{\bG}[(G\phi)^{\otimes 2}] - \E_{\bG}[(G\t)^{\otimes 2}] \|_F \le C_L \|h\|_2^2$, thereby verifying the condition \eqref{eq:lb-imp}. 

It remains to verify the condition $\bt=\bt_0$; equivalently, $\overline{h}=0$. Observe that $\hh(0)=\sum_{x \in \Z_L} h(x) = L \overline{h}$. However, we have already set $\hh(0)=0$, which therefore implies that $\overline{h}=0$. This entails that $\bt=\bt_0$, thereby completing the proof.

\end{proof}

\section{The regime of moderate sparsity and uncertainty principles} \label{sec:mod-sparsity}
In this section, we establish Theorems \ref{thm:sparse-symm-G} and Lemma \ref{lem:symm-tech}, in the process invoking Uniform Uncertainty Principles from the discrete Fourier analysis of sparse vectors. We will proceed as follows. First, we will establish the technical Lemma \ref{lem:symm-tech}. Next, we will verify that the generic signals considered in Theorem \ref{thm:sparse-symm-G} satisfy the conditions of Lemma \ref{lem:symm-tech}, thereby completing the proof of Theorem \ref{thm:sparse-symm-G}. Finally, we will demonstrate the genericity of  support for the symmetric Bernoulli Gaussian and $N_{[-s,s]}^{\mathrm{symm}}(0,\zeta^2I)$ distributions. 

\subsection{Proof of Theorem \ref{thm:sparse-symm-G} and Lemma \ref{lem:symm-tech}} \label{sec:mod-sparsity-thmpf}

\begin{proof}[Proof of Lemma \ref{lem:symm-tech}] 
Our strategy would involve demonstrating a lower bound on the norm of the second moment difference tensor $\Delta_2(\varphi,\t)=\E_\bG[(G \varphi)^{\otimes 2}] - \E_\bG[(G\t)^{\otimes 2}]$ that is linear in the distance $\rho(\varphi,\t)$. That is, $\| \Delta_2(\varphi,\t)\|_F \ge C(L) \cdot \rho(\varphi,\t)$. We will do so for $\vp \in \cT$ lying in a neighbourhood of $\t$. Once such a lower bound is obtained, the theorem will follow from Proposition \ref{prop:linear_LB} and Proposition \ref{prop:CurveLB-normality} part (i).

We will work in the local neighbourhood of $\t$,  so that $\varrho(\varphi,\t)\le r(L)$; the precise size of $r(L)$ will be specified later. If $r(L)$ is small enough, then without loss of generality, we may write $\rho(\vp,\t)=\|\vp-\t\|_2=\|h\|_2$, which is the formulation that we will work with. 

Notice that, because of the lower bound in part (ii) of the statement of this Lemma, we have both $\min_\{i \in \supp(\vp) \} |\vp(i)| , \min_\{i \in \supp(\t) \} |\t(i)| \ge m_\cT$. As such, if $r(L)$ is small enough such that $r(L) < m_\cT$, we have $\|\vp-\t\|_2=\|h\|_2 < m_\cT$. This implies that, for $r(L)$ small enough, the supports of $\vp$ and $\t$ must coincide. In particular, the difference $h=\vp-\t$ must satisfy $\supp(h) \subseteq \supp(\t)$. This enables us to invoke condition (iii-a) in the statement of the Lemma for such $h$, which we shall use below.

Since $\t,\varphi$ are symmetric, $h=\varphi-\t$ are also symmetric. This implies that $\t=\ct$ and $h=\ch$, and both  Fourier transforms $\hht$ and $\hh$ are real-valued.
From Lemma \ref{lem:2nd-moment} we may deduce that
\begin{equation} \label{eq:2nd-moment-LB-1}
\|\E_{\bG}[(G\varphi)^{\otimes 2}] - \E_{\bG}[(G\t)^{\otimes 2}] \|_F \ge \frac{2}{L} \cdot \|\cM(\t \ast h)\|_F - \frac{1}{L}\|\cM(h \ast h)\|_F.
\end{equation}

Using \eqref{eq:Fourier-1} and \eqref{eq:4th-moment}, we may further simplify this to
\begin{align*}
\|\Delta(\vp,\t)\|_F = &\|\E_{\bG}[(G\varphi)^{\otimes 2}] - \E_{\bG}[(G\t)^{\otimes 2}] \|_F  \\
\ge &\frac{1}{L}\left(4\cdot  \sum_{\xi \in \Z_L} |\hht(\xi)|^2|\hh(\xi)|^2\right)^{1/2} - \frac{1}{L} \left( \sum_{\xi \in \Z_L}  |\hh(\xi)|^4\right)^{1/2}. \numberthis \label{eq:2nd-moment-LB-2}
\end{align*}
The second term on the right hand side will be controlled using the fact that $\|h\|_2 \le r(L)$, a consequence of the fact that $\vr(\vp,\t) \le r(L)$. To demonstrate this, we consider $\|h\|_{\infty}=\sup_{\xi \in \Z_L}|\hh(\xi)|$. For any $\xi \in \Z_L$, we observe via the Cauchy Schwarz inequality that 
\begin{equation} \label{eq:FT_upper_bound}
|\hh(\xi)|= \left|\sum_{k\in \Z_L}h(k)\exp\left(\frac{2\pi i k}{L}\right)\right| \le |\supp(h)| \cdot \|h\|_2 \le L \cdot \|h\|_2.
\end{equation}

Using the Parseval-Plancherel Theorem, we may proceed as
\begin{align*}
&\left(\sum_{\xi \in \Z_L} |\hh(\xi)|^4 \right)^{1/2} \\
\le & ~\|h\|_{\infty} \left(\sum_{\xi \in \Z_L}  |\hh(\xi)|^2 \right)^{1/2}  \\
\le & ~L  ~\|h\|_2\|\hh\|_2  \quad \text{[using \eqref{eq:FT_upper_bound}]} \\ 
\le & ~L \sqrt{L} ~\|h\|_2^2 \quad \text{[via Parseval-Plancherel Thoerem]}\\
\le & ~L \sqrt{L} ~r(L) \cdot \|h\|_2. \quad \text{[Since $\|h\|_2 \le r(L)$]} \numberthis \label{eq:symm-tech-UB}
\end{align*}
Thus, if we are able to show in the context of \eqref{eq:2nd-moment-LB-2} that $\left(4\sum_{\xi \in \Z_L} |\hht(\xi)|^2|\hh(\xi)|^2\right)^{1/2}$ is bounded below by $c(L)\|h\|_2$, then as soon as $r(L)$ is chosen to be small enough such that 
$ L  \sqrt{L} ~r(L)\le \frac{1}{2}c(L)$, we will be done (via \eqref{eq:2nd-moment-LB-2}) with an overall lower bound on $\|\Delta(\vp,\t)\|_F$ by $\frac{1}{2L} \cdot c(L) \cdot \|h\|_2$. 

In view of this, we will henceforth focus attention to lower-bounding  \linebreak $\left(4 \cdot  \sum_{\xi \in \Z_L} |\hht(\xi)|^2|\hh(\xi)|^2\right)^{1/2}$.

To this end, we recall the set $\L$ and the quantity $\fm(\cT)$ from the defining criteria of $\cT$, and proceed as
\begin{align*}
&\frac{1}{L} \cdot  \sum_{\xi \in \Z_L} |\hht(\xi)|^2|\hh(\xi)|^2 \quad = \frac{1}{L} \cdot  \sum_{\xi \in \Z_L} |\widehat{\t \ast h}(\xi)|^2 \\
\ge &c_2^{-1} \frac{1}{|\L|} \cdot  \sum_{\xi \in \L} |\widehat{\t \ast h}(\xi)|^2 \quad \text{(since $\t \ast h$ is 4s-sparse)} \\
= &c_2^{-1} \cdot \frac{1}{|\L|} \cdot  \sum_{\xi \in \L} |\hht(\xi)|^2|\hh(\xi)|^2 \quad \ge c_2^{-1} \cdot \frac{1}{|\L|} \cdot  \sum_{\xi \in \L}\fm(\cT)^2|\hh(\xi)|^2 \\
= &c_2^{-1} \fm(\cT)^2 \cdot \frac{1}{|\L|} \cdot  \sum_{\xi \in \L}|\hh(\xi)|^2 \quad \ge  c_1c_2^{-1} \fm(\cT)^2 \cdot \frac{1}{L} \cdot  \sum_{\xi \in \Z_L}|\hh(\xi)|^2 \\
= & c_3^2 \cdot \fm(\cT)^2\|h\|_2^2, \numberthis \label{eq:symm-tech-LB}
\end{align*}
where $c_3 = \sqrt{c_1c_2^{-1}}$.

We consider \eqref{eq:symm-tech-LB} in the context of \eqref{eq:symm-tech-UB} and the discussion immediately following it. With $c(L) = 2 c_3 \cdot \sqrt{L} \cdot \fm(\cT)$, we may conclude that $\left(4\sum_{\xi \in \Z_L} |\hht(\xi)|^2|\hh(\xi)|^2\right)^{1/2} \ge c(L)\|h\|_2$, and therefore $\|\Delta(\vp,\t)\|_F \ge \frac{1}{2L} \cdot c(L) \cdot \|h\|_2 \ge c_4 \cdot  \frac{\fm(\cT)}{\sqrt{L}} \cdot \|h\|_2$ for a suitable positive constant $c_4$. Recalling that $ \rho(\phi,\t)=\|h\|_2$, 
we obtain the desired lower bound on the second moment tensor difference in a neighbourhood of $\t$:
\begin{equation} \label{eq:curv_LB_moderate}
\|\Delta(\phi,\t)\|_F=\|\E_{\bG}[(G\varphi)^{\otimes 2}] - \E_{\bG}[(G\t)^{\otimes 2}]\|_F \ge c_4 \cdot \frac{\fm(\cT)}{\sqrt{L}} \cdot \rho(\phi,\t).
\end{equation}
Combined with the discussion at the beginning of this proof, this completes the argument.
\end{proof}

We now discuss the proof of Theorem \ref{thm:sparse-symm-G}.

\begin{proof}[Proof of Theorem \ref{thm:sparse-symm-G}]
Our approach to this proof will involve demonstrating that the set of signals $\cT=\cT_s$, as in the statement of Lemma \ref{lem:symm-tech}, has high probability under the generative model for the signal in the present theorem. We will do this by showing below that each of the criteria (i)-(iii) in Lemma \ref{lem:symm-tech} has high probability under the conditions of our current theorem.

\vspace{10pt}

\underline{(i)}: This condition is trivially satisfied by the present generative model, by definition.

\vspace{10pt}

\underline{(ii)}: We now consider the upper and lower bounds on the signal $\t$ on $\supp(\t)=\Xi$. In doing so, we will use the fact that the support $\Xi$ is \textit{typically $s$-sparse} with sparsity constants $(\a,\b)$, which implies that $\a s \le |\Xi| \le \b s$ with high probability.

But $\max_{k \in  \supp(\t)}|\t(k)| \le \max\{\xi_k:k\in \supp(\t)\}$, where the $\xi_k$-s are i.i.d. centred Gaussians with variance  $\vs^2$, is given by $O_p(\vs \sqrt{2\log |\Xi| }) =O_p( \vs \sqrt{\log s})$. This enables us to set $M_{\cT_s}=c \vs \log s$ in order to ensure that the maximum condition is satisfied with high probability.

Similarly, $\min_{k \in  \supp(\t)}|\t(k)| \le \min\{\xi_k:k\in \supp(\t)\}$ will  decay (in $|\Xi|$) as $x$ such that $x|\Xi|/\vs=O_p(1)$; this follows from the functional form of the Gaussian density $N(0,\vs^2)$. Thus, it suffices to take $m_{\cT_s}=c\min_{\Xi} \left( \vs/|\Xi|\log |\Xi| \right)=c\vs/s\log s$ in order to ensure that the minimum condition is satisfied with high probability.

\vspace{10pt}

\underline{(iii)}: We now come to the consideration of the desirable set of frequencies $\L$. We will demonstrate the existence of such a frequency set by an \textit{enhanced} version of the probabilistic method. While we will use randomness for finding the desirable set $\L$, it may be observed that the typical random frequency set of the right size would not satisfy the stipulated conditions on $\L$. Instead, we will require additional considerations in order to show the \textit{existence} of \textit{one such (atypical) set}. To this end, we would require certain auxiliary technical results, which are encapsulated in Lemmas \ref{lem:RV}, \ref{lem:good-set} and \ref{lem:neg-moment}.

To begin with, we
\[\text{Draw a random subset $\L \subset \Z_L$ having expected size $a$, that is } \] 
\begin{equation} \label{eq:random_set}
 \text{ each element of $\Z_L$ can be in $\L$ independently with probability $a/L$, } 
\end{equation}
where $a$ is a positive number slightly larger than the sparsity parameter $s$, to be specified in detail later.
By Lemma \ref{lem:RV}, such a set $\L$ will satisfy the condition (iii)(a) in the present Lemma for all $s$-sparse vectors $h$ with probability 
\begin{equation} \label{eq:s-ubound}
\P[\L \text{ satisfies (iii)(a) for all } s-\text{sparse vectors } h] \ge 1-5\exp(-c a(s \log^4 L)^{-1}), 
\end{equation}
the probability in question being in the randomness of $\L$. We say that a subset $A \subset \Z_L$  satisfies the Uniform Uncertainty Principle for $s$-sparse vectors (abbrv. s-UUP), if it satisfies \eqref{eq:RV}. Observe that $a$, being equal to $|\L|$ and $\L \subseteq \Z_L$, needs to satisfy $a \le L$. In view of this, to ensure that $\L$ satisfies (iii)(a) with high probability (as $s,L$ grow large), we need to have $a(s \log^4 L)^{-1} \to \infty$ in \eqref{eq:s-ubound}, which is equivalent to $s \ll L/(\log^4 L)$, which in turn is ensured by the condition $L \le  L/(\log^5 L)$ for $L$ large enough.

We now work towards showing that with positive, albeit vanishingly small probability, the condition (iii)(b) is also satisfied. To that end, we notice that while (iii)(a) is valid for all $s$-sparse vectors $h$,  we need to verify (iii)(b) only for the signal class of our interest - fact that we will crucially exploit in our considerations.

We begin with the observation that, since $\Xi$ is ($s^\tau$)-cosine generic, with high probability in the set $\Xi$ we have the inequality $\min_{\xi \in \Z_L} |\cC(\Xi,\xi)| \ge s^\tau$. For such a set $\Xi$, and any $\eta \in (0,1)$, we invoke the quantity $\fa(\Xi,\eta,\vs)=C(1-\eta)^{-1}\vs^{-\eta}\left(\min_{\xi \in \Z_L}\cC(\Xi,\xi)\right)^{-\frac{1}{2}\eta}$ (c.f. \eqref{eq:frak_a}), which immediately leads to  the bound $\fa(\Xi,\eta,\vs)\le C(1-\eta)^{-1}\vs^{-\eta}s^{-\frac{1}{2}\tau \eta}$.

For  $\k>0$ to be specified later, notice that this implies, with high probability in the signal $\t$, that 
\[\fa(\Xi,\eta,\vs) |\Xi|^{- \frac{1}{2} \k \eta} \le C_\eta \vs^{-\eta}s^{-\frac{1}{2}(\k+\tau)\eta}\] 
with $C_\eta=C(1-\eta)^{-1}$, which is small in the regime of large $s$ (and fixed $\vs$) as soon as $\k + \tau >0$. In light of Lemma \ref{lem:good-set} and the defining equation \eqref{eq:good-set}, we may therefore conclude that, with high probability in the signal $\t$, we have the inequality $|\hht(\xi)| \ge s^{-\k}$ for a set $\fS(\k)$ of frequencies $\xi$ satisfying $|\fS(\k)| \ge L(1-C_\eta \vs^{-\eta} s^{-\frac{1}{2}(\k+\tau)\eta})$.


 

For a given $\tau$, we now select $\k=\max\{4-\tau, 0\}$, which implies in particular that $\frac{1}{2}(\k+\tau) \ge 2$ and automatically $\k+\tau>0$. We then choose $\eta=3/4$, leading to the bound $|\fS(\k)| \ge L(1- c \vs^{-1/2} s^{-3/2})$. On this set $\fS(\k)$, with high probability in the signal $\t$, we have $|\hht(\xi)|\ge c \min\{s^{\tau-4},1\}$.

We now examine carefully a randomly sampled  subset $\L \subset \Z_L$ with average size $a$, as in \eqref{eq:random_set}. We want to understand $\P[\L \subset \fS(\k)]$, equivalently, $\P[\fS(\k)^\c \subset \L^\c]$.  Observe from the discussion above that $|\fS(\k)^\c|\le cL\vs^{-1/2}s^{-3/2}$, and note that the probability of a particular frequency $\xi$ to belong to $\L^\c$ is $(1-a/L)$. Since each frequency in $\Z_L$ is chosen to belong to $\L$ independently of each other,  we may deduce that, as long as $a/L$ remains bounded away from 1, we have
\begin{align*}
\P[\L \subset \fS(\k)] = &\P[\fS(\k)^\c \subset \L^\c] \\ =& \left( 1-a/L\right)^{|\fS(\k)^\c|} \\ \ge &\exp(-c'\frac{a}{L}\cdot cL\vs^{-1/2}s^{-3/2}) \\ = &\exp(-c''\vs^{-1/2}a s^{-3/2}).  \numberthis \label{eq:LB}
\end{align*}

We may then proceed as
\begin{align*}
&\P[ \{\L \text{ is s-UUP } \} \cap \{ \L \subset \fS(\k) \} ] \\
= &\P[  \{ \L \subset \fS(\k) \}  \setminus  \{\L \text{ is s-UUP } \} ^\c  ] \\
\ge &\P[   \L \subset \fS(\k)  ] -   \P[ \{\L \text{ is s-UUP } \} ^\c  ] \\
\ge & \exp(-c''\vs^{-1/2}a s^{-3/2}) - 5\exp(-c_3a s^{-1} \log^{-4} L)) \text{[c.f. \eqref{eq:LB} and Lemma \ref{lem:RV}]} \numberthis \label{eq:difference}
\end{align*}
The last expression in \eqref{eq:difference} is positive as soon as $\vs^{-1/2}s^{-3/2} \ll s^{-1} \log^{-4} L$, equivalently $s \gg \log^8 L/\vs$ in the regime of large $L$. The latter condition, in turn, is guaranteed by $s \ge \log^9 L$, as soon as $L$ is large enough.

In this regime, i.e. when $s \gg \log^8 L/\vs$, we have \[\P[ \{\L \text{ is s-UUP } \} \cap \{ \L \subset \fS(\k) \} ]>0,\] implying that there exists a realisation of the subset $\L$ such that:
\begin{itemize}
\item[(i)]  $\L$ satisfies \eqref{eq:RV} for all $s$-sparse signals.
\item[(ii)]  $ \min_{\xi \in \L} |\hht(\xi)| \ge c \min\{s^{\tau-4},1\}$.
\end{itemize}
These two facts together establish the existence of a frequency set $\L$ as required in condition (iii) of Lemma \ref{lem:symm-tech}, with
\begin{equation} \label{eq:explicit-LB}
\fm(\cT_s)=c \min\{s^{\tau-4},1\}
 \end{equation}
as above. This completes the argument for (iii), and therefore completes the proof of the present theorem.
\end{proof}

\begin{rem} \label{rem:L-free_LB}
We observe that in \eqref{eq:explicit-LB} the lower bound $\fm(\cT_s)=c \min\{s^{\tau-4},1\}$, in fact, depends only on $s$ (and not on $L$).
\end{rem}

\begin{rem} \label{lem:s-scaling}
In view of \eqref{eq:s-ubound} and the discussion immediately thereafter, we note that it suffices to have $s \le L/(\log^5 L)$ for $L$ large enough. On the other hand, in view of  \eqref{eq:difference} and the ensuing discussion, it suffices to have $s \ge \log^9 L$.  Combining these two observations, we work in the regime of $L$ such that 
\[ \log^9 L \le s \le L/(\log^5 L).\]
\end{rem}

It remains to establish Lemmas \ref{lem:good-set} -- \ref{lem:var}, which we take up in the next section. 

\subsection{Proofs of Lemmas \ref{lem:good-set} , \ref{lem:neg-moment} and \ref{lem:var}} \label{sec:mod-sparsity-lempf}
To begin with, we invoke the following \textit{Uniform Uncertainty Principle} from \cite{RudVer08}.
\begin{lem}[\cite{RudVer08}] \label{lem:RV}
Let $\fF$ be a random set of frequencies in $\Z_L$ having expected size $a$, that is each element of $\Z_L$ can be in $\fF$ independently with probability $a/L$. Then there are fixed numbers $c_1,c_2,c_3$ such that,  simultaneously for all  $f:\Z_L \mapsto \R$ that is $s-sparse$, the event
\begin{equation} \label{eq:RV}
c_1 \cdot \frac{1}{L} \sum_{ \xi \in \Z_L} |\hat{f}(\xi)|^2 \le \frac{1}{|\fF|}\sum_{ \xi \in \fF} |\hat{f}(\xi)|^2 \le c_2 \cdot \frac{1}{L} \sum_{ \xi \in \Z_L} |\hat{f}(\xi)|^2
\end{equation}
holds with probability $\ge 1-5\exp(-c_3a(s \log^4 L)^{-1})$.
\end{lem}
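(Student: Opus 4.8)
The statement is the Restricted Isometry Property (equivalently, the Uniform Uncertainty Principle) for a randomly subsampled discrete Fourier transform, and the plan is to reduce it to a single concentration estimate for the operator norm of a centred random Gram matrix, \emph{uniformly} over all $s$-element supports. Writing $\psi_\xi \in \C^L$ for the DFT row $\psi_\xi(k)=\exp(-2\pi i \xi k/L)$, one has $\hat f(\xi)=\langle \psi_\xi, f\rangle$, and the orthogonality relations give $\sum_{\xi\in\Z_L}\psi_\xi\psi_\xi^* = L\,I$. Hence the independent selectors $\delta_\xi:=\ind\{\xi\in\fF\}$, Bernoulli$(a/L)$, satisfy $\E\bigl[\tfrac1a\sum_{\xi}\delta_\xi\,\psi_\xi\psi_\xi^*\bigr]=I$. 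Since $\tfrac1{|\fF|}\sum_{\xi\in\fF}|\hat f(\xi)|^2 = f^*\bigl(\tfrac{1}{|\fF|}\sum_{\xi}\delta_\xi\psi_\xi\psi_\xi^*\bigr)f$ and $\|f\|_2^2 = \tfrac1L\|\hat f\|_2^2$ by Theorem~\ref{thm:Plancherel}, the two-sided bound \eqref{eq:RV} is equivalent---after replacing $|\fF|$ by $a$, the two agreeing up to a $(1\pm o(1))$ factor by a Chernoff bound on the Binomial $|\fF|$---to the estimate $\|T_S-I_S\|_{\mathrm{op}}\le \delta$ with $\delta=\max\{1-c_1,c_2-1\}$, uniformly over all $S\subset\Z_L$ with $|S|=s$, where $T_S:=\tfrac1a\sum_\xi \delta_\xi\,\psi_\xi^S(\psi_\xi^S)^*$ and $\psi_\xi^S$ is the restriction of $\psi_\xi$ to coordinates in $S$.

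The crucial structural feature is that the Fourier system is a \emph{bounded orthonormal system}: $\|\psi_\xi\|_\infty=1$ for every $\xi$. Uniformity over supports is exactly what forbids a naive union bound over the $\binom{L}{s}$ choices of $S$ (far too lossy), so instead I would bound the expected supremum of the associated empirical process directly. Recasting the target as $\sup_{|S|=s}\|T_S-I_S\|_{\mathrm{op}} = \sup_{\|x\|_2=1,\,\|x\|_0\le s}\big|\tfrac1a\sum_\xi\delta_\xi|\langle\psi_\xi,x\rangle|^2 - \|x\|_2^2\big|$, the steps are: (i) symmetrize by introducing i.i.d.\ Rademacher signs $r_\xi$, reducing to the Rademacher chaos $\tfrac1a\sum_\xi r_\xi\delta_\xi\,\psi_\xi^S(\psi_\xi^S)^*$; (ii) apply Rudelson's noncommutative-Khintchine / Dudley entropy bound, using $\|\psi_\xi\|_\infty=1$ together with the combinatorial entropy of $s$-sparse unit vectors to produce the logarithmic powers; and (iii) close with the self-bounding (fixed-point) trick, whereby the resulting inequality
\[
\E\sup_{|S|=s}\|T_S-I_S\|_{\mathrm{op}} \;\lesssim\; \sqrt{\tfrac{s\log^4 L}{a}}\,\Big(1+\E\sup_{|S|=s}\|T_S-I_S\|_{\mathrm{op}}\Big)^{1/2}
\]
is solved as a quadratic to yield $\E\sup_{|S|=s}\|T_S-I_S\|_{\mathrm{op}}\lesssim \sqrt{s\log^4L/a}$, which is $\le\delta/2$ as soon as $a\gtrsim \delta^{-2}s\log^4 L$.

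Finally, to upgrade the bound in expectation to the exponential tail $1-5\exp(-c_3 a(s\log^4 L)^{-1})$, I would invoke a concentration inequality for suprema of sums of independent bounded random operators---Talagrand's inequality, or equivalently a matrix-Bernstein / bounded-difference estimate exploiting that flipping a single selector $\delta_\xi$ perturbs $T_S$ by a rank-one term of controlled norm---giving deviations of $\sup_{|S|=s}\|T_S-I_S\|_{\mathrm{op}}$ above its mean with precisely the stated sub-exponential rate in $a/(s\log^4 L)$; combined with the Chernoff control $|\fF|=a(1+o(1))$, this proves \eqref{eq:RV}. The genuine difficulty, and the heart of the Rudelson--Vershynin argument that we are quoting, lies entirely in steps (ii)--(iii): securing the uniform-over-supports chaining bound with the sharp power of $\log L$, as opposed to the routine fixed-$S$ matrix-concentration estimate. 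I would therefore carry out only the elementary reductions above and cite \cite{RudVer08} for this technical core.
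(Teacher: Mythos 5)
Your proposal is correct and takes essentially the same approach as the paper: the paper states Lemma~\ref{lem:RV} purely as an imported result, citing \cite{RudVer08} with no proof given, which is exactly what your argument ultimately does after the elementary reductions. Your preliminary steps (Plancherel, Chernoff control of $|\fF|$ versus $a$, reduction to a uniform restricted operator-norm bound) and your sketch of the Rudelson--Vershynin core (symmetrization, entropy/noncommutative-Khintchine bound, self-bounding quadratic, Talagrand-type concentration) are an accurate account of the structure of the cited proof, so there is no discrepancy to report.
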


Next, for $\k>0$ and a function $f: \Z_L \to \R$ with $\supp(f)=\Xi$, we define the set of frequencies $\fS_f(\k)$ as
\begin{equation} \label{eq:good-set}
\fS_f(\k)=\{\xi \in \Z_L: |\hf(\xi)| \ge |\Xi|^{-\k} \}.
\end{equation}

Then we are ready to state the following lemma.
\begin{lem} \label{lem:good-set}
Let $\Xi \subset \Z_L$, let 
\begin{equation} \label{eq:frak_a}
\fa(\Xi,\eta,\vs)=C(1-\eta)^{-1}\vs^{-\eta}\left(\min_{\xi \in \Z_L}\cC(\Xi,\xi)\right)^{-\frac{1}{2}\eta} 
\end{equation} 
for $C>0$ as in Lemma \ref{lem:neg-moment} and any $0<\eta<1$,  and let $f \sim N_\Xi(\mathbf{0},\vs^2\mathbf{I})$. Then, for $\k>0$, we have \[|\fS_f(\k)| \ge L\bigg(1- \fa(\Xi,\eta,\vs)|\Xi|^{-\frac{1}{2}\k \eta}\bigg)\] with probability  $\ge 1-\fa(\Xi,\eta,\vs)|\Xi|^{-\frac{1}{2}\k \eta}$.
\end{lem}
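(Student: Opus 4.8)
The plan is to combine the pointwise negative-moment estimate of Lemma \ref{lem:neg-moment} with a two-stage first-moment (Markov) argument: first to control, for each individual frequency $\xi$, the probability that $\xi$ fails to lie in $\fS_f(\k)$, and then to convert the resulting bound on the expected size of the complement $\fS_f(\k)^\c$ into a high-probability statement about $|\fS_f(\k)|$ itself.

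First I would record a uniform pointwise bound. For a fixed $\xi\in\Z_L$, the coefficient $\hf(\xi)=\sum_{k\in\Xi}f(k)e^{-2\pi i\xi k/L}$ is a centred (complex) Gaussian whose scale is governed by $\cC(\Xi,\xi)$, and Lemma \ref{lem:neg-moment} supplies the estimate $\E[|\hf(\xi)|^{-\eta}]\le C(1-\eta)^{-1}\vs^{-\eta}\cC(\Xi,\xi)^{-\eta/2}$. Replacing $\cC(\Xi,\xi)$ by its minimum over $\xi$ makes the bound uniform, giving $\E[|\hf(\xi)|^{-\eta}]\le\fa(\Xi,\eta,\vs)$ for every $\xi\in\Z_L$, with $\fa$ as in \eqref{eq:frak_a}. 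Then, working per coordinate and using that $x\mapsto x^{-\eta}$ is decreasing, Markov's inequality applied to the nonnegative variable $|\hf(\xi)|^{-\eta}$ yields
\[
\P[\xi\notin\fS_f(\k)]=\P\big[|\hf(\xi)|<|\Xi|^{-\k}\big]=\P\big[|\hf(\xi)|^{-\eta}>|\Xi|^{\k\eta}\big]\le|\Xi|^{-\k\eta}\,\E[|\hf(\xi)|^{-\eta}]\le\fa(\Xi,\eta,\vs)\,|\Xi|^{-\k\eta}.
\]
Summing over the $L$ frequencies, linearity of expectation gives $\E[|\fS_f(\k)^\c|]\le L\,\fa(\Xi,\eta,\vs)\,|\Xi|^{-\k\eta}$.

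Finally I would apply Markov's inequality a second time, now to the integer-valued variable $|\fS_f(\k)^\c|$, at the threshold $L\,\fa(\Xi,\eta,\vs)\,|\Xi|^{-\frac12\k\eta}$, which is chosen precisely to balance the size of the exceptional set against the failure probability:
\[
\P\Big[|\fS_f(\k)^\c|\ge L\,\fa(\Xi,\eta,\vs)\,|\Xi|^{-\frac12\k\eta}\Big]\le\frac{L\,\fa(\Xi,\eta,\vs)\,|\Xi|^{-\k\eta}}{L\,\fa(\Xi,\eta,\vs)\,|\Xi|^{-\frac12\k\eta}}=|\Xi|^{-\frac12\k\eta}\le\fa(\Xi,\eta,\vs)\,|\Xi|^{-\frac12\k\eta},
\]
where the last step uses $\fa\ge1$ (which holds in the regime of interest, taking the constant $C\ge1$). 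On the complementary event one has $|\fS_f(\k)^\c|<L\,\fa|\Xi|^{-\frac12\k\eta}$, i.e.\ $|\fS_f(\k)|\ge L\big(1-\fa|\Xi|^{-\frac12\k\eta}\big)$, which is exactly the assertion.

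The genuinely substantive input is Lemma \ref{lem:neg-moment}: controlling a negative moment of the magnitude of a Gaussian Fourier coefficient is where the anti-concentration/nondegeneracy quantity $\cC(\Xi,\xi)$ enters and where the $(1-\eta)^{-1}$ factor arises from integrating $|\hf(\xi)|^{-\eta}$ near the origin. Granting that lemma, the present statement is a clean double application of Markov, and the only point demanding care is the choice of the intermediate exponent $\tfrac12\k\eta$ in the second step, ensuring that both the fraction of bad frequencies and the failure probability come out of the same order $\fa(\Xi,\eta,\vs)\,|\Xi|^{-\frac12\k\eta}$.
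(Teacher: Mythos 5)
Your proof is correct and is essentially the paper's own argument: both proofs use Lemma \ref{lem:neg-moment} to obtain the first-moment bound $\E[|\fS_f(\k)^\c|]\le L\,\fa(\Xi,\eta,\vs)\,|\Xi|^{-\k\eta}$ (you get it from a per-frequency Markov bound plus linearity of expectation; the paper sums the pointwise inequality $|\Xi|^{\k\eta}\ind_{\{\xi\in\fS_f(\k)^\c\}}\le|\hf(\xi)|^{-\eta}$ over $\xi\in\Z_L$ and then takes expectations, which is the same computation), followed by a second application of Markov's inequality to $|\fS_f(\k)^\c|$.

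The one divergence is where the factor $\fa(\Xi,\eta,\vs)$ sits in the final threshold, and it is worth spelling out because this is exactly where both your proof and the paper's quietly need $\fa\ge 1$. You threshold at $L\,\fa\,|\Xi|^{-\frac12\k\eta}$, which yields the stated set-size bound verbatim but failure probability $|\Xi|^{-\frac12\k\eta}$, so you must invoke $\fa\ge1$ to recover the stated probability; the paper thresholds at $L\,|\Xi|^{-\frac12\k\eta}$, which yields the stated probability verbatim but the set-size bound $|\fS_f(\k)|\ge L\big(1-|\Xi|^{-\frac12\k\eta}\big)$, so it needs $\fa\ge1$ to recover the stated set-size bound. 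Indeed, starting from $\E[|\fS_f(\k)^\c|]\le L\fa|\Xi|^{-\k\eta}$, no choice of threshold in the second Markov step yields both stated bounds simultaneously when $\fa<1$. Be careful, though, with your parenthetical justification: taking $C\ge1$ does \emph{not} force $\fa\ge1$, since by \eqref{eq:frak_a} the quantity $\fa$ also carries the factor $\vs^{-\eta}\left(\min_{\xi\in\Z_L}\cC(\Xi,\xi)\right)^{-\frac12\eta}$, which tends to $0$ precisely in the regime where the lemma is applied (there $\min_{\xi\in\Z_L}\cC(\Xi,\xi)\ge s^\tau\to\infty$). This shared looseness is immaterial downstream --- the proof of Theorem \ref{thm:sparse-symm-G} only uses that the product $\fa|\Xi|^{-\frac12\k\eta}$ is small, and either your conclusion or the paper's serves equally well --- but to prove the lemma exactly as stated one should either assume $\fa\ge1$ or restate it with $\fa$ replaced by $\max(\fa,1)$, or with $\sqrt{\fa}$ in both places (obtained by thresholding at $L\sqrt{\fa}\,|\Xi|^{-\frac12\k\eta}$).
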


\begin{proof}
We will approach this result by upper bounding the size of the set $\fS(\k)^\c$.
Observe that, $\xi \in \fS(\k)^\c$ implies that $|\hat{f}(\xi)|\le |\Xi|^{-\k}$; equivalently,
$|\hat{f}(\xi)|^{-\eta}\ge |\Xi|^{\k \eta}$.

This implies that,
\begin{align*}
|\Xi|^{\k \eta} |\fS(\k)^\c| = &\sum_{\xi \in \fS(\k)^\c} |\Xi|^{\k \eta} \\
\le &\sum_{\xi \in \fS(\k)^\c} |\hf(\xi)|^{-\eta} \\
\le &\sum_{\xi \in \Z_L}  |\hf(\xi)|^{-\eta}. \\
\end{align*}
Therefore, we may proceed as
\begin{equation}
|\Xi|^{\k \eta} \E[|\fS(\k)^\c|]
\le \E[\sum_{\xi \in \Z_L}  |\hf(\xi)|^{-\eta}]
\le \sum_{\xi \in \Z_L} \E[ |\hf(\xi)|^{-\eta}]\le L\fa(\Xi,\eta,\vs),
\end{equation}
where, in the last step, we make use of the definition \eqref{eq:frak_a} and of Lemma \ref{lem:neg-moment}; in particular choosing $C$ to be as in that Lemma.

We may restate this as
\begin{equation} \label{eq:exp-bound-goodset}
\E[|\fS(\k)^\c|] \le  L \fa(\Xi,\eta,\vs)|\Xi|^{-\k \eta}.
\end{equation}

We may then proceed via Markov's inequality as
\begin{align*}
\P[|\fS(\k)|\le & L(1-|\Xi|^{- \frac{1}{2} \k \eta})]  \\ = &\P[|\fS(\k)^\c|\ge L |\Xi|^{- \frac{1}{2} \k \eta}] \\ \le &\E[|\fS(\k)^\c|]/L|\Xi|^{- \frac{1}{2} \k \eta} \\
 \le &L^{-1} |\Xi|^{ \frac{1}{2} \k \eta} \cdot L \fa(\Xi,\eta,\vs) |\Xi|^{-\k \eta} \quad \text{[using \eqref{eq:exp-bound-goodset}]} \\
 = &\fa(\Xi,\eta,\vs)|\Xi|^{-\frac{1}{2}\k \eta},
\end{align*}
as desired.
\end{proof}


For $\Xi \subset \Z_L$ and $a \in \Z_L$, we recall \eqref{eq:def-cosine}:
\[
\cC(\Xi,a)=\mathbbm{1}_{\{0 \in \Xi\}} + 2 \sum_{k \in \Xi \setminus \{0\}} \cos^2(2\pi a k/L),
\]
where $\mathbbm{1}_A$
denotes the indicator function of the event $A$. 

\begin{lem} \label{lem:neg-moment}
Consider two subsets $\Xi, A \subset \Z_L$. Let $f \sim N_\Xi(\mathbf{0},\vs^2\mathbf{I})$. Then there is a positive number $C$ such that for any $0<\eta<1$, we have \[\E[|\hat{f}(\xi)|^{-\eta}] \le C(1-\eta)^{-1}\vs^{-\eta}\left(\min_{\xi \in A}\cC(\Xi,\xi)\right)^{-\frac{1}{2}\eta}\] for all $\xi \in A$.
\end{lem}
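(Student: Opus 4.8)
The plan is to reduce the negative-moment estimate for the complex random variable $\hf(\xi)$ to the same estimate for a single real Gaussian — its real part — and then to compare that Gaussian's variance against the cosine sum $\cC(\Xi,\xi)$. Fix $\xi\in A$ and expand
$$\hf(\xi)=\sum_{k\in\Xi}f(k)\exp\!\left(-\frac{2\pi i\xi k}{L}\right),$$
so that $\Re\hf(\xi)=\sum_{k\in\Xi}f(k)\cos(2\pi\xi k/L)$. Since the coordinates $\{f(k)\}_{k\in\Xi}$ are i.i.d.\ $N(0,\vs^2)$, this real part is a centred univariate Gaussian with variance $v_\xi=\vs^2\sum_{k\in\Xi}\cos^2(2\pi\xi k/L)$. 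The elementary inequality $|\hf(\xi)|\ge|\Re\hf(\xi)|$, together with the monotonicity of $t\mapsto t^{-\eta}$, gives $\E[|\hf(\xi)|^{-\eta}]\le\E[|\Re\hf(\xi)|^{-\eta}]$, which is the heart of the reduction: crucially it does not require knowing the joint law of the real and imaginary parts.

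Next I would compute the negative moment of a centred Gaussian exactly. For $X\sim N(0,v)$ with $v>0$ and $0<\eta<1$, the substitutions $x=\sqrt v\,u$ and then $t=u^2/2$ give $\E[|X|^{-\eta}]=v^{-\eta/2}\,2^{-\eta/2}\pi^{-1/2}\,\Gamma\!\left(\tfrac{1-\eta}{2}\right)$. The quantity $(1-\eta)\,\Gamma\!\left(\tfrac{1-\eta}{2}\right)$ is bounded on $(0,1)$, tending to $2$ as $\eta\to1^-$ (from the simple pole of $\Gamma$ at $0$) and to $\sqrt\pi$ as $\eta\to0^+$, so there is a universal constant $C$ with $\E[|X|^{-\eta}]\le C(1-\eta)^{-1}v^{-\eta/2}$ uniformly in $\eta$. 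Taking $v=v_\xi$ yields $\E[|\hf(\xi)|^{-\eta}]\le C(1-\eta)^{-1}\vs^{-\eta}\big(\sum_{k\in\Xi}\cos^2(2\pi\xi k/L)\big)^{-\eta/2}$.

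Finally I would match the cosine sum to $\cC$. A one-line comparison shows $2\sum_{k\in\Xi}\cos^2(2\pi\xi k/L)-\cC(\Xi,\xi)=\mathbbm{1}_{\{0\in\Xi\}}\ge0$, whence $\sum_{k\in\Xi}\cos^2(2\pi\xi k/L)\ge\tfrac12\cC(\Xi,\xi)\ge\tfrac12\min_{\xi'\in A}\cC(\Xi,\xi')$, the last step using $\xi\in A$. Substituting this into the previous display and absorbing $2^{\eta/2}\le\sqrt2$ into the constant produces exactly the claimed inequality. The only point needing care is the degenerate case $v_\xi=0$: then $\cos(2\pi\xi k/L)=0$ for every $k\in\Xi$, which forces $0\notin\Xi$ and hence $\cC(\Xi,\xi)=0$, so that $\min_{\xi'\in A}\cC(\Xi,\xi')=0$ and the right-hand side of the claim is $+\infty$, making the bound vacuously true. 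I expect the main obstacle to be not conceptual but rather the uniform control of the Gaussian negative-moment constant as $\eta\to1$ — precisely the $(1-\eta)^{-1}$ blow-up coming from the pole of $\Gamma$ at the origin — together with the clean bookkeeping that aligns $\sum\cos^2$ with the normalization built into the definition of $\cC$.
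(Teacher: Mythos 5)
Your proof is correct, and it differs from the paper's own argument in one meaningful respect. The paper proves the lemma by first establishing (Lemma~\ref{lem:var}) that $\hf(\xi)$ is \emph{exactly} a real Gaussian $N(0,\vs^2\cC(\Xi,\xi))$, and then scaling the negative moment of a standard one-dimensional Gaussian; but the proof of Lemma~\ref{lem:var} pairs $e^{2\pi i \xi k/L}$ with $e^{-2\pi i \xi k/L}$ using the symmetry $f(k)=f(-k)$, i.e.\ it tacitly reads $N_\Xi(\mathbf{0},\vs^2\mathbf{I})$ as the symmetric distribution $N_\Xi^{\symm}$, which is how the lemma is eventually applied in Theorem~\ref{thm:sparse-symm-G}. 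You never identify the law of the (in general complex) variable $\hf(\xi)$: you pass to $\Re\hf(\xi)$, which under the literal i.i.d.\ reading of Definition~\ref{def:Normal-subset} is a centred Gaussian with variance $\vs^2\sum_{k\in\Xi}\cos^2(2\pi\xi k/L)$, use $|\hf(\xi)|\ge|\Re\hf(\xi)|$ and monotonicity, compute the Gaussian negative moment exactly via the Gamma function, and absorb the harmless factor coming from $\sum_{k\in\Xi}\cos^2(2\pi\xi k/L)\ge\tfrac12\cC(\Xi,\xi)$ into the constant. What your route buys: it proves the statement as literally written (i.i.d.\ coordinates on $\Xi$, no symmetry assumed), a case in which $\hf(\xi)$ has a nontrivial imaginary part and the distributional identity of Lemma~\ref{lem:var} does not hold as stated; it also makes the constant and the $(1-\eta)^{-1}$ blow-up fully explicit through $(1-\eta)\Gamma\bigl(\tfrac{1-\eta}{2}\bigr)$ being bounded on $(0,1)$, and your disposal of the degenerate case $v_\xi=0$ (forcing $\cC(\Xi,\xi)=0$, hence a vacuous bound) is correct. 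What the paper's route buys: in the symmetric setting where the lemma is actually used, the variance is exactly $\vs^2\cC(\Xi,\xi)$, so no factor of $2$ is lost and the proof is a short consequence of Lemma~\ref{lem:var}. Both arguments are sound for their respective readings of $N_\Xi$, and yours has the advantage of covering both.
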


\begin{proof}
We invoke Lemma \ref{lem:var} in order to conclude that, for any $\xi \in \Z_L$, we have $\hf(\xi) \sim N(0,\vs^2\cC(\Xi,\xi))$. In other words, $\hf(\xi)$ is a 1D Gaussian random variable with variance $\cC(\Xi,\xi)$. Now, it follows from the  1D standard Gaussian density formula that for any $\eta<1$, a 1D standard Gaussian $Z$ has a finite negative moment $\E[|X|^{-\eta}]$  by $C (1-\eta)^{-1}$ for some positive number $C$. It follows that, if $Y \sim N(0,a^2)$, then $\E[|Y|^{-\eta}]=a^{-\eta}\E[|X|^{-\eta}] \le C (1-\eta)^{-1}a^{-\eta}$. This completes the proof.
\end{proof}

We can then state
\begin{lem} \label{lem:var}
Let $f \sim N_{\Xi}(\mathbf{0},\vs^2\mathbf{I})$. Then, for any $\xi \in \Z_L$, we have $\hf(\xi) \sim N(0,\vs^2\cC(\Xi,\xi))$.
\end{lem}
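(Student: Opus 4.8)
The plan is to exploit the symmetry that is built into the generative model $N_\Xi(\mathbf{0},\vs^2\mathbf{I})$: the support $\Xi$ is symmetric and the realized signal satisfies $f(k)=f(-k)$, with one independent $N(0,\vs^2)$ value attached to each symmetric orbit $\{k,-k\}$ (and to the fixed point $0$ when $0\in\Xi$). First I would record the elementary but crucial consequence that, since $f$ is real and even, its discrete Fourier transform collapses to a cosine transform,
\[
\hf(\xi)=\sum_{k\in\Z_L}f(k)\exp\!\left(-\tfrac{2\pi i\,\xi k}{L}\right)=\sum_{k\in\Z_L}f(k)\cos\!\left(\tfrac{2\pi \xi k}{L}\right),
\]
the imaginary parts cancelling in conjugate pairs. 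In particular $\hf(\xi)$ is real-valued, which is what makes the claim that $\hf(\xi)\sim N(0,\vs^2\cC(\Xi,\xi))$ a statement about an honest one-dimensional real Gaussian.

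Next I would rewrite $\hf(\xi)$ as an explicit linear functional of the independent Gaussian generators. Grouping the sum over $\Z_L$ by symmetric orbits, the fixed point $0$ contributes $f(0)$ with coefficient $\cos(0)=1$, while each orbit $\{k,-k\}$ with $k\neq 0$ contributes $f(k)\cos(2\pi\xi k/L)+f(-k)\cos(2\pi\xi(-k)/L)=2f(k)\cos(2\pi\xi k/L)$, using $f(k)=f(-k)$ together with the evenness of cosine. Thus $\hf(\xi)$ is a finite linear combination of independent centered Gaussians, hence itself a centered Gaussian; only its variance remains to be identified.

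Finally I would compute that variance by summing squared coefficients against $\vs^2$: the fixed point gives $\vs^2\,\mathbbm{1}_{\{0\in\Xi\}}$, and each orbit $\{k,-k\}$ with $k>0$ gives $\vs^2\cdot\big(2\cos(2\pi\xi k/L)\big)^2=\vs^2\cdot 4\cos^2(2\pi\xi k/L)$. Re-expressing the orbit sum as one-half of the sum over $\Xi\setminus\{0\}$ (again by evenness of $\cos^2$) turns $4\sum_{k\in\Xi,\,k>0}\cos^2(\cdot)$ into $2\sum_{k\in\Xi\setminus\{0\}}\cos^2(\cdot)$, so that $\var(\hf(\xi))=\vs^2\big(\mathbbm{1}_{\{0\in\Xi\}}+2\sum_{k\in\Xi\setminus\{0\}}\cos^2(2\pi\xi k/L)\big)=\vs^2\cC(\Xi,\xi)$, which matches the definition of $\cC(\Xi,\xi)$ exactly and completes the proof.

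I expect the only genuine subtlety to be bookkeeping the symmetrization correctly, namely recognizing that $f(k)$ and $f(-k)$ are the \emph{same} random variable rather than two independent ones; it is precisely this identification that produces the factor of two in the coefficients (hence the factor of four in the variance) reconciling the naive per-coefficient variance with the factor $2$ appearing in $\cC$. A minor technical point to dispatch is the self-paired Nyquist index arising when $L$ is even, where $k\equiv-k\pmod L$ for $k=L/2$: such an index carries a single generator rather than a genuine two-element orbit, and should be folded into the fixed-point bookkeeping alongside $0$; in the sparse regimes of interest (and in particular for odd $L$) this boundary case causes no difficulty.
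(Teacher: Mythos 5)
Your proof is correct and follows essentially the same route as the paper's: group the Fourier sum over the symmetric support into orbits $\{k,-k\}$ using $f(k)=f(-k)$, observe that $\hf(\xi)$ is then a real linear combination $f(0)\mathbbm{1}_{\{0\in\Xi\}}+2\sum_{k>0,\,k\in\Xi}f(k)\cos(2\pi\xi k/L)$ of independent centered Gaussians, and sum squared coefficients to identify the variance as $\vs^2\cC(\Xi,\xi)$. Your added care in reading $N_\Xi$ as the symmetric model $N^{\symm}_\Xi$ (which is what both the paper's proof and its application to symmetric signals implicitly require) and in flagging the even-$L$ Nyquist index (which cannot occur in the paper's symmetric generative model, since $A_+\subset\{0,\dots,\lfloor(L-1)/2\rfloor\}$) slightly exceeds the paper's own write-up, but the argument is the same.
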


\begin{proof}
For any $\xi \in \Z_L$, we can write \[\hf(\xi)=\sum_{\k \in \Xi } f(k)\exp(2\pi i \xi k/L);\] using the fact that $\Xi$ is symmetric about the origin and denoting $\Xi_+=\Xi \cap \{1, \ldots,L/2\}$, this may be rewritten as
\begin{equation} \label{eq:fourier-exp}
\hf(\xi)=f(0) \mathbbm{1}_{\{0 \in \Xi\}}  + 2\sum_{\k \in \Xi_+}   f(k) \cos(2\pi \xi k/L).
\end{equation}
Since $\{f(k)\}_{k \in \Xi}$ is a collection of i.i.d. $N(0,\vs^2)$ random variables (and $0$ for $k \notin \Xi$), we deduce that $\hf(\xi)$ is Gaussian with mean 0 and variance $\vs^2\mathbbm{1}_{\{0 \in \Xi\}} + 4 \sum_{k \in \Xi_+} \vs^2 \cos^2(2\pi \xi k/L)=\vs^2 \cC(\Xi,\xi)$, as desired.
\end{proof}

\subsection{Genericity of support for symmetric Bernoulli Gaussian and $N_{[-s,s]}^{\mathrm{symm}}(0,\zeta^2 I)$ distributions} \label{sec:mod-sparsity_gensupp}
In this section, we demonstrate that two major classes of distributions in the regime of moderate sparsity -- namely, the sparse symmetric Bernoulli-Gaussian distribution and the $N_{[-s,s]}^{\mathrm{symm}}(0,\zeta^2I)$ distribution -- exhibit cosine-genericity of support. 

To begin with, we recall the definitions of support sets of signals being typically $s$-sparse (Definition \ref{def:typical-sparse}) and $\fb$-cosine generic (Definition \ref{def:cosgen}).  We would also need to make use of Bernstein's inequality, which we state below.
 
\begin{lem}[Bernstein's Inequality]\cite{boucheron2013concentration} \label{lem:Hoeffding}
Let $X_1,\ldots,X_n$ be mean zero random variables, and let $|X_i| \le M$ for all $1\le i \le n$. Then, for any $t>0$, we have
\[ \P\left( \sum_{i=1}^n X_i \ge t  \right) \le \exp\left( - \frac{\frac{1}{2}t^2}{\sum_{i=1}^n \E[X_i^2] + \frac{1}{3}Mt} \right). \]
\end{lem}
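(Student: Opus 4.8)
The plan is to prove this by the classical Cram\'er--Chernoff (exponential moment) method, treating the $X_i$ as independent, as is implicit in the standard Bernstein setup. First I would apply the exponential Markov inequality: for any $\lambda > 0$,
\[ \P\left( \sum_{i=1}^n X_i \ge t \right) \le e^{-\lambda t}\, \E\left[ \exp\left( \lambda \sum_{i=1}^n X_i \right)\right] = e^{-\lambda t} \prod_{i=1}^n \E\!\left[e^{\lambda X_i}\right], \]
where the factorization of the moment generating function uses independence. The problem then reduces to controlling each factor $\E[e^{\lambda X_i}]$ and optimizing over $\lambda$.

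The key step is the single-variable MGF bound, which must exploit both the mean-zero hypothesis and the boundedness $|X_i| \le M$. Expanding the exponential and using $\E[X_i] = 0$ together with the moment comparison $\E[X_i^k] \le M^{k-2}\,\E[X_i^2]$ for $k \ge 2$, I would obtain
\[ \E\!\left[e^{\lambda X_i}\right] \le 1 + \frac{\E[X_i^2]}{M^2}\left(e^{\lambda M} - 1 - \lambda M\right) \le \exp\left( \frac{\E[X_i^2]}{M^2}\left(e^{\lambda M} - 1 - \lambda M\right)\right), \]
applying $1 + x \le e^x$ in the final step. Writing $v := \sum_{i=1}^n \E[X_i^2]$ and taking the product gives $\prod_i \E[e^{\lambda X_i}] \le \exp\!\big( \tfrac{v}{M^2}(e^{\lambda M} - 1 - \lambda M)\big)$.

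Next I would invoke the elementary inequality $e^u - 1 - u \le \tfrac{u^2/2}{1 - u/3}$, valid for $0 < u < 3$, which follows from the bound $k! \ge 2\cdot 3^{k-2}$ applied termwise to the series for $e^u - 1 - u$. Taking $u = \lambda M$ with $0 < \lambda < 3/M$, this turns the previous display into
\[ \P\left( \sum_{i=1}^n X_i \ge t \right) \le \exp\left( -\lambda t + \frac{\lambda^2 v / 2}{\,1 - \lambda M/3\,}\right). \]
Finally, I would optimize the exponent by substituting the value $\lambda = t/(v + Mt/3)$; a short computation shows $1 - \lambda M/3 = v/(v + Mt/3)$, so the fractional term collapses to $\lambda t/2$ and the exponent reduces to $-\lambda t/2 = -\tfrac{1}{2}\,t^2/(v + Mt/3)$, which is exactly the claimed bound.

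The only mildly delicate point is the MGF estimate in the second paragraph --- specifically the moment-comparison inequality and the termwise bound on the exponential series --- but both are routine; the remaining work is the standard Chernoff optimization. I would also note explicitly at the outset that independence is what licenses the product factorization, since the stated hypotheses as written only list the mean-zero and boundedness conditions.
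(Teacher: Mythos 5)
Your proof is correct and complete: the Chernoff factorization, the MGF bound via the moment comparison $\E[X_i^k] \le M^{k-2}\E[X_i^2]$, the termwise estimate $e^u - 1 - u \le \tfrac{u^2/2}{1-u/3}$ using $k! \ge 2\cdot 3^{k-2}$, and the optimization at $\lambda = t/(v + Mt/3)$ all check out and collapse to exactly the stated exponent. The paper itself does not prove this lemma --- it cites it from the reference \cite{boucheron2013concentration} --- and your argument is precisely the standard Cram\'er--Chernoff proof given there, so there is nothing to contrast. One point in your favor worth emphasizing: you are right that independence of the $X_i$ is not listed among the hypotheses as the lemma is stated, yet it is indispensable (the product factorization of the MGF fails without it, and the inequality itself is false for dependent variables). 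This is an omission in the paper's statement rather than in your proof; in the paper's applications (the Bernoulli sampling arguments in Lemmas \ref{lem:cosgen-symBer} and \ref{lem:cosgen-symdet}) the summands are indeed independent, so the usage is sound.
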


 We first take up the support properties of the symmetric Bernoulli-Gaussian distribution.
\begin{lem} \label{lem:cosgen-symBer}
For  $L$ large enough and $\log^9 L \le s \le L/\log^5 L$,  the sparse symmetric Bernoulli distribution with  mean 0, variance $\zeta^2$ and sparsity parameter $s$ is typically $s$-sparse with sparsity constants $(1/2 , 2)$, and is $s/32$-cosine generic.
\end{lem}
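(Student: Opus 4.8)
The plan is to note that both assertions concern only the random support $A=A_+\cup(-A_+)$, which is generated independently of the Gaussian amplitudes; so throughout I work with the product measure under which each element of $\Z_L^+$ lies in $A_+$ independently with probability $p=s/L$. Write $N=|\Z_L^+|=\lfloor(L-1)/2\rfloor+1\asymp L/2$, and observe that $A_+\cap(-A_+)\subseteq\{0\}$, so that $|A|=2|A_+|-\mathbbm{1}_{\{0\in A_+\}}$.

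For the typical-sparsity claim, $|A_+|$ is a sum of $N$ independent Bernoulli$(p)$ variables with mean $Np\asymp s/2$, so $\E|A|\asymp s$. Applying Bernstein's inequality (Lemma \ref{lem:Hoeffding}) to the centered summands of $|A_+|$, for both tails, yields $\P\big(|A|\notin[s/2,2s]\big)\le 2\exp(-cs)$ for an absolute $c>0$; since $s\ge\log^9 L$ this is $o(1)$ (indeed superpolynomially small in $L$), which is typical $s$-sparsity with constants $(1/2,2)$ in the sense of Definition \ref{def:typical-sparse}.

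For cosine genericity, I use the symmetry $A=A_+\cup(-A_+)$ and the evenness of $\cos^2$ in \eqref{eq:def-cosine} to rewrite, for each $\xi\in\Z_L$,
$$\cC(A,\xi)=\mathbbm{1}_{\{0\in A_+\}}+4\sum_{k\in A_+\setminus\{0\}}\cos^2(2\pi\xi k/L)\ \ge\ 4\,T(\xi),\qquad T(\xi):=\sum_{k\in A_+\setminus\{0\}}\cos^2(2\pi\xi k/L).$$
Here $T(\xi)$ is a sum of independent $[0,1]$-valued terms with mean $p\,S(\xi)$, where $S(\xi):=\sum_{k\in\Z_L^+\setminus\{0\}}\cos^2(2\pi\xi k/L)$ is deterministic. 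The crux is the uniform deterministic lower bound $S(\xi)\ge L/4-O(1)$ for every $\xi$. This follows from the exact identity $\sum_{k=0}^{N-1}\cos^2(\beta k)=\tfrac N2+\tfrac{\sin(\beta N)\cos(\beta(N-1))}{2\sin\beta}$ with $\beta=2\pi\xi/L$: because the summation length $N$ is essentially half the period, $\beta N\approx\pi\xi$ forces $\sin(\beta N)\approx 0$, so the correction term is $O(1)$ and $S(\xi)=N/2+O(1)\ge L/8$; the degenerate frequencies $\xi\in\{0,L/2\}$ (where $\sin\beta=0$ and the identity does not apply) are checked by hand and give $S(\xi)=N-1$, which is even larger. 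Consequently $\E[4T(\xi)]=4p\,S(\xi)\ge s/2$ for $L$ large.

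Finally, I fix $\xi$ and apply Bernstein's inequality (Lemma \ref{lem:Hoeffding}) to the lower tail of $T(\xi)$: since each summand lies in $[0,1]$, the variance is at most the mean, and one obtains $\P\big(4T(\xi)<\tfrac12\E[4T(\xi)]\big)\le\exp(-c's)$ for an absolute $c'>0$. A union bound over the $L$ frequencies then gives $\P\big(\min_{\xi\in\Z_L}\cC(A,\xi)<s/32\big)\le L\exp(-c's)$, which tends to $0$ precisely because $s\ge\log^9 L\gg\log L$; this establishes $s/32$-cosine genericity in the sense of Definition \ref{def:cosgen}, with considerable slack in the constant. The main obstacle is the uniform deterministic bound on $S(\xi)$, together with the isolated degenerate frequencies $\xi\in\{0,L/2\}$; once these are handled via the exact geometric summation, the probabilistic step is routine, and the regime $s\ge\log^9 L$ is exactly what makes the union bound succeed.
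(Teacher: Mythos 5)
Your proof is correct, and its probabilistic skeleton (Bernstein for $|A|$, then Bernstein plus a union bound over the $L$ frequencies for $\cC$, with the union bound succeeding exactly because $s \ge \log^9 L \gg \log L$) coincides with the paper's. Where you genuinely depart from the paper is in how the frequency-wise lower bound is obtained. The paper computes $\E[\cC(\Xi,\xi)]$ via a geometric series, bounds the oscillatory term by $4/(L|1-\mu^2|)$ with $\mu = e^{2\pi i \xi/L}$, and is thereby forced into a two-case analysis: generic frequencies give $\E[\cC(\Xi,\xi)] \ge s/2$, while the near-degenerate ones ($|\xi - \delta L/2| < 4$, $\delta \in \{0,\pm 1\}$) are rescued by a separate argument restricting the sum to $k \le L/32$, where $\cos^2 \ge 1/2$, yielding $\ge s/16$; concentration with deviation $s/32$ then gives $s/16 - s/32 = s/32$. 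You instead prove a single deterministic bound $S(\xi) \ge N/2 - O(1)$ valid for every non-degenerate $\xi$, via the exact closed form of $\sum_k \cos^2(\beta k)$ over half a period, handling $\xi \in \{0,\pm L/2\}$ by hand. This is sharper: the exact computation reveals that the paper's ``bad frequency'' regime is an artifact of the lossy bound $|1-\mu^{L-\alpha}| \le 2$ (since $\mu^L=1$ one has $\mu^{L-\alpha}=\mu^{-\alpha}$, and the correction to the expectation is in fact $O(1/L)$ uniformly in $\xi$), so your route eliminates the case analysis entirely and delivers $\cC(A,\xi) \gtrsim s/2$ rather than $s/32$ --- well above what the lemma demands.

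One step needs more care than you give it. You assert that $\sin(\beta N) \approx 0$ ``forces the correction term to be $O(1)$,'' but that inference is invalid on its face: the denominator $\sin\beta = \sin(2\pi\xi/L)$ also vanishes as $\xi$ approaches $0$ or $\pm L/2$, which is precisely the dangerous regime. What saves you --- and it does follow from the very identity you invoke --- is an exact cancellation, not merely a small numerator: for $L$ even, $N = L/2$ gives $\sin(\beta N) = \sin(\pi\xi) = 0$, so the correction vanishes identically; for $L$ odd, $N = (L+1)/2$ gives $\sin(\beta N) = \pm\sin(\pi\xi/L)$ and $\cos(\beta(N-1)) = \pm\cos(\pi\xi/L)$, so the correction equals $\sin(\pi\xi/L)\cos(\pi\xi/L)\,/\,\bigl(2\sin(2\pi\xi/L)\bigr) = 1/4$ exactly, for every non-degenerate $\xi$. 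Spell this computation out: as written, the ``$\approx 0$ hence $O(1)$'' sentence is the one place a referee could object, and it happens to be the crux of your argument.
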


\begin{proof}
We first show that the sparse symmetric Bernoulli distribution is typically $s$-sparse, which amounts to showing that $|\Xi|$ is of order $s$ with high probability. But we observe that $|\Xi|=Y_0+ 2\sum_{i=1}^{\lfloor (L-1)/2 \rfloor}Y_i$, where each $Y_i$ is a Bernoulli($s/L$) random variable.
We immediately conclude that $\E[|\Xi|]=s+1$. Applying Bernstein's inequality (c.f. Lemma \ref{lem:Hoeffding}) to the centred random variables $X_i=2(Y_i-\E[Y_i])$ (for $i\ge 1$) and $X_0=Y_0-\E[Y_0]$ with $t=\frac{1}{2}s$ and $M=2$, we obtain
\[ \P\left( \big| |\Xi|  - \E[|\Xi|] \big| \ge t \right) \le \exp\left( - \frac{\frac{1}{8}s^2}{ 4(\frac{L}{2}+1)\frac{s}{L}(1- \frac{s}{L}) + \frac{1}{3}s} \right)=\exp(-cs(1+o_L(1))) \]
for some positive number $c$. Since $\E[\Xi]=s+1$, we deduce that $\frac{1}{2}s \le |\Xi| \le 2s$ with probability $1-o_L(1)$, implying that the sparse symmetric Bernoulli distribution is typically $s$-sparse, with sparsity constants $(1/2,2)$.

To demonstrate that the  symmetric Bernoulli distribution is cosine generic with the parameters as claimed in the statement of this lemma, we first compute, for a fixed $\xi \in \Z_L$, the expectation $\E[\cC(\Xi,\xi)]$. To this end, we may write
\begin{equation} \label{eq:var-expression}
\cC(\Xi,\xi)= Y_0  + 4 \sum_{k=1}^{{\lfloor (L-1)/2 \rfloor}} \cos^2(2\pi \xi k/L)Y_i,
\end{equation}
where the random variables $Y_i$ are defined as above.
Then
\begin{equation} \label{eq:exp-var-primary}
\E[\cC(\Xi,\xi)]=\frac{s}{L}+\frac{4s}{L}\sum_{k=1}^{{\lfloor (L-1)/2 \rfloor}} \cos^2(2\pi \xi k/L).
\end{equation}
Setting $\mu=\exp(2\pi i \xi/L)$, this reduces to
\begin{align*}
\E[\cC(\Xi,\xi)]= &\frac{s}{L} +  \frac{s}{L}\sum_{k=1}^{{\lfloor (L-1)/2 \rfloor}} |\mu^k+\mu^{-k}|^2= \frac{s}{L} + \frac{s}{L}  \sum_{k=1}^{{\lfloor (L-1)/2 \rfloor}} (2+2\Re(\mu^{2k})) \\
                   =& s(1+o_L(1)) + s \cdot \frac{1}{L}\Re\left( \sum_{k=1}^{{\lfloor (L-1)/2 \rfloor}} \mu^{2k} \right)  \\ = &s \cdot \left[1 + \frac{2}{L}\Re\left( \mu^2 \cdot \frac{1-\mu^{L-\alpha}}{1-\mu^2}\right) + o_L(1) \right], \numberthis \label{eq:exp-var}
                         \end{align*}
where $\alpha=1$ or $2$, depending on whether $L$ is odd or even.

By considering the magnitude of the quantity $\left( \mu^2 \cdot \frac{1-\mu^{L-\alpha}}{1-\mu^2}\right)$, we deduce from \eqref{eq:exp-var} that for large $s,L$ the expectation
\begin{align*}
&\E[\cC(\Xi,\xi)] \ge s/2  \\
& \text{ unless } |1-\mu^2|<8/L \iff |2\xi/L - \delta| <8/L \text{ for } \delta=0,\pm 1 \\
& \qquad \qquad \qquad \qquad \qquad \iff |\xi - \delta \cdot \frac{L}{2}| < 4   \text{ for } \delta=0,\pm 1. \numberthis      \label{eq:badset}
\end{align*}

It remains to deal with the frequencies $\xi$ that satisfy \eqref{eq:badset}. We will demonstrate the details for the case $\delta=0$; the computations for $\delta=\pm 1$ are similar, and indeed can be reduced to the consideration of $\delta=0$ by making a change of variables $\hat{\xi}=\xi-\delta \cdot \frac{L}{2}$ and observing that $\cos^2(2\pi \hat{\xi} k/L)=\cos^2(2\pi \xi k/L)$.

Therefore, we reduce ourselves to considering the frequencies $\xi$ (in the context of  \eqref{eq:badset}) such that $|\xi| < 4$. We then invoke \eqref{eq:exp-var-primary} and lower bound
\begin{equation} \label{eq:exp-var-secondary}
\E[\cC(\Xi,\xi)] \ge \frac{4s}{L}\sum_{k=1}^{L/32} \cos^2(2\pi \xi k/L) \ge \frac{4s}{L}\sum_{k=1}^{L/32} \cos^2(2\pi /8) = \frac{4s}{L} \cdot \frac{L}{32} \cdot \frac{1}{2} = s/16.
\end{equation}

We combine our analyses of the two classes of frequencies, summarize it as:
\begin{equation} \label{eq:exp-var-final}
\E[\cC(\Xi,\xi)] \ge s/16 \quad \forall \xi \in \Z_L.
\end{equation}


We centre the $Y_i$-s in \eqref{eq:var-expression} by their expectations, and define the centered random variables $X_0=Y_0-\E[Y_0]$ and for $1\le i \le {\lfloor (L-1)/2 \rfloor}$, $X_i=4\cos^2(2\pi \xi k/L)(Y_i-\E[Y_i])$. Then we may write \[\cC(\Xi,\xi)-\E[\cC(\Xi,\xi)]= \sum_{i=0}^{{\lfloor (L-1)/2 \rfloor}}X_i. \]

Notice that, for any $i \ge 0$, $\var[X_i]\le 4s/L(1-s/L)$, so that \[\sum_{i=0}^{{\lfloor (L-1)/2 \rfloor}} \E[X_i^2] \le 2s(1-s/L)(1+o_L(1)).\]

Applying Bernstein's inequality (c.f. Lemma \ref{lem:Hoeffding}) with $t=s/32$ and $M=4$, we proceed as
 \begin{align*}
 &\P\left( \big| \cC(\Xi,\xi)  - \E[|\cC(\Xi,\xi)] \big| \ge s/32 \right)  \\
& \le  \exp\left( - \frac{\frac{1}{2}t^2}{\sum_{i=0}^{{\lfloor (L-1)/2 \rfloor}} \E[X_i^2] + \frac{1}{3}Mt} \right) \\
 \le &\exp\left( - \frac{\frac{1}{2048}s^2}{2s(1-s/L)(1+o_L(1)) + \frac{1}{24}s} \right). \\
\le &\exp(-s/10^4(1+o_L(1))).
 \end{align*}
By a union bound, we may further deduce that
\begin{equation} \label{eq:union-bound}
\P\left( \exists \xi \in \Z_L \text{ such that }  \big| \cC(\Xi,\xi)  - \E[\cC(\Xi,\xi)] \big| \ge s/32 \right) \le L\exp(-s/10^4(1+o_L(1))).
\end{equation}
The right hand side is $o_L(1)$ as soon as $s \gg 10^4 \log L $.

On the complement  of the event in \eqref{eq:union-bound}, that is, when $\{\big| \cC(\Xi,\xi)  - \E[\cC(\Xi,\xi)] \big| < s/32 \quad \forall \xi \in \Z_L\}$, we may deduce from \eqref{eq:badset} that for all $\xi \in \Z_L$
\begin{align*}
&\cC(\Xi,\xi) \\
\ge & \E[\cC(\Xi,\xi)] - \big| \cC(\Xi,\xi)  - \E[\cC(\Xi,\xi)] \big| \\
\ge & s/16 - s/32 \\
= & s/32.
\end{align*}

This shows that $\min_{\xi \in \Z_L} \cC(\Xi,\xi) \ge s/32$ with probability $1-o_L(1)$ (in the random subset $\Xi$), thereby establishing the claim that for  $s\ge c\log L$ and $L$ large enough, the random subset $\Xi$ is $s/32$-cosine generic.
\end{proof}

Finally, we end this section with a study of the support properties of the $N_{[-s,s]}^{\mathrm{symm}}(0,\zeta^2I)$ distribution.
\begin{lem} \label{lem:cosgen-symdet}
For $s,L$ large enough, the deterministic subset $\{[-s,s] \cap \Z_L\}$ is typically $s$-sparse and is $s/16$-cosine generic.
\end{lem}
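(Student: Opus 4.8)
The plan is to exploit the fact that here everything is deterministic, which makes the result strictly simpler than its Bernoulli--Gaussian counterpart Lemma \ref{lem:cosgen-symBer}: no concentration (Bernstein) step is needed. First I would record that the support is the fixed set $\Xi = [-s,s]\cap\Z_L = \{-s,\dots,s\}$, so that $|\Xi| = 2s+1$ exactly. Since $2s+1$ lies in $[s,3s]$, the set $\Xi$ is (trivially, with probability one) typically $s$-sparse in the sense of Definition \ref{def:typical-sparse}, with sparsity constants, say, $(1,3)$. The whole content therefore reduces to the deterministic lower bound $\min_{\xi\in\Z_L}\cC(\Xi,\xi)\ge s/16$, which is the exact analogue of the \emph{expectation} bound \eqref{eq:exp-var-final} appearing in the proof of Lemma \ref{lem:cosgen-symBer}, except that now it holds on the nose rather than merely in mean.

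Next I would expand $\cC$. Since $\Xi$ is symmetric and contains $0$, the definition of $\cC(\Xi,\cdot)$ recalled above together with $\cos^2 t = \tfrac12(1+\cos 2t)$ gives
\[ \cC(\Xi,\xi) = 1 + 4\sum_{k=1}^{s}\cos^2(2\pi\xi k/L) = 1 + 2s + 2\sum_{k=1}^{s}\cos(4\pi\xi k/L). \]
The oscillatory sum is a Dirichlet-kernel sum: with $\phi = 4\pi\xi/L$ one has $\sum_{k=1}^s\cos(k\phi) = -\tfrac12 + \tfrac{\sin((s+\frac12)\phi)}{2\sin(\phi/2)}$, so that $\sum_{k=1}^{s}\cos(4\pi\xi k/L)\ge -\tfrac12 - \tfrac{1}{2|\sin(2\pi\xi/L)|}$ and hence
\[ \cC(\Xi,\xi) \ge 2s - \frac{1}{|\sin(2\pi\xi/L)|}. \]
This immediately settles the \emph{generic} frequencies: whenever $|\sin(2\pi\xi/L)|\ge \tfrac{16}{31 s}$ one gets $\cC(\Xi,\xi)\ge 2s - \tfrac{31 s}{16} = s/16$, as required.

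The main (and essentially only) obstacle is the remaining band of frequencies with $|\sin(2\pi\xi/L)| < \tfrac{16}{31 s}$, i.e.\ those $\xi$ lying within $O(L/s)$ of $0$ or of $L/2$, where the Dirichlet bound is vacuous. For these I would abandon the oscillatory form and return to the manifestly nonnegative expression $\cC(\Xi,\xi) = 1 + 4\sum_{k=1}^s\cos^2(2\pi\xi k/L)$, exactly as in \eqref{eq:exp-var-secondary}. For $\xi$ near $0$, the phases $2\pi\xi k/L$ stay below $\pi/8$ for all $k\le L/(16|\xi|)$, and the constraint $|\xi| = O(L/s)$ guarantees that a constant fraction $c\,s$ of the indices $k\in\{1,\dots,s\}$ satisfy this, each contributing $\cos^2\ge\cos^2(\pi/8)$; this yields $\cC(\Xi,\xi)\ge 4 c\cos^2(\pi/8)\,s \gg s/16$. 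For $\xi$ near $L/2$, the change of variables $\xi\mapsto\xi - L/2$ used in Lemma \ref{lem:cosgen-symBer}, together with the identity $\cos^2(2\pi\xi k/L) = \cos^2(2\pi(\xi-L/2)k/L)$, reduces matters to the near-$0$ case. The only delicate point is to choose the separating threshold $\tfrac{16}{31 s}$ so that it is simultaneously compatible with the Dirichlet estimate of the generic case and with the small-phase term count of the boundary case; the constants are not tight, and any bookkeeping that preserves the value $s/16$ in the generic case suffices.
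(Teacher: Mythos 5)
Your proposal is correct and follows essentially the same route as the paper's own proof: the paper evaluates the same oscillatory sum in closed form via the geometric series $\sum_k \omega^{2k}$ with $\omega = e^{2\pi i \xi/L}$ (your Dirichlet-kernel bound is exactly the real part of that expression, with $|1-\omega^2| = 2|\sin(2\pi\xi/L)|$), splits frequencies into a generic set where this gives $\cC(\Xi,\xi) \gtrsim s$ and a degenerate band near $0$ and $\pm L/2$, handles the band by counting the terms $k$ with small phase and $\cos^2$ bounded below by a constant, and reduces $\xi$ near $L/2$ to $\xi$ near $0$ by the same sign-killing change of variables. Only the bookkeeping constants differ (the paper uses $|1-\omega^2|<8/s$, $|k|\le s/32$, $\cos^2 \ge 1/2$; you use $16/(31s)$, $k \le L/(16|\xi|)$, $\cos^2(\pi/8)$), and both work.
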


\begin{proof}
The (deterministic) subset $\Xi$ has size exactly $2s+1$, therefore $\Xi$ is trivially \textit{typically $s$-sparse}. It remains to show the cosine genericity of $\Xi$.

For any $\xi \in \Z_L$, we have
\begin{align*}
& \cC(\Xi,\xi) \\
= & 1 + 2 \sum_{k \in [-s,s] \setminus \{0\}} \cos^2(2\pi \xi k/L) \\
= & -1 + 2 \sum_{k=-s}^{s} \cos^2(2\pi \xi k/L) \numberthis \label{eq:cosgen-symdet-1} \\ 
= &  -3 +  \sum_{k=0}^{s} |\exp(2\pi i \xi k/L)+\exp(-2\pi i \xi k/L)|^2.
\end{align*}
Setting $\o=\exp(2\pi i \xi /L)$, we may proceed as
\begin{align*}
\cC(\Xi,\xi)= & -3 +  \sum_{k=0}^{s} |\o^k+\o^{-k}|^2=-3 +  \sum_{k=0}^{s} (2+2\Re(\o^{2k})) \\
                  =& 2s-1 + 2\Re\left( \sum_{k=0}^{s} \o^{2k} \right) = 2s-1 + 2\Re\left(  \frac{1-\o^{2s+2}}{1-\o^2}\right).
\end{align*}

The last equation implies that
\begin{equation} \label{eq:cosgen-symdet-2}
\cC(\Xi,\xi) \ge s(1+o(1))
\end{equation}
unless $\big|\Re\left( \frac{1-\o^{s+2}}{1-\o^2}\right)\big| > s/4$, which would in particular imply that
\begin{equation} \label{eq:badvar1}
\big|\frac{1-\o^{s+2}}{1-\o^2}\big| > s/4.
\end{equation}
Recalling the definition of $\o$, and observing that $|1-\o^{s+2}|\le 2$  we may deduce that \eqref{eq:badvar1} is true only if $|1-\o^2|<8/s$. Recalling that $\o=\exp(2\pi i \xi /L)$ , we deduce that   for large enough $s$, the inequality holds $|1-\o^2|<8/s$  only if $|2\xi/L - \delta \cdot \frac{L}{2}|\le 8/s (1+o_s(1))$, where $\delta=0, \pm 1$. As in the proof of Lemma \ref{lem:cosgen-symBer}, we focus on the case $\delta=0$, noting in passing that the cases $\delta = \pm 1$ are similar and are easily dealt with using a simple change of variables from $\xi$.

When $\delta=0$, we are considering frequencies $\xi$ such that $\xi/L < 4/s$.  This in particular implies that for all $|k| \le s/32$, we have $|2\pi \xi k/L| <\pi/4$, implying $\cos^2(2\pi \xi k/L) \ge 1/2$.

We now proceed to lower bound $\cC(\Xi,\xi)$ for $\xi \in \Z_L$. If $\xi \in \Z_L$ is such that $\o=\exp(2\pi i \xi /L)$ \textit{does not} satisfy \eqref{eq:badvar1}, then by \eqref{eq:cosgen-symdet-2} we conclude that $\cC(\Xi,\xi) \ge s(1+o(1))$. 

If $\xi \in \Z_L$ is such that $\o=\exp(2\pi i \xi /L)$  satisfies \eqref{eq:badvar1}, then we proceed as follows. 
Using \eqref{eq:cosgen-symdet-1}, we may lower bound $\cC(\Xi,\xi)$ as
\begin{align*}
& \cC(\Xi,\xi)  \\
= & -1 + 2 \sum_{k=-s}^{s} \cos^2(2\pi \xi k/L) \\
\ge & 1 + 2 \sum_{1 \le |k| \le s/32}  \cos^2(2\pi \xi k/L) \\
\ge & s/16. \numberthis \label{eq:cosgen-symdet-3}
\end{align*}

Combining \eqref{eq:cosgen-symdet-2} and \eqref{eq:cosgen-symdet-3}, we deduce that $\cC(\Xi,\xi) \ge s/16 \quad \forall \xi \in \Z_L$, thereby showing that $\Xi$ is $s/16$-cosine generic and  completing the proof of the lemma.
\end{proof}


\section{Results  on the curvature of $D_{KL}$}  \label{sec:curv_prop}
In this section, we provide the proofs of several propositions pertaining to the curvature of the KL divergence for the MRA model.
\subsection{Moment difference tensors and $D_{KL}$}
\begin{proof}[Proof of Proposition \ref{prop:CurveLB-normality}]
We discuss (i); the case of (ii) would be similar. We recall that the probability distribution $p_\t$ as well as  $D_{KL}(p_\t \| p_\phi)$ are invariant under the action of $\cG$, i.e., invariant under the transformations $\t \mapsto G \cdot \t$ for $G \in \cG$. As a result, for $\varrho(\t,\t_0)$ small enough (equivalently, $\|\t-\t_0\|_2$ small enough), we may assume without loss of generality that $\varrho(\t,\t_0)= \frac{1}{\sqrt{L}}\|\t-\t_0\|_2$ (c.f., \cite{BanRigWee17}; esp. the proof of Theorem 4 therein). 

The dimension of the Hessian of $D_{KL}(p_{\t_0}||p_\t)$ depends on the local dimension of the parameter space at the point $\t_0$, which is the same as $k=|\supp(\t_0)|$. The lower bound on $D_{KL}$ in (i) implies that 
\[  K_1(\sigma)^{1/2}\mathrm{Id}_k  \preceq I(\theta_0)   \iff  I(\theta_0)^{-1} \preceq K_1(\sigma)^{-1/2}\mathrm{Id}_k,\] 
where $\mathrm{Id}_k$ is the $k \times k$ identity matrix, and $\preceq$
denotes domination in the sense of non-negative definite matrices. Since $\sqrt{n}(\tlt_n - \theta_0) \to N(0,I(\theta_0)^{-1})$, setting $\mathcal{Z}_k \sim N(0,\mathrm{Id}_k)$, we may deduce that as $n \to \infty$ we have the distributional convergence
\begin{equation} \label{eq:asymptotics-1}
\sqrt{n}\|\tlt_n-\theta_0\|_2 = \sqrt{n\|\tlt_n-\theta_0\|_2^2} \to \|I(\theta_0)^{-1/2}\mathcal{Z}_k\|_2 .
\end{equation}
On the other hand, we have
\begin{equation} \label{eq:asymptotics-2}
 \|I(\theta_0)^{-1/2}\mathcal{Z}_k\|_2 = \sqrt{\langle \mathcal{Z}_k, I(\theta_0)^{-1} \mathcal{Z}_k \rangle} \le K_1(\sigma)^{-1/2} \|\mathcal{Z}_k\|_2.
\end{equation}
Thus, $\sqrt{n}\vr{\t,\t_0} = \sqrt{n} \frac{1}{\sqrt{L}} \cdot \|\tlt_n-\theta_0\|_2  =O_p( K_1(\sigma)^{-1/2}/\sqrt{L})$, as desired. 

We note in passing that $\|\mathcal{Z}_k\|_2$ is a $\sqrt{\chi^2(k)}$ distribution. 
\end{proof}

Recall that  for any $\t  = (\t_1,\ldots,\t_L) \in \R^L$, we denote $\bt=\frac{1}{L}\sum_{i=1}^L \t_i$.
This leads us to the fact that $\t^*:=\E_\cG[G\t]=\bt \cdot \ind$, where $\ind = (1,1,\ldots,1) \in \R^L$ is the all ones vector in $L$ dimensions. Finally, we denote by $\tlt$ the centred version of $\t$, that is, $\tlt=\t-\t^*=\t-\E_\cG[G\t]$. We observe that 
\begin{equation} \label{eq:centre_mean_zero}
\E_\cG[G\tlt]= \E_\cG[\t-\t^*]=\E_\cG[\t]-\E_\cG[\t^*] = \t^* - \t^* =0. 
\end{equation}

Notice further that, with the above notations, we may write
\begin{equation} \label{eq:delta1}
\Del_1(\t,\phi)=(\bt - \bp) \ind.
\end{equation}

Towards the proofs of Propositions \ref{prop:linear_LB} and \ref{prop:quadratic_UB}, we will now present a comparison between the second moment difference tensors for the centred and uncentred versions of two vectors $\t$ and $\phi$. To this end, we state the following Proposition.
\begin{prop} \label{prop:centred_vs_uncentred}
We have,
\[ \Del_2(\t,\phi) =  \Del_2(\tlt,\tlp) + (\bt^2 -\bp^2) \cdot \ind \otimes \ind.\]
\end{prop}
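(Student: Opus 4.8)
The plan is to expand the second moment tensor directly using the additive decomposition of each signal into its centred part and its mean. Recall that $\t^* = \E_\cG[G\t] = \bt \cdot \ind$, so that $\t = \tlt + \bt \ind$, and similarly $\phi = \tlp + \bp \ind$. The crucial elementary observation is that the all-ones vector is fixed by every cyclic shift, i.e. $G\ind = \ind$ for all $G \in \cG$. Consequently $G\t = G\tlt + \bt \ind$, and the entire argument reduces to expanding a tensor square of a sum.

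Concretely, I would first write
\begin{align*}
(G\t)^{\otimes 2} = (G\tlt + \bt\ind)^{\otimes 2} = (G\tlt)^{\otimes 2} + \bt\,(G\tlt)\otimes \ind + \bt\,\ind \otimes (G\tlt) + \bt^2\,\ind \otimes \ind,
\end{align*}
and then take the expectation $\E_\cG[\,\cdot\,]$ over the group. The two cross terms vanish because $\E_\cG[G\tlt] = 0$ by \eqref{eq:centre_mean_zero}, so the linear-in-$G\tlt$ contributions integrate to zero against the fixed vector $\ind$. This yields the clean identity $\E_\cG[(G\t)^{\otimes 2}] = \E_\cG[(G\tlt)^{\otimes 2}] + \bt^2\,\ind \otimes \ind$, and the analogous identity holds for $\phi$ with $\bt$ replaced by $\bp$ and $\tlt$ by $\tlp$.

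Subtracting the two identities then gives
\begin{align*}
\Del_2(\t,\phi) &= \E_\cG[(G\t)^{\otimes 2}] - \E_\cG[(G\phi)^{\otimes 2}] \\
&= \Big(\E_\cG[(G\tlt)^{\otimes 2}] + \bt^2\,\ind \otimes \ind\Big) - \Big(\E_\cG[(G\tlp)^{\otimes 2}] + \bp^2\,\ind \otimes \ind\Big) \\
&= \Del_2(\tlt,\tlp) + (\bt^2 - \bp^2)\,\ind \otimes \ind,
\end{align*}
which is the claimed formula.

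There is essentially no substantive obstacle here; the statement is a bookkeeping identity. The only point requiring any care is the justification that the mixed terms $\E_\cG[(G\tlt)\otimes \ind]$ and $\E_\cG[\ind \otimes (G\tlt)]$ vanish: this follows because $\ind$ is deterministic (unaffected by the averaging over $G$) and can be pulled out of the expectation, leaving $\E_\cG[G\tlt]\otimes \ind = 0$ by the centring property \eqref{eq:centre_mean_zero}. Verifying that $G\ind = \ind$, which is what makes the decomposition $G\t = G\tlt + \bt\ind$ valid, is immediate from the definition of the cyclic shift action.
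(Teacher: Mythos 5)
Your proof is correct and follows essentially the same route as the paper: both decompose $\t = \tlt + \t^*$ with $\t^* = \bt\ind$, use the $\cG$-invariance of the all-ones vector to expand the tensor square, kill the cross terms via \eqref{eq:centre_mean_zero}, and subtract the resulting identities for $\t$ and $\phi$. Nothing is missing.
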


\begin{proof}
We have,
\begin{align*}
& \E_\cG[(G \t)^{\otimes 2}] \\
= & \E_\cG[(G(\tlt + \t^*))^{\otimes 2}]\\
= & \E_\cG[(G\tlt + G\t^*))^{\otimes 2}]\\
= & \E_\cG[(G\tlt + \t^*))^{\otimes 2}]  \quad \text{[since $\t^*$ is $\cG$-invariant]} \\
= & \E_\cG[(G\tlt)^{\otimes 2}] +  \E_\cG[G\tlt \otimes \t^*]  + \E_\cG[\t^* \otimes G\tlt ]  + \t^* \otimes \t^* \\
= & \E_\cG[(G\tlt)^{\otimes 2}] +  \E_\cG[G\tlt] \otimes \t^*  + \t^* \otimes  \E_\cG[G\tlt ]  + \t^* \otimes \t^* \\
= & \E_\cG[(G\tlt)^{\otimes 2}] + \t^* \otimes \t^* \quad \text{[using \eqref{eq:centre_mean_zero}]} \\
= & \E_\cG[(G\tlt)^{\otimes 2}] + \bt^2 \ind \otimes \ind. \numberthis \label{eq:cent_vs_uncent_1}
\end{align*}

In view of \eqref{eq:cent_vs_uncent_1}, we may write
\begin{equation} \label{eq:cent_vs_uncent_2}
 \Del_2(\t,\phi) = \E_\cG[(G \t)^{\otimes 2}] - \E_\cG[(G \phi)^{\otimes 2}] =  \Del_2(\tlt,\tlp) + (\bt^2 -\bp^2) \cdot \ind \otimes \ind, 
\end{equation}
as desired.
\end{proof}

We now proceed to establish Proposition \ref{prop:linear_LB}.
\begin{proof}[Proof of Proposition \ref{prop:linear_LB}]
Observe that, $|\bt^2 -\bp^2|=|\bt + \bp| \cdot |\bt -\bp| \le (\|\t\|_\infty + \|\phi\|_\infty) \cdot |\bt -\bp|$.
This implies, in particular, that 
\begin{equation} \label{eq:cent_vs_uncent_3}
\|\Del_2(\tlt,\tlp)\|_F \ge \|\Del_2(\t,\phi)\|_F - (\|\t\|_\infty + \|\phi\|_\infty) \cdot |\bt -\bp|\cdot \|\ind \otimes \ind \|_F.
\end{equation}

Now, Theorem \ref{thm:BRW2} implies that 
\begin{align*}
& D_{KL}(\tlt|| \tlp) \\ 
\ge & C \cdot \|\Del_2(\tlt,\tlp)\|_F^2/\s^4 \\  
\ge & C \cdot  \left(\|\Del_2(\t,\phi)\|_F - (\|\t\|_\infty + \|\phi\|_\infty) \cdot |\bt -\bp|\cdot \|\ind \otimes \ind \|_F\right)^2/\s^4 \\
\ge & C \cdot  \left(\frac{3}{4}\|\Del_2(\t,\phi)\|_F^2 - 3 (\|\t\|_\infty + \|\phi\|_\infty)^2 \cdot |\bt -\bp|^2 \cdot \|\ind \otimes \ind \|_F^2 \right)/\s^4
\end{align*}
for a positive number $C$, where in the last step we use Proposition \ref{prop:real_numbers}.

Combining the above with Lemma \ref{lem:BRW2} we obtain
\begin{align*} 
& D_{KL}(\t || \phi) \\ 
\ge & \frac{1}{2}|\bt - \bp|^2\|\ind\|_2^2 \cdot \s^{-2}  + C  \left(\frac{3}{4}\|\Del_2(\t,\phi)\|_F^2 - 3 (\|\t\|_\infty + \|\phi\|_\infty)^2  |\bt -\bp|^2  \|\ind \otimes \ind \|_F^2 \right) \\
\ge &    \s^{-4} \frac{3C}{4}\|\Del_2(\t,\phi)\|_F^2 + |\bt - \bp|^2 \left(\frac{1}{2}\s^{-2} \|\ind\|_2^2 - 3C\s^{-4}(\|\t\|_\infty + \|\phi\|_\infty)^2  \|\ind \otimes \ind \|_F^2 \right). 
\numberthis \label{eq:cent_vs_uncent_4}
\end{align*}

We now make use of the fact that the signal class $\cT$ is bounded  (in the deterministic setting), and in the case of generative models, the random signal is bounded with high probability. 

We then consider the term 
\[\left(\frac{1}{2}\s^{-2} \|\ind\|_2^2 - 3C\s^{-4}(\|\t\|_\infty + \|\phi\|_\infty)^2   \|\ind \otimes \ind \|_F^2 \right)\]
on the right hand side of \eqref{eq:cent_vs_uncent_4}, and observe that when $\s$ is large enough -- that is, $\s \ge \s_0(L)$ for some threshold $\s_0(L)$, we have
\begin{equation} \label{eq:control_mean_term}
\left(\s^{-2} \|\ind\|_2^2 - 3\s^{-4}(\|\t\|_\infty + \|\phi\|_\infty)^2   \|\ind \otimes \ind \|_F^2 \right) \ge \frac{1}{4}\s^{-2} \|\ind\|_2^2.
\end{equation}

Combining \eqref{eq:cent_vs_uncent_4} and \eqref{eq:control_mean_term}, we obtain 
\begin{align*} 
& D_{KL}(\t || \phi) \\
\ge &  \s^{-4} \cdot \frac{3C}{4}\|\Del_2(\t,\phi)\|_F^2 + \s^{-2}  \cdot \frac{1}{4}|\bt - \bp|^2  \|\ind\|_2^2 \\
\ge &  \s^{-4} \cdot \frac{3C}{4} \|\Del_2(\t,\phi)\|_F^2.  \numberthis \label{eq:cent_vs_uncent_5}
\end{align*}

\end{proof}

\begin{rem}
We observe that equality can hold in \eqref{eq:cent_vs_uncent_5}, whenever $\bt=\bp$. This is indeed possible for specific directions of approach of $\phi$ to the signal $\t$ when $\t$ lies in the interior of the signal class. The standard signal classes considered in MRA, in this paper as well as otherwise, and also the generative models  considered in this paper, have their interiors account for their full Lebesgue measure, so nearly all signals $\t$ do in fact have such a bad direction of approach where equality in \eqref{eq:cent_vs_uncent_5} holds.
\end{rem}

We continue on to the proof of Proposition \ref{prop:quadratic_UB}.

\begin{proof}[Proof of Proposition \ref{prop:quadratic_UB}]
When, for some $\t,\phi$, we have $\|\Del_2(\t,\phi)\|_F \le c \rho(\t,\phi)^2$, then we may proceed to analyse the order of $D_{KL}(\t \| \phi)$ as follows. Combining Lemma \ref{lem:BRW2} and Theorem \ref{thm:BRW2} applied with $k=3$, and noting that $\Delta_1(\tlt \| \tlp)=0$, we may proceed as
\begin{align*}
& D_{KL}(\t || \phi) \\
= &  D_{KL}(p_{\tilde{\theta}} || p_{\tilde{\phi}}) + \frac{1}{2\s^2} \|\Delta_1(\t,\phi)\|^2 \\
\le & 2 \sum_{m=1}^{2} \frac{\|\Delta_m(\tlt , \tlp)\|^2}{\sigma^{2m}m!} + C \frac{\|\tlt\|_2^{4}\rho(\tlt,\tlp)^2}{\sigma^{6}} +  \frac{1}{2\s^2} \|\Delta_1(\t,\phi)\|^2 \\
= &  \frac{1}{2\s^2} \cdot |\bt-\bp|^2 \|\ind \|_2^2 + \frac{1}{\s^4} \cdot  \|\Del_2(\tlt , \tlp)\|_F^2 + C \frac{\|\tlt\|_2^{4}\rho(\tlt,\tlp)^2}{\sigma^{6}}  \quad \text{[using \eqref{eq:delta1}]} \\
= &  \frac{1}{2\s^2} \cdot |\bt-\bp|^2 \|\ind \|_2^2 \left(1 + \frac{C_1\|\t\|_2^4}{\s^4} \right) + \frac{1}{\s^4} \cdot  \|\Del_2(\tlt , \tlp)\|_F^2 + \frac{2C \|\t\|_2^{4}}{\sigma^{6}}\rho(\t,\phi)^2, \numberthis \label{eq:degenerate}
\end{align*}
where, in the last step, we have used Proposition \ref{prop:distance_compare}.

Therefore, if $\t,\phi$ are such that $\bt=\bp$ and $\|\Del_2(\t,\phi)\|_F \le c \rho(\t,\phi)^2$,  we may conclude from \eqref{eq:degenerate} that 
\[ D_{KL}(\t || \phi)  \le \frac{1}{\s^4} \cdot C \cdot \rho(\t,\phi)^4 +  2C \frac{\|\t\|_2^4}{\sigma^6} \cdot \rho(\t,\phi)^2.\]
For small enough $\rho(\t,\phi)$, the quadratic term involving $\rho(\t,\phi)^2$ dominates in the above,
and we have
\begin{equation} \label{eq:degenerate_upperbound}
 D_{KL}(\t || \phi)  \le 4C \frac{\|\t\|_2^4}{\sigma^6} \cdot \rho(\t,\phi)^2
\end{equation}
for some positive number $C$ and small enough $\rho(\t,\phi)$.
\end{proof}



We complete this section with the auxiliary Propositions \ref{prop:real_numbers} and \ref{prop:distance_compare}.

\begin{prop} \label{prop:real_numbers}
Let $a,b>0$. Then we have
\[ (a-b)^2 \ge \frac{3}{4}a^2 - 3b^2. \]
\end{prop}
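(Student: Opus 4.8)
The plan is to reduce the claim to the nonnegativity of a perfect square, so that no appeal to the positivity of $a,b$ is actually needed. First I would transpose every term to one side, so that the asserted inequality $(a-b)^2 \ge \frac{3}{4}a^2 - 3b^2$ becomes equivalent to
\[
(a-b)^2 - \frac{3}{4}a^2 + 3b^2 \ge 0.
\]
Expanding $(a-b)^2 = a^2 - 2ab + b^2$ and collecting like terms, the left-hand side simplifies to $\frac{1}{4}a^2 - 2ab + 4b^2$, so the task is precisely to show that this quadratic form in $(a,b)$ is nonnegative.

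Next I would simply recognize the resulting expression as a perfect square, namely
\[
\frac{1}{4}a^2 - 2ab + 4b^2 = \left( \frac{1}{2}a - 2b \right)^2 \ge 0,
\]
which closes the argument at once. Equivalently, one can read this off from Young's inequality applied to the cross term: $2ab \le \frac{1}{4}a^2 + 4b^2$, whence $(a-b)^2 = a^2 - 2ab + b^2 \ge a^2 - \frac{1}{4}a^2 - 4b^2 + b^2 = \frac{3}{4}a^2 - 3b^2$. Both routes are immediate.

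There is no genuine obstacle here; the statement is a one-line elementary inequality, invoked earlier only to absorb the cross term in the lower bound \eqref{eq:cent_vs_uncent_4}. It is worth noting in passing that the hypothesis $a,b>0$ is not used anywhere — the inequality in fact holds for all real $a,b$ — and that equality is attained exactly when $a = 4b$, which pins down the worst-case constants.
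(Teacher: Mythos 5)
Your proof is correct and coincides with the paper's: the paper proves the proposition exactly via the weighted AM--GM bound $2ab = 2\cdot\frac{1}{2}a\cdot 2b \le \frac{1}{4}a^2 + 4b^2$, which is your second route, and your perfect-square route $\left(\frac{1}{2}a-2b\right)^2\ge 0$ is just that same inequality unpacked. Your side remarks that the hypothesis $a,b>0$ is superfluous and that equality holds exactly at $a=4b$ are accurate, though the paper does not state them.
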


\begin{proof}
 We observe that
\begin{equation}  \label{eq:balance}
2ab = 2 \cdot \frac{1}{2}a \cdot 2b \le \frac{1}{4}a^2 + 4b^2. 
\end{equation}
We may then expand $(a-b)^2 = a^2+b^2-2ab$ and use \eqref{eq:balance} to lower bound the $-2ab$ term. This completes the proof.
\end{proof}

\begin{prop} \label{prop:distance_compare}
We have, 
\[ \rho(\tlt,\tlp)^2 \le 2 \rho(\t,\phi)^2 + 2|\bt - \bp|^2 \|\ind\|_2^2.  \]
\end{prop}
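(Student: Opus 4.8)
The plan is to reduce everything to the elementary vector inequality $\|a-b\|_2^2 \le 2\|a\|_2^2 + 2\|b\|_2^2$, after exploiting the fact that the centering operation interacts trivially with the group action. Recall from the surrounding discussion that $\tlt = \t - \t^* = \t - \bt\,\ind$ and $\tlp = \phi - \bp\,\ind$, where $\ind$ is the all-ones vector. The first thing I would record is that $\ind$ is fixed by every element of $\cG$: since $\cG$ acts by cyclic shifts, $G\ind = \ind$ for all $G \in \cG$. This is the only structural input needed, and it is what makes the argument clean.

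With that in hand, for an arbitrary $G \in \cG$ I would compute
\[
\tlt - G\tlp = (\t - \bt\,\ind) - G(\phi - \bp\,\ind) = (\t - G\phi) - (\bt - \bp)\,\ind,
\]
using $G\ind = \ind$ in the last step. Applying the bound $\|x - y\|_2^2 \le 2\|x\|_2^2 + 2\|y\|_2^2$ with $x = \t - G\phi$ and $y = (\bt-\bp)\ind$ then yields
\[
\|\tlt - G\tlp\|_2^2 \le 2\|\t - G\phi\|_2^2 + 2|\bt-\bp|^2\,\|\ind\|_2^2.
\]

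To finish, I would let $G^\ast \in \cG$ be the shift achieving the minimum in the definition of $\rho(\t,\phi)$, so that $\|\t - G^\ast\phi\|_2 = \rho(\t,\phi)$. Since $\rho(\tlt,\tlp) = \min_{G} \|\tlt - G\tlp\|_2 \le \|\tlt - G^\ast\tlp\|_2$, specializing the displayed inequality to $G = G^\ast$ gives
\[
\rho(\tlt,\tlp)^2 \le 2\rho(\t,\phi)^2 + 2|\bt-\bp|^2\,\|\ind\|_2^2,
\]
which is exactly the claim. There is essentially no obstacle here; the only point requiring any care is the interchange of the minimizing group element, i.e.\ noting that the optimal alignment for the \emph{uncentered} pair $(\t,\phi)$ is a valid (if not necessarily optimal) alignment for the \emph{centered} pair, which is all that an upper bound on $\rho(\tlt,\tlp)$ requires.
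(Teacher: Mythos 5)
Your proof is correct and follows essentially the same route as the paper: the same decomposition $\tlt - G\tlp = (\t - G\phi) - (\bt-\bp)\ind$ (the paper phrases the $\cG$-invariance via $\t^* = \E_\cG[G\t] = \bt\,\ind$ rather than via $G\ind = \ind$, but these are the same fact), the same elementary inequality $\|x-y\|_2^2 \le 2\|x\|_2^2 + 2\|y\|_2^2$, and the same final step of bounding $\rho(\tlt,\tlp)$ by evaluating at the alignment that is optimal for the uncentered pair.
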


\begin{proof}
Recall that $\t^*=\E_\cG[G\t]$ is $\cG$-invariant, i.e., $G \t^*=\t^* \forall G \in \cG$; the same holds true for $\phi^*$.
For any $G \in \cG$, we may write
\begin{align*}
& \|\tlt - G \tlp\|_2^2 \\
= &  \|(\t-\t^*) - G (\phi - \phi^*)  \|_2^2 \\
= &  \|(\t - G \phi)  - (\t^*-\phi^*) \|_2^2 \quad \text{[Since $\t^*, \phi^*$ are $\cG$-invariant]} \\
\le & 2\|\t - G \phi \|_2^2 + 2 \| \t^*-\phi^* \|_2^2. \\
\end{align*}
We summarize the above computations as
\begin{equation} 
\|\tlt - G \tlp\|_2^2 \le 2\|\t - G \phi\|_2^2 + 2 \| \t^*-\phi^* \|_2^2 \quad \forall G \in \cG.
\end{equation}
This implies that, for any $G \in \cG$ we have
\begin{equation} \label{eq:summarize_dist_compare}
\rho(\tlt,\tlp)^2= \min_{\mathfrak{g} \in \cG}\|\tlt - \mathfrak{g} \tlp \|_2^2 \le  \|\tlt - G \tlp\|_2^2 \le 2\|\t - G \phi\|_2^2 + 2 \| \t^*-\phi^* \|_2^2.
\end{equation}
Taking minimum over $G \in \cG$ on the right hand side of \eqref{eq:summarize_dist_compare} and noting that $\t^*=\bt \ind$ (and similarly for $\phi^*$), we obtain 
\[\rho(\tlt,\tlp)^2 \le 2 \rho(\t,\phi)^2 + 2|\bt - \bp|^2 \|\ind\|_2^2,\]
as desired.
\end{proof}

\subsection{$L,s$ dependence of estimation rates}
It may be noted that, in the context of Proposition \ref{prop:CurveLB-normality} part (i),  if we have additional information on the dependence of $K_1(\sigma)$ on $L$ and/or $s$, then we can have informative asymptotic asymptotic upper bounds on $\varrho(\tlt_n,\t)$ vis-a-vis its dependence on $L$ and/or $s$.

\begin{proof}[Proof of Theorem \ref{cor:dependence}]
We begin by recalling that by the $\cG$-invariance of  $D_{KL}(p_\t \| p_\phi)$,  for $\varrho(\t,\t_0)$ (equivalently, $\|\t-\t_0\|_2$) small enough, we may assume without loss of generality that $\varrho(\t,\t_0)= \frac{1}{\sqrt{L}}\|\t-\t_0\|_2$. Since $\|\tlt_n-\t_0\|_2 \to 0$ as $n \to \infty$, This will be true for $\varrho(\tlt_n,\t_0)$ with high probability.  

\textit{The dilute regime.} \quad
In view of  Lemma \ref{lem:lbound-beltway} and Proposition \ref{prop:linear_LB}, we obtain a local curvature estimate on $D_{KL}(p_\t\|p_{\t_0})$ as 
\[D_{KL}(p_\t\|p_{\t_0}) \ge C \cdot \frac{s}{L \sigma^4}\cdot \|\t-\t_0\|_2^2.\]  
Thus, we are in the setting of Proposition \ref{prop:CurveLB-normality} part (i) with $K_1(\sigma)=C \cdot \frac{s}{L \sigma^4}$. In view of \eqref{eq:asymptotics-1} and \eqref{eq:asymptotics-2},   we conclude that the limiting distribution of  $\sqrt{n}\varrho(\tlt,\t_0)/\sigma^2$ is stochastically dominated by a $C_1 \sqrt{\chi^2(s)/s}$ random variable, for a constant $C_1>0$. We conclude by noting that the latter random variable is $O_p(1)$.

\textit{The regime of moderate sparsity.} \quad
In the case of generic sparse symmetric signals, it may be seen from \eqref{eq:curv_LB_moderate} and Remark \ref{rem:L-free_LB} that if the support typically $s$-sparse and is $s^\tau$ cosine-generic, then for $\varrho(\t,\t_0)$ small we have \[\|\Delta_2(\t,\t_0)\|_F \ge \frac{s^{\tau-4}}{\sqrt{L}}\|\t-\t_0\|_2.\] Hence, by Theorem \ref{thm:BRW2}, for such signals we have \[D_{KL}(p_\t\|p_{\t_0}) \ge C \cdot \frac{s^{2(\tau-4)}}{L \sigma^4}\cdot \|\t-\t_0\|_2^2.\] Once again, this places us in the context of  Proposition \ref{prop:CurveLB-normality} part (i), with \[K_1(\sigma)=C \cdot \frac{s^{2(\tau-4)}}{L \sigma^4}.\] 
Furthermore,  in view of \eqref{eq:asymptotics-1} and \eqref{eq:asymptotics-2}, we may conclude that the  limiting distribution of  $\sqrt{n}\varrho(\tlt,\t_0)/\sigma^2$ is stochastically dominated by a $C_2  s^{4-\tau}\sqrt{\chi^2(s)}$ random variable, for a constant $C_2>0$. The latter random variable is $O_p(s^{4.5 - \tau})$.

We finally observe that two significant examples of generic sparse symmetric signals -- namely,  the Bernoulli-Gaussian distribution and the $N^{\mathrm{symm}}_{[-s,s]}(0,\zeta^2I)$ have supports that are typically $s$-sparse and constant times $s$-cosine generic. We provide the details in the case of the Bernoulli-Gaussian; the case of the  $N^{\mathrm{symm}}_{[-s,s]}(0,\zeta^2I)$ is similar. We invoke Lemma \ref{lem:cosgen-symBer} to conclude that the symmetric Bernoulli-Gaussian distribution with sparsity $s$ and variance $\zeta^2$ is typically $s$-sparse (with sparsity constants $(1/2,2)$) and $s/32$ cosine generic.  The analogous Lemma to be applied for the $N^{\mathrm{symm}}_{[-s,s]}(0,\zeta^2I)$ distribution is Lemma \ref{lem:cosgen-symdet}.

Thus, for these two signal distributions, $\tau=1$ in these settings in the context of the discussion immediately above.

In view of this fact, and the discussion above, the Bernoulli-Gaussian signal ensemble and the $N^{\mathrm{symm}}_{[-s,s]}(0,\zeta^2I)$ entail estimation rates that, upon scaling by $\sigma^2/\sqrt{n}$, are $O_p(s^{3.5})$. 
\end{proof}

\section*{Acknowledgements} SG was supported in part by the MOE grants  R-146-000-250-133, R-146-000-312-114 and MOE-T2EP20121-0013. PR was supported by the NSF awards DMS-1712596,  IIS-1838071, DMS-2022448, and DMS-210637. The authors would like to thank Victor-Emmanuel Brunel for stimulating discussions that shaped the direction of this project, and Michel Goemans for pointing them to the partial digest problem. The authors are grateful to the anonymous referees for their meticulous reading of the manuscript and their prescient suggestions towards its improvement, and especially for pointing out important connections of the present work to the problem of crystallographic phase retrieval.



\newpage

\appendix 

\begin{center}
{\Large   \textbf{Appendix}}
\end{center} 
 
\section{Appendix : additional notations} \label{a:notn}
\begin{defi} \label{def:thetap}
Let $\{X_n\}_{n \ge 1}$ be a sequence of non-negative random variables, and $\{a_n\}_{n \ge 1}$ is a sequence of positive numbers (deterministic or random). Then:
\begin{itemize}
\item By the statement $X_n=O_p(a_n)$ we mean that, for every $\eps>0$, there exists  $0<C(\eps)<\infty$ such that \[ \liminf_{n \to \infty}\P \left[ X_n/a_n \le C(\eps) \right] \ge 1-\eps. \]
\item By the statement $X_n=\Omega_p(a_n)$ we mean that, for every $\eps>0$, there exists  $0<c(\eps)<\infty$ such that \[ \liminf_{n \to \infty}\P \left[ X_n/a_n \ge c(\eps) \right] \ge 1-\eps. \]
\item By the statement $X_n=\Theta_p(a_n)$ we mean that for every $\eps>0$, there exist $0<c(\eps)<C(\eps)<\infty$ such that \[\liminf_{n \to \infty}\P \left[ c(\eps) \le X_n/a_n \le C(\eps) \right] \ge 1-\eps.\]
\end{itemize}
\end{defi}
Further,  $\|\cdot\|_F$ will denote the Frobenius norm of a matrix, and the expectation $\E_G$ will be taken with respect to $G$ chosen uniformly from the group of isometries $\bG$. 

For any positive integer $m$, by the symbol $[m]$ we denote the set $\{1,\cdots,m\}$.

For two sequences of positive numbers $(a_k)_{k>0}$ and $(b_k)_{k>0}$, we write $a_k \ll b_k$ when we have $b_k/a_k \to \infty$ as $k \to \infty$. 

A sequence of events $\{E_m\}_{m \ge 1}$, defined with respect to probability measures $\P_m$, is said to occur with high probability if $\P_m[E_m] \to 1$ as $m \to \infty$.

For any $\t  = (\t_1,\ldots,\t_L) \in \R^L$, we denote $\bt=\frac{1}{L}\sum_{i=1}^L \t_i$.

\section{Appendix: Bernoulli-Gaussian distributions} \label{a:BG}
We define the notion of the Bernoulli-Gaussian distribution, and the symmetric version thereof. For that, we first define the notion of a Gaussian distribution indexed by a subset of $\Z_L$.

\begin{defi}[Subset-indexed Gaussian distributions] \label{def:Normal-subset}
Let $A \subset \Z_L$, $\mu:\Z_L \to \R$ a function supported on $A$ and $\Sigma$ be a positive definite $|A|\times |A|$ matrix. Then the Gaussian distribution indexed by $A$ with mean $\mu$ and covariance $\Sigma$, denoted $N_A(0,\Sigma)$, is the random vector $(\eta_k)_{k \in \Z_L}$, with $\eta_k=0$ for $k \in A^\c$, and $(\eta_k)_{k \in A}$ is the $|A|$-dimensional Gaussian random vector with mean $\mu$ and covariance $\Sigma$.
\end{defi}

This allows us to define the Bernoulli-Gaussian distribution, a key property of which is that the support is chosen at random according to a Bernoulli sampling scheme.
\begin{defi}[Bernoulli-Gaussian distribution] \label{def:BG}
Let $s \in [L]$  and $\Xi \subset \Z_L$ be a random subset obtained by selecting each member of $\Z_L$ independently with probability $s/L$.
The Bernoulli-Gaussian  distribution on $Z_L$ with variance $\zeta^2$ and sparsity $s$ is then defined as the Gaussian distribution indexed by $\Xi$ with mean $\mathbf{0}$ and covariance $\zeta^2 I$; in other words the random variable $N_\Xi(\mathbf{0},\zeta^2 I)$,  with the Gaussian entries being statistically independent of the support $\Xi$.
\end{defi}

Next, we introduce the concept of a standard symmetric Gaussian random variable indexed by a subset of $\Z_L$. To introduce the notion of a symmetric signal, we first recall the notion of the \textit{standard parametrization} of $\Z_L$ \eqref{eq:std-par}. 

We are now ready to define
\begin{defi}[Symmetric subset-indexed Gaussian distributions] \label{def:symm-Normal-subset}
Let $\Z_L$ be in the standard enumeration \eqref{eq:std-par},  let  $A \subset \Z_L$ be symmetric, i.e. $A = -A$ and let $\rho>0$. Let $A_+:=\{0,\ldots,\lfloor(L-1)/2 \rfloor\} \cap A$, and let $(X_k)_{k \in \Z_L}$ denote the random variable $N_{A_+}(0,\zeta^2 I)$. Then the  symmetric Gaussian distribution indexed by $A$ with mean $0$ and variance $\zeta^2$, denoted $N_A^{\symm}(0,\zeta^2 I)$, is the random vector $(\eta_k)_{k \in \Z_L}$ with $\eta_k=X_{|k|}$.
\end{defi}

Finally, all of the above taken together allows us to define 
\begin{defi}[Symmetric Bernoulli-Gaussian distribution] \label{def:symm-BG}
Let $\Z_L$ be in the standard enumeration \eqref{eq:std-par}. Let $\Xi_0 \subset \Z_L^+=\{0,\ldots,\lfloor(L-1)/2\rfloor \}$ be a random subset obtained by selecting each member of $\Z_L^+$ independently with probability $s/L$, and consider the symmetric subset $\Xi:=\Xi_0 \cup (-\Xi_0)$. Then the  symmetric Bernoulli-Gaussian distribution with mean zero, variance $\zeta^2$ and sparsity parameter $s$ is the distribution $N^{\symm}_\Xi(0,\zeta^2 I)$, with the Gaussian entries being statistically independent of the support $\Xi$.
\end{defi}
Heuristically, the symmetric Bernoulli-Gaussian distribution is obtained by taking a Bernoulli-Gaussian random variable on the positive part of $\Z_L$ and extending it to all of $\Z_L$ by making it symmetric about the origin.

\section{Appendix: Generic sparse signals} \label{a:gen_signal}

We  introduce the notions of signal support sets  that are \textit{typically $s$-sparse} and $\fb$-cosine generic.

\begin{defi} \label{def:typical-sparse}
Let $\a,\b>0$ be fixed numbers and $s \in [L]$ be a parameter that possibly depends on $L$. A probability distribution over subsets $\Xi \subset \Z_L$   is said to be \textit{typically $s$-sparse} with sparsity constants $(\a,\b)$ if we have  $\a \cdot s \le |\Xi| \le \b \cdot s$ with probability $1-o_L(1)$.
\end{defi}

To introduce the concept of cosine-genericity of a set, we first define the cosine functional of a set $\Xi \subset \Z_L$ for an element $a \in \Z_L$.

For $\Xi \subset \Z_L$ and $a \in \Z_L$, define
\begin{equation} \label{eq:def-cosine}
\cC(\Xi,a)=\mathbbm{1}_{\{0 \in \Xi\}} + 2 \sum_{k \in \Xi \setminus \{0\}} \cos^2(2\pi a k/L),
\end{equation}
where $\mathbbm{1}_A$
denotes the indicator function of the event $A$.

Then we are ready to introduce
\begin{defi} \label{def:cosgen}
Let $\fb>0$  be a parameter, possibly depending on $L$. A probability distribution over subsets $\Xi \subset \Z_L$   is said to be \textit{$\fb$-cosine generic} if, with probability $1-o_L(1)$, we have $\min_{a \in \Z_L} \cC(\Xi,a) \ge \fb (1-o_L(1))$. 
\end{defi}
Equivalently, we say that the random variable $\Xi$ is cosine generic with parameter $\fb$. Cosine genericity of a (random) set is a condition that aims to ensure that, with high probability, the set under consideration is sufficiently generic, in the sense that there are no specialised algebraic or arithmetic relations satisfied by the elements of the set which would make $\min_{a \in \Z_L} \cC(\Xi,a)$ small.

Putting all of the above together, we may introduce the generic $s$-sparse symmetric signals.
\begin{defi}\label{def:generic_symm_signal}
Let $s \in [L]$ be a parameter, possibly depending on $L$, and $\a,\b,\zeta,\tau>0$ be fixed. We call a random signal $\t:\Z_L \to \R$ to be a generic $s$-sparse symmetric signal with dispersion $\zeta^2$, sparsity constants $\a,\b$ and index $\tau$ if the following hold:
\begin{itemize}
\item The support $\Xi$ of $\t$ is typically $s$-sparse with sparsity constants $(\a,\b)$and $s^\tau$-cosine generic.
\item $\t \sim N_{\Xi}^{\symm}(0,\zeta^2 I)$, with the non-zero entries of $\t$ being statistically independent of $\Xi$.
\end{itemize}
\end{defi}

\section{Appendix: On the size of collision free sets} \label{a:coll_free} 

In this section, we provide detailed arguments for the assertions that the size of a collision-free subset  $A \subset \Z_L$ is maximally $O(L^{1/2})$ and typically $O(L^{1/3})$. 

To this end, we let $1 \le k \le L$, and we consider a subset $B \subset \Z_L$ of size $|B|=k$. If $B$ is collision-free, then $B$ entails $k(k-1)$ distinct differences between its points; we call this set of differences $D$. For $x \in \Z_L \setminus B$, we want to understand size restrictions on $|B|$ that enable $B \cup \{x\}$ to be a collision-free set. If $B \cup \{x\}$ has to be collision-free, we note that for any fixed $u \in B$, the difference $x-u$ needs to be $\notin D$. This rules out $k(k-1)$ choices for $x$. Thus, such a point $x$ can be found only if  $k(k-1) < L - k$, which gives us an upper bound of $k=O(L^{1/2})$, as desired.

We note in passing that the probability of a randomly selected $x$ in the above setting to yield a collision-free subset $B \cup \{x\}$ is bounded above by $(L-k-k(k-1))/L$, for \textit{any} set $B$.

Now we examine the largest value of $m$ for which a random subset drawn of size $m$ drawn from $\Z_L$ collision free with positive probability. For concreteness, we consider $m$ samples without replacement from $\Z_L$. 


For $1\le k \le m$, we denote by $\fS_k$ the set of first $k$ random samples without replacement. Then we may write
\begin{align*}
& \P[\fS_m ~\text{is collision-free}] \\
= & \P[\fS_m ~\text{is collision-free} ~| ~\fS_{m-1} ~\text{is collision-free}] \cdot \P[\fS_{m-1} ~\text{is collision-free}] \\
= & \prod_{k=1}^{m-1} \P[\fS_{k+1} ~\text{is collision-free} ~| ~\fS_{k} ~\text{is collision-free}] \\
= & \prod_{k=1}^{m-1} \P_{x \sim \text{Unif}(\Z_L \setminus \fS_{k})} \big[\fS_{k} \cup \{x\} ~\text{is collision-free} ~| ~\fS_{k} ~\text{is collision-free}\big] \\
\le & \prod_{k=1}^{m-1} \frac{L-k-k(k-1)}{L}  \quad  \text{[using the analysis for the set $B$ above]} \\
= & \prod_{k=1}^{m-1}  \left(1 - \frac{k^2}{L} \right) 
~\le   \prod_{k=1}^{m-1} \exp(-\frac{k^2}{L}) 
~\le  \exp(-c m^3/L).
\end{align*}

Thus, if $m^3/L \to \infty,  \P[\fS_k ~\text{is collision-free}] \to 0$. Therefore, for a random subset of size $m$ to be collision-free with positive probability, we must have $m=O(L^{1/3})$, and to have the same property with high probability, we must have $m=o(L^{1/3})$.


\bibliographystyle{aomalpha}

\bibliography{main}

\end{document}